\numberwithin{equation}{section}
\newtheoremstyle{note}
{1em}
{1em}
{}
{}
{\bfseries}
{: }
{.5em}
{}
\newtheorem{theorem}{Theorem}[section]
\newtheorem{lemma}[theorem]{Lemma}
\newtheorem{proposition}[theorem]{Proposition}
\newtheorem{corollary}[theorem]{Corollary}
\theoremstyle{note}
\newtheorem{remark}[theorem]{Remark}
\newtheorem{definition}[theorem]{Definition}
\newtheorem*{acknowledgments}{Acknowledgments}
\newtheorem{claim}[theorem]{Claim}
\newcommand{\N}{{\mathbb{N}}}
\newcommand{\R}{{\mathbb{R}}}
\newcommand{\C}{{\mathbb{C}}}
\newcommand{\tn}[1]{{\left\vert\kern-0.25ex\left\vert\kern-0.25ex\left\vert #1 
    \right\vert\kern-0.25ex\right\vert\kern-0.25ex\right\vert}}
\newcommand{\OH}{\mathrm{OH}}
\newcommand{\eps}{\varepsilon}
\newcommand{\M}{{\mathrm{M}}}
\newcommand{\cU}{{\mathcal{U}}}
\newcommand{\cB}{{\mathcal{B}}}
\newcommand{\cH}{{\mathcal{H}}}
\newcommand{\cZ}{{\mathcal{Z}}}
\newcommand{\cL}{{\mathcal{L}}}
\DeclareMathOperator{\cb}{cb}
\DeclareMathOperator{\MIN}{MIN}
\title[Coarse geometry of operator spaces]{Towards a theory of coarse geometry of operator spaces}
\author[B. M. Braga]{Bruno M. Braga}
\address[B. M. Braga]{University of Virginia, $141$ Cabell Drive, Kerchof Hall, P.O. Box $400137$, Charlottesville, USA}
\email{demendoncabraga@gmail.com}
\urladdr{https: //sites.google.com/site/demendoncabraga/}
\subjclass[2010]{Primary:  47L25, 46L07; Secondary:  46B80} 
\begin{document}
\maketitle

\begin{abstract}
We introduce two notions of coarse embeddability between operator spaces:    \emph{almost complete coarse embeddability of bounded subsets} and \emph{spherically-complete coarse embeddability}.  We provide examples showing that these notions are strictly weaker than complete isomorphic embeddability --- in fact, they do not even imply isomorphic embeddability. On the other hand, we show that, despite their nonlinearity,  the existence of such  embeddings provides restrictions on the linear operator space structures of the spaces. Examples of nonlinear \emph{equivalences} between operator spaces are also provided.
\end{abstract}


\section{Introduction}\label{SectionIntro}

This paper continues the investigation of the nonlinear theory of operator spaces which was initiated in \cite{BragaChavezDominguez2020PAMS} and continued in \cite{BragaChavezDominguezSinclair}.   \emph{Operator spaces} are Banach subspaces of the space of bounded operators on some (complex) Hilbert space $H$, denoted by $\cB(H)$; so operator space theory is often regarded as noncommutative Banach space theory. The difference between the category of Banach spaces and the one of operator spaces does not lie in their objects, but   in the morphisms between them: for  operator spaces, one must consider   \emph{completely bounded operators}. Recall, given a set $X$ and $n\in\N$, $\M_n(X)$ denotes the set of $n$-by-$n$ matrices with entries in $X$ and, if  $f\colon X\to Y$ is a map into another set $Y$,   then the  \emph{$n$-th amplification of $f$} is the map $f_n\colon \M_n(X)\to \M_n(Y)$ given by 
\[f\big([x_{ij}]_{i,j=1}^n\big)=[f(x_{ij})]_{i,j=1}^n\]
for all $[x_{ij}]_{i,j=1}^n\in \M_n(X)$. If both $X$ and $Y$ are operator spaces, then each of $\M_n(X)$ and $\M_n(Y)$ comes equipped with a norm given by the canonical isomorphism   $\M_n(\cB(H))\cong \cB(H^{\oplus n})$. Therefore, if $f$ is linear, so is  each amplification $f_n$   and its operator norm is denoted by $\|f_n\|_n$. The \emph{complete bounded norm} of $f$ is then defined as 
\[\|f\|_{\cb}=\sup_{n\in\N}\|f_n\|_n.\] 
The linear map  $f$ is   called \emph{completely bounded} if $\|f\|_{\cb}<\infty$ and a \emph{complete isomorphic embedding} if both $f$ and $f^{-1}\colon f(X)\to X$ are completely bounded; if, furthermore, $f(X)=Y$, $f$ is a \emph{complete isomorphism}.

The first step  towards developing an ``interesting'' theory of nonlinear geometry of operator spaces is
to identify nontrivial types of nonlinear morphisms between operator spaces. We start by recalling that the most naive approach does not work. A map $f\colon X\to Y$ between operator spaces is said to be  \emph{completely coarse} if the sequence of amplifications $(f_n\colon \M_n(X)\to \M_n(Y))_n$ is \emph{equi-coarse}, i.e., if for all $r>0$ there is $s>0$ such  that 
\[\big\|[x_{ij}]_{i,j=1}^n-[z_{ij}]_{i,j=1}^n\big\|_{\M_n(X)}\leq r\Rightarrow \big\|f\big([x_{ij}]_{i,j=1}^n\big)-f\big([z_{ij}]_{i,j=1}^n\big)\big\|_{\M_n(Y)}\leq s\]
for all $n\in\N$ and all $[x_{ij}]_{i,j=1}^n, [z_{ij}]_{i,j=1}^n\in \M_n(X)$ --- for $n=1$, this is precisely the definition of  a \emph{coarse map} between Banach spaces.  As shown in \cite{BragaChavezDominguez2020PAMS}, the study of completely coarse maps between operator spaces does not lead to an interesting theory. Precisely:

  \begin{theorem}\emph{(}\cite[Theorem 1.1]{BragaChavezDominguez2020PAMS}\emph{)}
Let $X$ and $Y$ be  operator spaces, and let $f\colon  X \to Y$ be
completely coarse. If $f(0)= 0$, then $f$ is $\R$-linear.
\label{ThmCompleteCoarseIsRLinear}
\end{theorem}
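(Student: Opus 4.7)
My plan is to derive $\R$-linearity by establishing additivity first and then extending to full $\R$-homogeneity via continuity. The pivotal input is a single matrix computation that extracts a strong quantitative consequence of complete coarseness. For each $x \in X$ and $n \in \N$, consider the matrix $M_n(x) := \tfrac{1}{n} J_n \otimes x \in \M_n(X)$, where $J_n$ denotes the $n \times n$ all-ones matrix. Since $\tfrac{1}{n} J_n$ is a rank-one orthogonal projection on $\C^n$ of norm $1$, one has $\|M_n(x)\|_{\M_n(X)} = \|x\|$. Since $f_n$ acts entrywise, $f_n(M_n(x)) = J_n \otimes f(x/n)$, which has norm $n\,\|f(x/n)\|$. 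Equi-coarseness applied to $(M_n(x), 0)$, together with $f_n(0)=0$, therefore yields $n\,\|f(x/n)\| \leq s(\|x\|)$ for every $x \in X$ and every $n \in \N$, where $s$ is the coarseness modulus. Rescaling gives $\|f(y)\| \leq s(1)\|y\|$ for $\|y\|\leq 1$, so $f$ is Lipschitz in a neighborhood of $0$.

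The heart of the argument is additivity: $f(u+v) = f(u) + f(v)$ for all $u, v \in X$. I would proceed by contradiction. Assuming $z := f(u+v) - f(u) - f(v) \neq 0$, I would construct matrices $A_n, B_n \in \M_{k(n)}(X)$ (with $k(n) \to \infty$) whose entries are built from appropriately scaled copies of $u$, $v$, and $u+v$, combining diagonal components with $J$-type rank-one components. The construction is designed so that $\|A_n - B_n\|$ stays uniformly bounded in $n$, yet applying $f_{k(n)}(A_n) - f_{k(n)}(B_n)$ to a distinguished unit vector in $\C^{k(n)}$, such as $\tfrac{1}{\sqrt{k(n)}}(1, \dots, 1)^{T}$, produces a quantity of order $k(n)\cdot\|z\| + O(1)$. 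Equi-coarseness bounds the image difference by a constant, forcing $z=0$. A natural alternative is to first define $\phi(x) := \lim_n n f(x/n)$ (well-defined along a subsequence thanks to the uniform bound $\|n f(x/n)\|\leq s(\|x\|)$), prove that $\phi$ is linear by analogous matricial comparisons, and then conclude $f = \phi$ by comparing $M_n(x)$ with $e_{11}\otimes x$ in $\M_n(X)$.

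Once additivity is established, $\Q$-linearity is routine: $f(mx) = m f(x)$ for $m \in \N$ by induction, and $f(x/m) = f(x)/m$ from $m\,f(x/m) = f(x)$. The Lipschitz-at-$0$ estimate combined with additivity, via $f(x) - f(y) = f(x-y)$, propagates continuity from $0$ to all of $X$, and continuity plus $\Q$-linearity upgrades to $\R$-linearity. The main obstacle is the additivity step: identifying a matrix construction whose domain-side norm stays bounded in $n$ while the entrywise $f$-image genuinely amplifies the defect $z$ by a factor of $n$. Block-diagonal embeddings do not suffice, since their norms are simply the maxima of the block norms; the successful pattern must couple rows and columns coherently so that, if $z \neq 0$, the defect contributes $n$ times simultaneously under a suitably chosen test vector, contradicting the uniform bound from equi-coarseness.
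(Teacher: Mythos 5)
The paper itself does not prove this statement --- it is quoted from \cite{BragaChavezDominguez2020PAMS} --- so the comparison is with the proof given there. Your first step is correct and is genuinely the right kind of computation: comparing $\tfrac1n J_n\otimes x$ with $0$ (and with $\tfrac1n J_n\otimes y$) converts equi-coarseness of the amplifications into the quantitative bounds $n\|f(x/n)\|\leq\omega_{\bar f}(\|x\|)$ and $n\|f(x/n)-f(y/n)\|\leq\omega_{\bar f}(\|x-y\|)$, and the closing reduction (additive plus bounded near $0$ implies continuous, hence $\R$-linear) is standard.

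The gap is the additivity step, which you rightly call the heart of the argument but do not carry out --- and the design constraints you impose on the would-be construction pull against each other. If the test-vector pairing is to extract the scale-one defect $z=f(u+v)-f(u)-f(v)$ with total coefficient weight of order $k(n)$, then a positive proportion of the $k(n)^2$ entries of $A_n$ and $B_n$ must be unscaled copies of $u$, $v$, $u+v$, and then $\|A_n-B_n\|$ cannot stay bounded (already a $0/1$ pattern $E$ with $|E|$ nonzero entries has $\|E\|\geq|E|/N$); if instead you rescale the entries by $1/k(n)$ to tame the norm, the pairing only controls the defect of $f$ at scale $1/k(n)$, and you land on an \emph{approximate} midpoint identity with an additive constant --- which is also exactly where your alternative route via $\phi(x)=\lim_n nf(x/n)$ stalls (and note that the uniform bound gives no norm-convergent subsequence; one needs weak$^*$ or ultrafilter limits). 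The mechanism that actually closes the argument abandons boundedness of $\|A_N-B_N\|$: take a $\pm1$ sign pattern $H=[\epsilon_{ij}]\in\M_N$ of operator norm $\sqrt N$ (a Walsh--Hadamard matrix), and set $a_{ij}=u+v$, $b_{ij}=u$ where $\epsilon_{ij}=1$ and $a_{ij}=0$, $b_{ij}=v$ where $\epsilon_{ij}=-1$. Then $A-B=H\otimes v$ has norm $\sqrt N\,\|v\|$, while
\[
f_N(A)-f_N(B)=\tfrac12 J_N\otimes\big(f(u+v)-f(u)-f(v)\big)+\tfrac12 H\otimes\big(f(u+v)-f(u)+f(v)\big)
\]
has norm at least $\tfrac N2\|z\|-O(\sqrt N)$. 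Since $\omega_{\bar f}(t)\leq Lt+L$ (Remark \ref{RemarkAffineBound}), the coarse upper bound $L\sqrt N\|v\|+L$ is $o(N)$, and letting $N\to\infty$ forces $z=0$ exactly. It is this sublinear growth $\sqrt N$ on the domain side against linear growth $N$ on the range side --- not boundedness of $\|A_N-B_N\|$ --- that makes the contradiction work; without supplying a construction of this kind, your sketch does not yet prove the theorem.
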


One way to overcome this issue is, instead of considering a single function  with equi-coarse amplifications, to consider a sequence $(f^n\colon X\to Y)_n$ whose  amplifications $(f^n_n\colon \M_n(X)\to\M_n( Y))_n$ are equi-coarse --- i.e., each $f^n$ does not need to be completely coarse, but only to ``behave well'' until its $n$-th amplification. Consider the following \emph{linear} notion  (see  \cite[Definition 4.1]{BragaChavezDominguez2020PAMS}):   $X$ \emph{almost completely  isomorphically embeds into $Y$} if there is a   sequence $(f^n\colon X\to Y)_n$ of linear maps whose  amplifications $(f^n_n\colon \M_n(X)\to \M_n(Y))_n$ are \emph{equi-isomorphic embeddings}, i.e., there is $D\geq 1$ such that 
\[D^{-1}\big\|[x_{ij}]_{i,j=1}^n\big\|_{\M_n(X)}\leq \big\|f^n_n\big([x_{ij}]_{i,j=1}^n\big)\big\|_{\M_n(Y)}\leq D\big\|[x_{ij}]_{i,j=1}^n\big\|_{\M_n(X)}\]
for all  $n\in\N$ and all $[x_{ij}]_{i,j=1}^n\in \M_n(X)$. This   notion is  clearly weaker than complete isomorphic embeddability and, as shown in \cite[Theorem 4.2]{BragaChavezDominguez2020PAMS}, it is actually   \emph{strictly} weaker. 
  Moreover, this   has a natural nonlinearization: $X$ \emph{almost completely coarsely embeds into $Y$} if there is a   sequence $(f^n\colon X\to Y)_n$ whose amplifications $(f^n_n\colon \M_n(X)\to \M_n(Y))_n$ are \emph{equi-coarse embeddings}, i.e.,  $(f^n_n)_n$ is both equi-coarse and \emph{equi-expanding}:  for each $s>0$ there is $r>0$ such that \[\big\|[x_{ij}]_{i,j=1}^n-[z_{ij}]_{i,j=1}^n\big\|_{\M_n(X)}\geq  r \Rightarrow \big\|f\big([x_{ij}]_{i,j=1}^n\big)-f\big([z_{ij}]_{i,j=1}^n\big)\big\|_{\M_n(Y)}\geq  s\]
for all $n\in\N$ and all $[x_{ij}]_{i,j=1}^n, [z_{ij}]_{i,j=1}^n\in \M_n(X)$  (we refer to Subsection \ref{SubsectionCoarseNotions} for details on coarse geometry).

 As mentioend in \cite{BragaChavezDominguezSinclair},  for a nonlinear notion of embeddability between operator spaces to be relevant, at the bare minimum, the following must hold:
\begin{enumerate}[(I)]
\item\label{Item1} the nonlinear embedding  must   still be strong enough  to   recover some linear aspects of the operator space structures of the spaces, and 
\item\label{Item2}   the nonlinear embedding   must be strictly weaker than (almost) complete coarse embeddability.
\end{enumerate}
The results of \cite{BragaChavezDominguez2020PAMS,BragaChavezDominguezSinclair}   show that almost complete coarse  embeddability  satisfies both \eqref{Item1} and \eqref{Item2}.\footnote{Furthermore, \cite{BragaChavezDominguezSinclair} also   shows that almost complete \emph{Lipschitz} embeddability  satisfies both \eqref{Item1} and \eqref{Item2} above. }  As a sample example for \eqref{Item1}:  it was shown in  \cite[Theorem 1.2]{BragaChavezDominguez2020PAMS} that if an infinite dimensional  operator space $X$ almost completely coarsely embeds into G. Pisier's operator Hilbert space $\OH$, then $X$ must be  completely isomorphic to $\OH$.

As for  \eqref{Item2} above, although \cite[Theorem 1.4]{BragaChavezDominguezSinclair} provides  examples of operator spaces $X$ and $Y$ such that $X$ almost completely coarsely embeds into $Y$ but $X$ does  not almost completely  isomorphically  embed  into $Y$, the examples obtained therein  have some weaknesses:

\begin{enumerate}[(I)]\setcounter{enumi}{2}
\item\label{ItemI} the operator spaces $X$ and $Y$ are \emph{nonseparable}, and
\item\label{ItemII} the methods therein do not give  examples of nonlinear \emph{equivalences} between operator spaces, but only of \emph{embeddings}.
\end{enumerate}
In this paper, we propose an even weaker notion of nonlinear embeddability between operator spaces which also satisfies \eqref{Item1} and \eqref{Item2}, and, moreover,  provides  solutions to \eqref{ItemI}   and \eqref{ItemII}.

We now describe the main results of this paper. We start by  introducing the main kind of nonlinear embeddability  between operator spaces considered in this paper. Throughout, given a Banach space $X$, $B_X$ denotes its closed unit ball.

 \begin{definition} \label{DefiEmbIntro}
 Let $X$ and $Y$ be operator spaces.  We say  that the \emph{bounded subsets of $X$ almost completely coarsely embed into $Y$} if there is a sequence  of maps $(f^n\colon n\cdot B_X\to Y)_n$ such that the amplifications 
 \[\big(f^n_n\restriction_{ n\cdot B_{\M_n(X)}} \colon n\cdot B_{\M_n(X)}\to\M_n(Y)\big)_{n=1}^\infty\]
 are equi-coarse  embeddings.\footnote{Notice that $n\cdot B_{\M_n(X)}\subset \M_n(n\cdot B_X)$, so the restriction of each $f^n_n$ to $ n\cdot B_{\M_n(X)}$ is well defined.}
 \end{definition}

Almost complete coarse embeddability of bounded subsets is a transitive notion  for operator spaces. This notion is evidently  weaker than almost complete coarse embeddability. However, we show that the existence of such embeddings still captures some aspects of  the (linear) operator space structures of the spaces. Moreover, this can be seen even for Hilbertian  operator spaces --- recall, an operator space is \emph{Hilbertian} if it is isomorphic (as a Banach space) to a Hilbert space. 

Precisely, in order to state our first result, we recall the definition of the  row and column operator spaces. For each $i,j\in\N$,  $e_{i,j}$ denotes the operator in $\cB(\ell_2)$ whose $(i,j)$-coordinate is 1 and all others are 0. Then, the \emph{row operator space} and the \emph{column operator space} are defined by \[R=\overline{\mathrm{span}}\big\{e_{1,j}\mid j\in\N\big\}\text{ and }C=\overline{\mathrm{span}}\big\{e_{j,1}\mid j\in\N\big\},\]
  respectively.  Moreover, given $\theta\in [0,1]$, $(R,C)_\theta$ denotes the \emph{$\theta$-interpolation operator space of $R$ and $C$} (see Subsection \ref{SubsectionInterpolation} for details); so $(R,C)_\theta$  is also Hibertian. We prove  the following in Section \ref{SectionNotEmbedding}:

\begin{theorem}\label{ThmRandCnotEmbInterpolation}
Let $R$ and $C$ denote the column and row operator spaces, respectively, and let $\theta,\gamma \in [0,1]$ be distinct. Then, the bounded subsets of the interpolation operator space   $(R,C)_\theta$ do not almost  completely coarsely embed into  $(R,C)_\gamma$.
\end{theorem}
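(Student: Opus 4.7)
The plan is to exhibit concrete finite-dimensional witnesses in $\M_n((R,C)_\theta)$ whose operator-space norms grow at a $\theta$-dependent rate, extract from the putative sequence of coarse embeddings a uniform linear surrogate, and arrive at a forced equality of these rates across $\theta$ and $\gamma$. Let $(h_k)_k$ denote the canonical basis of the underlying Hilbert space. For each $n$, consider the row and column witnesses
\[
\rho_n = \sum_{k=1}^n e_{1,k}\otimes h_k\quad\text{and}\quad\kappa_n = \sum_{k=1}^n e_{k,1}\otimes h_k
\]
in $\M_n((R,C)_\theta)$. A direct computation at the endpoints $\theta \in \{0,1\}$, combined with the interpolation identity $\M_n((R,C)_\theta)=(\M_n(R),\M_n(C))_\theta$, gives $\|\rho_n\|=n^{(1-\theta)/2}$ and $\|\kappa_n\|=n^{\theta/2}$. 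Both elements belong to $n\cdot B_{\M_n((R,C)_\theta)}$ and thus lie in the domain of the amplification $f^n_n$.

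The second step is to linearize. Suppose for contradiction that $(f^n)_n$ is a sequence as in Definition \ref{DefiEmbIntro}. Because $(R,C)_\theta$ and $(R,C)_\gamma$ are Hilbertian at the Banach-space level, one can invoke the coarse rigidity of Hilbert spaces (via Mazur-map averaging, or a Kalton-Randrianarivony style ultralimit applied to the sequence of amplifications) to extract from $\bigl(f^n_n\restriction_{n\cdot B_{\M_n((R,C)_\theta)}}\bigr)_n$ a family of linear maps $T_n\colon \M_n((R,C)_\theta)\to \M_n((R,C)_\gamma)^{\cU}$ whose bi-Lipschitz constants are bounded by some $D$ independent of $n$; this parallels the linearization underlying \cite[Theorem 1.2]{BragaChavezDominguez2020PAMS}, adapted to the bounded-subsets hypothesis. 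Applying $T_n$ to $\rho_n$ and $\kappa_n$ produces elements whose operator-space norms are comparable up to $D^2$ to $n^{(1-\theta)/2}$ and $n^{\theta/2}$, while their Hilbert-Schmidt norms are preserved at roughly $\sqrt n$. In $\M_n((R,C)_\gamma)$, however, any element of Hilbert-Schmidt norm $\sqrt n$ obeys the operator-space bounds dictated by the $\gamma$-interpolation, the crux being Pisier's finite-dimensional estimate $\|\mathrm{id}\colon(R_n,C_n)_\theta\to(R_n,C_n)_\gamma\|_{\cb}=n^{|\theta-\gamma|}$. Comparing the two sides forces $n^{|\theta-\gamma|}\le D^{O(1)}$ for every $n$, a contradiction whenever $\theta\ne\gamma$.

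The main obstacle will be the linearization step: converting data that only provides coarse control on bounded subsets into linear embeddings that continue to detect the $\M_n$-structure uniformly in $n$. The scales interact delicately -- the witnesses have operator-space norm of order $\sqrt n$, well below the ball radius $n$ -- so ensuring that the putative equi-coarse embedding engages with the relevant intermediate scale requires carefully combining the Hilbertian rigidity of both source and target with the compatibility of coarse embeddings with the amplification structure. Once the linearization is in place, the endpoint and interpolation computations reduce the problem to a polynomial scaling obstruction that is straightforward to close.
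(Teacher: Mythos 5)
Your outline founders on the linearization step, and that step cannot be repaired in the way you suggest. The ``coarse rigidity of Hilbert spaces'' only applies at the Banach level, where $(R,C)_\theta$ and $(R,C)_\gamma$ are both just $\ell_2$ and hence trivially isomorphic; any linear map it produces carries no information about the matrix norms, which is precisely where $\theta$ and $\gamma$ differ. Nor can you apply Hilbert-space rigidity to the amplifications themselves: the spaces $\M_n((R,C)_\theta)$ are not Hilbertian, and the hypothesis only controls the $n$-th amplification of the $n$-th map on the ball $n\cdot B_{\M_n(X)}$, so there is no single coarse map between Hilbert spaces to rigidify. The legitimate operator-space linearization available here is Proposition \ref{PropAlmSphUltra}, which produces a complete $\R$-isomorphic embedding of $(R,C)_\theta$ into the \emph{ultrapower} $\big((R,C)_\gamma\big)^{\cU}$; to close your argument along those lines you would then need a linear non-embedding theorem into that ultrapower (which need not again be an interpolation space of row and column spaces), and you supply no such result. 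The subsidiary claims are also unsupported: a coarse embedding has no reason to ``preserve Hilbert--Schmidt norms,'' and the identity $\|\mathrm{id}\colon(R_n,C_n)_\theta\to(R_n,C_n)_\gamma\|_{\cb}=n^{|\theta-\gamma|}$ is asserted without proof and is not what drives the paper's argument.

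The paper's proof avoids linearization entirely. Fix $r$ with $\rho_{\bar f}(r)>0$ and work with the map $g^n=f^{\alpha_n}$, $\alpha_n=rn^{\theta/2}$. The elements $a_{n,j}$ and $b_{n,j}$ (single entries $re_{1,2j-1}$, $re_{1,2j}$ in disjoint matrix slots) satisfy $\|a_{n,j}-b_{n,j}\|=\sqrt{2}r$, so each image difference $g^n(re_{1,2j-1})-g^n(re_{1,2j})$ has norm at least $\rho_{\bar f}(\sqrt{2}r)>0$; by Rosenthal's $\ell_1$-theorem one may assume these differences are weakly null, and Lemma \ref{LemmaLowerBoundInterpolation} then forces the column assembled from them to have norm at least $Dn^{\gamma/2}$ in $\M_n((R,C)_\gamma)$. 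The crucial structural observation --- absent from your proposal --- is that because amplifications act entrywise and the $a_{n,j}$ occupy disjoint coordinates, this same column equals $g^n_n(c_n)-g^n_n(d_n)$ where $c_n=\sum_j a_{n,j}$, $d_n=\sum_j b_{n,j}$ satisfy $\|c_n-d_n\|\le 2rn^{\theta/2}$; the equi-coarse (hence coarse Lipschitz, by Remark \ref{RemarkAffineBound}) upper bound then gives $Dn^{\gamma/2}\le 2Lrn^{\theta/2}+L$, a contradiction for $\theta<\gamma$. Your witnesses $\rho_n,\kappa_n$ are in the right spirit, but without the disjoint-support/entrywise trick and the weak-null lower bound of Lemma \ref{LemmaLowerBoundInterpolation}, the coarse hypothesis never gets converted into the competing growth rates $n^{\theta/2}$ versus $n^{\gamma/2}$.
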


Since $(R,C)_0=R$, $(R,C)_1=C$ and $(R,C)_{1/2}=\OH$ completely isometrically, we have the following immediate corollary (the statement below can be considerably strengthened for $R$ and $C$, see  Remark \ref{RemarkUncollapsed}).

\begin{corollary}\label{CorRandCnotEmbInterpolation}
 The  operator spaces $R$, $C$, and $\OH$ are incomparable with respect to   almost  complete coarse  embeddability of bounded subsets.\qed
\end{corollary}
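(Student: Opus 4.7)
The corollary is essentially a tautological consequence of Theorem~\ref{ThmRandCnotEmbInterpolation} together with the three completely isometric identifications $(R,C)_0=R$, $(R,C)_1=C$, and $(R,C)_{1/2}=\OH$ that are recorded immediately before the statement. Since almost complete coarse embeddability of bounded subsets is invariant under complete isometry (indeed, under complete isomorphism), any such embedding between two of $R$, $C$, $\OH$ would transport to an embedding between the corresponding interpolation operator spaces $(R,C)_\theta$ and $(R,C)_\gamma$ for the associated parameters $\theta,\gamma\in\{0,\tfrac12,1\}$.

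The plan, then, is simply to enumerate the six ordered pairs $(\theta,\gamma)$ of distinct elements of $\{0,\tfrac12,1\}$ and in each case invoke Theorem~\ref{ThmRandCnotEmbInterpolation} to rule out the corresponding embedding. Concretely: taking $(\theta,\gamma)=(0,\tfrac12),(0,1),(\tfrac12,0),(\tfrac12,1),(1,0),(1,\tfrac12)$ rules out almost complete coarse embeddability of bounded subsets of $R\to\OH$, $R\to C$, $\OH\to R$, $\OH\to C$, $C\to R$, and $C\to\OH$, respectively. Incomparability is precisely the conjunction of these six non-embeddability statements, so the corollary follows.

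There is essentially no obstacle here, since all the real work is deferred to Theorem~\ref{ThmRandCnotEmbInterpolation} (which presumably is proved in Section~\ref{SectionNotEmbedding} via an analysis of how the interpolation parameter $\theta$ is detected by the matrix-norm structure on bounded subsets). The only tiny thing worth verifying, and worth a parenthetical mention, is that the notion of almost complete coarse embeddability of bounded subsets is indeed preserved under complete isometries on both sides, so that the identifications $(R,C)_0=R$ etc.\ transfer the non-embeddability conclusions across; this is immediate from Definition~\ref{DefiEmbIntro}, since a complete isometry carries $n\cdot B_{\M_n(\cdot)}$ onto $n\cdot B_{\M_n(\cdot)}$ and preserves all matrix norms.
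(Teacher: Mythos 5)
Your proposal is correct and matches the paper's intended argument exactly: the paper states this corollary with a \qed precisely because it follows immediately from Theorem \ref{ThmRandCnotEmbInterpolation} via the completely isometric identifications $(R,C)_0\equiv R$, $(R,C)_{1/2}\equiv\OH$, $(R,C)_1\equiv C$ and the (evident) invariance of the embeddability notion under complete isometry. Your parenthetical check that complete isometries preserve the sets $n\cdot B_{\M_n(\cdot)}$ and all matrix norms is the right small verification to record.
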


We point out that, in the Banach space setting, it was open for a long time whether there were Banach spaces which were incomparable with respect to coarse equivalence. This was only solved in \cite[Section 5]{BaudierLancienSchlumprecht2018JAMS}, where the authors showed that $\ell_2$ and the original Tsirelson space $T^*$ are incomparable.

We actually have   a much stronger result for nonlinear embeddability into $\OH$. Precisely,  using    methods of \cite{BragaChavezDominguez2020PAMS}, we obtain the following  strengthening of   \cite[Theorem 1.2]{BragaChavezDominguez2020PAMS}:

\begin{corollary}\label{CorOH}
If the bounded subsets of an infinite dimensional operator space $X$ almost completely coarsely embed into $\OH$,   then $X$ is completely isomorphic to $\OH$.
\end{corollary}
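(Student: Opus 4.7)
I would adapt the proof of \cite[Theorem 1.2]{BragaChavezDominguez2020PAMS}. That earlier theorem obtains the same conclusion under the stronger hypothesis that $X$ almost completely coarsely embeds into $\OH$; the key observation is that its main extraction argument is local in nature, so it should go through whenever only bounded-subset data is available, provided one tracks scales carefully and uses the corner embeddings $\M_m\hookrightarrow\M_n$.

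\textbf{Step 1: Reduction to local data on finite-dimensional subspaces.} Fix a finite-dimensional subspace $E\subseteq X$ and a radius $R>0$. For each $n\geq R$, the restriction $f^n\restriction_{R\cdot B_E}$ is well defined, and so is the $m$-th amplification $f^n_m\restriction_{R\cdot B_{\M_m(E)}}$ for every $m\leq n$. Viewing $[x_{ij}]\in \M_m(E)$ as the corner $[x_{ij}]\oplus 0\in\M_n(X)$, which is a complete isometric embedding, the difference
\[
f^n_n\big([x_{ij}]\oplus 0\big)-f^n_n\big([y_{ij}]\oplus 0\big)
\]
equals $(f^n_m([x_{ij}])-f^n_m([y_{ij}]))\oplus 0$, since the constant value $f^n(0)$ on the complementary block cancels. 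Because $\|[x_{ij}]\oplus 0\|_{\M_n(X)}=\|[x_{ij}]\|_{\M_m(X)}\leq R\leq n$, the equi-coarse and equi-expansion estimates of $(f^n_n\restriction_{n\cdot B_{\M_n(X)}})_n$ transfer, with the \emph{same} moduli, to the family $(f^n_m\restriction_{R\cdot B_{\M_m(E)}})_{n\geq \max(R,m)}$. Rescaling, the maps $g^{n,R}\colon B_E\to\OH$ defined by $g^{n,R}(x)=R^{-1}f^n(Rx)$ have $m$-th amplifications which are equi-coarse and equi-expanding on $B_{\M_m(E)}$ for all sufficiently large $n$.

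\textbf{Step 2: Linearization and conclusion.} Following the machinery of \cite{BragaChavezDominguez2020PAMS}, one extracts from $(g^{n,R})$, via an ultrafilter/Mazur-type limit as $R\to\infty$ and $n\to\infty$, a \emph{linear} complete isomorphic embedding $E\to\OH$ whose constants depend only on the coarseness and expansion moduli of the original sequence. Since $E\subseteq X$ was arbitrary, all finite-dimensional subspaces of $X$ completely isomorphically embed into $\OH$ with uniform constants. Combining this with the hypothesis that $X$ is infinite-dimensional and the structural rigidity of $\OH$ invoked in the original proof then yields that $X$ is completely isomorphic to $\OH$.

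\textbf{Main obstacle.} The delicate step is the linearization: producing a genuinely linear map in the limit rather than merely a Lipschitz or coarse one. This requires the differentiation-type techniques for operator-space-valued maps developed in \cite{BragaChavezDominguez2020PAMS}, adapted to the present setting where the base maps are defined only on bounded subsets. A useful preparatory reduction, familiar from the Banach-space setting, is to first replace each $g^{n,R}$ by a modification whose coarseness modulus is Lipschitz at large scales (exploiting that $\OH$ is Hilbertian as a Banach space), so that the rescalings $R^{-1}f^n(R\,\cdot)$ produce equi-Lipschitz families and the ultrafilter limits are well defined.
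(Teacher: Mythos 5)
Your Step 1 is sound and is in fact the one nontrivial computation hidden in the paper's argument: viewing $\bar x\in\M_m(X)$ as the corner $\bar x\oplus 0\in\M_n(X)$ for $n\ge\max(m,\|\bar x\|)$ and cancelling the constant block $f^n(0)$ shows that the moduli of $(f^n_n\restriction_{n\cdot B_{\M_n(X)}})_n$ control \emph{every} amplification $f^n_m$ on bounded sets. But from there the paper takes a much shorter road than you do, and your detour contains a step that fails. The paper (Proposition \ref{PropAlmSphUltra}) does \emph{not} rescale or localize to finite-dimensional subspaces: it assembles the whole sequence into a single map $g\colon X\to \OH^\cU$, $g(x)=[(f^n(x))_n]$, which is well defined because any fixed $x$ lies in $n\cdot B_X$ for all $n\ge\|x\|$. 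The corner computation then gives
\[
\rho_{\bar f}\big(\|\bar x-\bar y\|_{\M_n(X)}\big)\le \|g_n(\bar x)-g_n(\bar y)\|_{\M_n(\OH^\cU)}\le \omega_{\bar f}\big(\|\bar x-\bar y\|_{\M_n(X)}\big)
\]
for all $n$ and all $\bar x,\bar y\in\M_n(X)$, with the \emph{original, unrescaled} moduli. Now Theorem \ref{ThmCompleteCoarseIsRLinear} (the rigidity theorem, not a differentiation argument) makes $g$ exactly $\R$-linear, and for a linear map, expansion at a single scale $r_0$ with $\rho_{\bar f}(r_0)>0$ self-improves by homogeneity to the linear lower bound $\rho_{\bar f}(r_0)r_0^{-1}\|\bar x-\bar y\|$. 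Hence $g$ is a complete $\R$-isomorphic embedding into $\OH^\cU\equiv\OH(I)$, and the rest is verbatim the proof of \cite[Theorem 1.2]{BragaChavezDominguez2020PAMS}.

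The genuine gap in your proposal is the rescaling $g^{n,R}(x)=R^{-1}f^n(Rx)$. After rescaling, the expansion modulus becomes $t\mapsto R^{-1}\rho_{\bar f}(Rt)$, and since $\rho_{\bar f}$ need only tend to infinity --- Remark \ref{RemarkAffineBound} explicitly warns that it need not be bounded below by any affine map (think $\rho_{\bar f}(t)=\log(1+t)$) --- this quantity can tend to $0$ as $R\to\infty$ for every fixed $t$. So the family $(g^{n,R})_{n,R}$ is not equi-expanding uniformly in $R$, the ultralimit as $R\to\infty$ can fail to be bounded below, and the ``uniform constants'' you invoke in Step 2 are simply not available; your proposed repair in the last paragraph smooths the \emph{upper} modulus, which is already affine by Remark \ref{RemarkAffineBound} and is not where the problem lies. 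The fix is exactly not to rescale: pass to the ultrapower at the original scale so that the lower modulus is preserved, and let the linearity delivered by Theorem \ref{ThmCompleteCoarseIsRLinear} do the renormalization for you.
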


While Section \ref{SectionNotEmbedding} provides restrictions for the existence of almost complete coarse embeddings between bounded subsets of operators spaces, in Section \ref{SectionSomeEmb}, we provide nontrivial examples of when such embeddings exist. Moreover, our methods allow us to provide examples which are not only embeddings, but actually equivalences. For that, we consider the following:

 \begin{definition}\label{DefiEquivIntro}
 Let $X$ and $Y$ be operator spaces. We say   that the   \emph{bounded subsets of $X$ and $Y$ are  almost completely coarsely equivalent} if there is a sequence  of bijections   $(f^n\colon X\to Y)_n$ such  that, letting  $g^n=(f^n)^{-1}$ for each $n\in\N$,  the amplifications 
 \[\big(f^n_n \restriction_{ n\cdot B_{\M_n(X)}}\colon n\cdot B_{\M_n(X)}\to\M_n(Y)\big)_n\text{ and }\big(g^n_n \restriction_{ n\cdot B_{\M_n(Y)}} \colon n\cdot B_{\M_n(Y)}\to\M_n(X)\big)_n\]
 are equi-coarse embeddings.  
 \end{definition}

The next result shows that  the  new notion of nonlinear embeddability/equivalence  (Definitions \ref{DefiEmbIntro} and \ref{DefiEquivIntro}) overcomes      issues \eqref{ItemI} and \eqref{ItemII}. For that,   we adapt a method of N. Kalton   of finding nonisomorphic coarsely equivalent Banach spaces (\cite{Kalton2012MathAnnalen}) to the operator space setting. 

 \begin{theorem}\label{ThmSepBoundEquivSpacesEQUIV2SEPARABLE}
There are separable  operator spaces $X$ and $Y$ so that
\begin{enumerate}
\item $X$ does not  isomorphically embed into $Y$, and 
\item the bounded subsets of     $X$ and $Y$ are almost completely coarsely equivalent. 
\end{enumerate}  
\end{theorem}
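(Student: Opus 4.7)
The plan is to adapt N.~Kalton's method from \cite{Kalton2012MathAnnalen}, which produced a pair of separable Banach spaces that are coarsely equivalent yet fail to embed isomorphically in one direction. First I would fix such a pair of separable Banach spaces $X_0, Y_0$, together with Kalton's coarse equivalences $\phi \colon X_0 \to Y_0$ and $\psi \colon Y_0 \to X_0$. I would then equip $X_0, Y_0$ with appropriate operator space structures to obtain $X$ and $Y$ --- a natural first attempt is the minimal operator space structure, $X = \MIN(X_0)$ and $Y = \MIN(Y_0)$, since $\MIN$ does not alter the underlying Banach space and therefore inherits Kalton's non-embeddability property. This yields item (1) of the theorem immediately, and also keeps both spaces separable.

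For item (2), the plan is to construct a sequence of bijections $(f^n \colon X \to Y)_n$, each a scale-dependent variant of $\phi$ tailored to the $n$-th amplification level. The key observation making this possible is that the definition only requires $f^n_n$ to be coarse and expanding on the bounded set $n \cdot B_{\M_n(X)}$, not on all of $\M_n(X)$. This restriction is essential: a sequence with equi-coarse amplifications on all of $\M_n(X)$ for all $n$ would, by a standard limiting/linearization argument on a weakly convergent subsequence of linear parts, yield a linear isomorphic embedding $X \to Y$, contradicting item (1). Within Kalton's framework, $\phi$ decomposes as a bounded linear map plus a bounded nonlinear correction; the map $f^n$ is obtained by rescaling this decomposition so that, on $n \cdot B_X$, the linear part controls the geometry while the correction produces a uniformly bounded distortion once amplified. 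Each $f^n$ is extended to a global bijection of $X$ onto $Y$ by a standard radial procedure off a sufficiently large ball, and the inverse family $g^n = (f^n)^{-1}$ is built symmetrically using $\psi$.

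The principal obstacle is quantitative: passing from the scalar-level coarse moduli of Kalton's maps to $n$-independent moduli for the amplifications. In the $\MIN$ structure, matrix norms at level $n$ dominate the entrywise norm but can exceed it by a factor of order $n$, so an entrywise application of a fixed coarse map cannot by itself produce an equi-coarse family across all amplification levels. The remedy exploits the identity $\|L\|_{\cb} = \|L\|$ for linear maps valued in a $\MIN$ operator space: the linear part of $f^n$ amplifies with the same operator norm as at the scalar level, while the rescaling of the decomposition ensures that the nonlinear correction, once restricted to $n \cdot B_{\M_n(X)}$, contributes only a bounded error in the $\M_n$-norm uniformly in $n$. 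Carrying out this quantitative balance, together with the parallel verification for $(g^n_n)_n$, is the heart of the proof.
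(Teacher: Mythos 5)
Your overall instinct --- adapt Kalton's construction from \cite{Kalton2012MathAnnalen} and exploit the fact that the coarse conditions are only imposed on $n\cdot B_{\M_n(X)}$ --- points in the right direction, but the proposal has a fatal gap at its quantitative core. You claim that Kalton's coarse equivalence $\phi\colon X_0\to Y_0$ ``decomposes as a bounded linear map plus a bounded nonlinear correction.'' No such decomposition can exist for any pair satisfying item (1): if $\phi=L+b$ with $L$ linear bounded and $\sup_x\|b(x)\|=M<\infty$, then, since the inverse of a coarse equivalence between Banach spaces is automatically coarse Lipschitz (Remark \ref{RemarkAffineBound}), one has $\rho_\phi(t)\geq ct-c$ for some $c>0$, whence for every unit vector $u$ and every $t>0$, $\|Lu\|=\|L(tu)\|/t\geq(\rho_\phi(t)-2M)/t\to c>0$; so $L$ would be an isomorphic embedding of $X_0$ into $Y_0$, contradicting item (1). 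Kalton's equivalence is in fact ``linear plus \emph{positively homogeneous}'', where the homogeneous part is an unbounded, genuinely nonlinear section $f$ of a quotient map $\tilde Q\colon\cZ(Q)\to X$. Since your remedy for the factor-of-$n$ loss in the $\MIN$ matrix norms rests entirely on this nonexistent decomposition (via $\|L\|_{\cb}=\|L\|$ for the ``linear part''), the heart of your argument cannot be carried out as described.

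The paper's proof does not place $\MIN$ structures on a preexisting Kalton pair; it rebuilds the construction inside the operator space category. The space $\cZ(Q)$ is the operator-space $\ell_1$-sum of copies $Y_m$ of $\MIN(\ell_1)$ renormed at every matrix level by $\|[y_{ij}]\|_{\M_k(Y_m)}=\max\{2^{-m}\|[y_{ij}]\|_{\M_k(Y)},\|[Qy_{ij}]\|_{\M_k(X)}\}$; this is not a $\MIN$ structure. The key point (Lemma \ref{LemmaSections}) is that for each fixed $k$ and $\eps$ one can choose $m=m(k,\eps)$ so large that $2^{-m+1}Ck^2\leq\eps$, and then a positively homogeneous section of $Q$ landing in the summand $Y_m$ has $k$-th amplification with $\|f_k\|^{\eps}\leq 1$ --- this is exactly where the dependence of the map on $k$, i.e.\ the ``almost'', enters, and it is what replaces your rescaling. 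Lemma \ref{LemmaHomogenousCoarseEst} converts this into an equi-coarse Lipschitz estimate on $e^k\cdot B_{\M_k}$, and the mutually inverse bijections $y\mapsto(\tilde Q(y),y-f^k(\tilde Q(y)))$ and $(x,z)\mapsto z+f^k(x)$ yield the equivalence between $\cZ(Q)$ and $\MIN(c_0)\oplus\ker(\tilde Q)$ (Theorem \ref{ThmKaltonMethod}); non-embeddability follows because $\cZ(Q)$ is an $\ell_1$-sum of Schur spaces, hence Schur, while $c_0$ is not. As a minor further point, your parenthetical claim that equi-coarseness on all of $\M_n(X)$ would force a linear embedding $X\to Y$ is also imprecise: Proposition \ref{PropAlmSphUltra} shows that even the bounded-subsets version already produces a complete $\R$-linear embedding, but only into an ultrapower $Y^{\cU}$, and such an embedding does indeed exist for the pair constructed here.
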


Notice that the restriction for the linear embeddability of $X$ into $Y$ in Theorem \ref{ThmSepBoundEquivSpacesEQUIV2SEPARABLE} is a \emph{Banach space} restriction. To better understand the nuances of this new notion of nonlinear embeddability/equivalence, it would be interesting to have restrictions which are truly operator space ones. For that, we show that $R\cap C$ does not completely isomorphically embed into the $\ell_1$-sum of completely isomorphic copies of $\ell_\infty$  (see Propositon \ref{PropCompEmbSum}) and then, merging the methods above  with the well-known consequence of the noncommutative Khintchine inequality which gives a complete isomorphic embedding $R+C\to L_1[0,1]$ (see the discussion preceding  Corollary \ref{CorKaltonMethod} for details), we obtain the following:

  \begin{theorem}\label{ThmSepBoundEquivSpacesEQUIV2}
There are  operator spaces $X$ and $Y$ so that
\begin{enumerate}
\item $X$ linearly isomorphically embeds into $Y$, 
\item   $X$ does not completely isomorphically embed into $Y$,
and 
 \item the bounded subsets of     $X$ and $Y$ are almost completely coarsely equivalent.
\end{enumerate}  
\end{theorem}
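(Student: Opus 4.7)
The plan is to adapt the Kalton-style construction that underlies Theorem~\ref{ThmSepBoundEquivSpacesEQUIV2SEPARABLE}, installing on the base operator spaces the structures needed to exploit both Proposition~\ref{PropCompEmbSum} and the Khintchine embedding $R+C\hookrightarrow L_1[0,1]$. The Khintchine side will feed enough structure into $Y$ for the Banach-level embedding of~(1), while Proposition~\ref{PropCompEmbSum} will obstruct the complete embedding of~(2). The equivalence of bounded subsets~(3) is then inherited directly from the Kalton-style bijections of Theorem~\ref{ThmSepBoundEquivSpacesEQUIV2SEPARABLE}.

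Concretely, I would take $X$ to contain a complete isomorphic copy of $R\cap C$ as a subspace, and $Y$ to be (essentially) an $\ell_1$-sum of completely isomorphic copies of $\ell_\infty$, possibly augmented by a copy of $L_1[0,1]$ (into which $R+C$ completely isomorphically embeds). For~(1), $R\cap C$ is Hilbertian and the underlying Banach space of $Y$ contains $\ell_2$, giving the required linear isomorphic embedding $X\hookrightarrow Y$. For~(2), any complete isomorphic embedding $X\to Y$ would restrict to a complete isomorphic embedding of $R\cap C$ into $Y$; after peeling off the $L_1$-summand (on which Hilbertian subspaces carry only the $R+C$ operator space structure, strictly coarser than $R\cap C$), one would obtain a complete isomorphic embedding of $R\cap C$ into an $\ell_1$-sum of completely isomorphic copies of $\ell_\infty$, contradicting Proposition~\ref{PropCompEmbSum}.

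For~(3), the sequence of bijections $(f^n)_n$ and their inverses are produced exactly as in the proof of Theorem~\ref{ThmSepBoundEquivSpacesEQUIV2SEPARABLE}. Since the modification from that theorem lies only in the choice of operator space structure on the base spaces (and the Kalton moduli are controlled by the underlying Banach space norms), the equi-coarse and equi-expanding estimates on $f^n_n\restriction_{n\cdot B_{\M_n(X)}}$ and $g^n_n\restriction_{n\cdot B_{\M_n(Y)}}$ transfer essentially verbatim.

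The main obstacle will be the operator-space bookkeeping in~(2): one must verify that no ``compensation'' between the $\ell_\infty$-summand and the $L_1$-summand of $Y$ can accommodate the $R\cap C$-structure, so that any hypothetical complete embedding is forced to land, up to completely bounded perturbations, inside the $\ell_1$-sum-of-$\ell_\infty$'s factor where Proposition~\ref{PropCompEmbSum} applies. This will likely require careful estimates of the row and column matrix norms on Hilbertian subspaces of $L_1[0,1]$, contrasting them with the diagonal behavior of $R\cap C$.
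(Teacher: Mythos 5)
Your proposal correctly identifies the three ingredients (the Kalton construction, Proposition \ref{PropCompEmbSum}, and the Khintchine embedding $R+C\to L_1[0,1]$), but it miscasts the role of the Khintchine embedding, and this creates a genuine gap. In the paper the embedding $F\colon R+C\to L_1[0,1]$ is \emph{dualized}: since $R\cap C=(R+C)^*$ completely isometrically, $Q=F^*\colon L_\infty[0,1]\to R\cap C$ is a complete quotient map, and it is this quotient that feeds the Kalton machinery. One then takes $Y=\cZ(Q)$, which is exactly an $\ell_1$-sum of (renormings of) $L_\infty[0,1]$ --- each Banach-isomorphic to $\ell_\infty$ --- and $X=(R\cap C)\oplus \ker(\tilde Q)$. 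Item (2) is then an immediate application of Proposition \ref{PropCompEmbSum}, item (3) is Theorem \ref{ThmKaltonMethod}, and item (1) only uses that $L_\infty[0,1]\cong L_\infty[0,1]\oplus L_\infty[0,1]$ contains $R\cap C$ isometrically as a Banach space while $\ker(\tilde Q)$ already sits inside $\cZ(Q)$. No $L_1$ summand ever appears in $Y$.

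By contrast, you propose to ``augment $Y$ by a copy of $L_1[0,1]$'' to secure (1). This is both unnecessary (the Banach-level embedding is essentially free, since $R\cap C$ is just $\ell_2$ as a Banach space) and harmful: once $Y$ has an $L_1$ summand, Proposition \ref{PropCompEmbSum} no longer applies to $Y$, and the ``peeling off'' argument you sketch --- showing that a hypothetical complete embedding of $R\cap C$ must avoid the $L_1$ factor --- is precisely the unresolved work you flag as ``the main obstacle.'' It would require a genuinely new argument about Hilbertian subspaces of $L_1[0,1]$ that the paper never needs. Moreover, modifying $Y$ in this way detaches it from the conclusion of Theorem \ref{ThmKaltonMethod}, so item (3) would no longer transfer ``essentially verbatim'' either: the Kalton equivalence is between $\cZ(Q)$ and $X\oplus\ker(\tilde Q)$ for the specific quotient $Q$, not between arbitrary augmentations of these spaces. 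Finally, your sketch never says where the quotient map driving the Kalton construction comes from; identifying $Q=F^*$ via the duality $R\cap C=(R+C)^*$ is the one nontrivial new idea in this proof, and it is missing from your outline.
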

  
 We point out that we do not know if the examples of $X$ and $Y$ provided by Theorem \ref{ThmSepBoundEquivSpacesEQUIV2} are such that $X$ does not \emph{almost} completely isomorphically embed into $Y$. Also, the examples in Theorem \ref{ThmSepBoundEquivSpacesEQUIV2} are nonseparable.

At last, in  Section \ref{SectionSphericalEmb},  we investigate a yet  different kind of nonlinear mapping between operator spaces.  Both in  classic operator space theory and in the nonlinear approach presented above, one considers  maps $f\colon X\to Y$ between operator spaces (or subsets of operator spaces) and then imposes restrictions on the  amplifications $f_n\colon \M_n(X)\to \M_n(Y)$. However,  the opposite approach is also possible:  one can start with a map   $F\colon \M_n(X)\to \M_n(Y)$ and then weaken the condition  of $F$ being the amplification of a map $X\to Y$. This  gives rise to what we call    \emph{spherical amplifications}. Throughout this paper, given a Banach space $X$, $\partial B_X$ denotes its unit sphere.

\begin{definition}\label{Defi.Spherical}
Let $X$ and $Y$ be operator spaces.
\begin{enumerate}
\item\label{Defi.Spherical.Item1} Given  $n\in\N$, a map $F\colon \M_n(X)\to \M_n(Y)$  is  a \emph{spherical amplification}  if for all $r>0$ there is $f\colon X\to Y$ such  that  \[F\restriction_{ r\cdot \partial B_{\M_n(X)}}=f_n \restriction_{ r\cdot \partial B_{\M_n(X)}}.\] 
\item\label{Defi.Spherical.Item2} A sequence $(F^n\colon \M_n(X)\to \M_n(Y))_n$ of spherical amplifications is a \emph{spherically-complete coarse embedding} if $(F^n)_n$ is a sequence of equi-coarse embeddings.
\end{enumerate}
\end{definition}

In other words, spherical amplifications are maps $\M_n(X)\to \M_n(Y)$ whose restrictions to the spheres centered at zero are amplifications. Notice that the composition of spherical amplifications    does not need to be a spherical amplification. So, it is not clear whether spherically-complete coarse embeddability is a transitive notion between operator spaces.  This notion does however sit in between   almost complete coarse embeddability and almost complete coarse embeddability of bounded subsets  (see Proposition \ref{PropSpheImpliesBounded}).  As such, the existence (and non existence) of such embeddings sheds  light on what extra properties nonlinear embeddings are allowed to have. 

As spherically-complete coarse embeddability is stronger than almost complete coarse embeddability of bounded subsets (Proposition \ref{PropSpheImpliesBounded}), the following is an immediate  corollary of Theorem \ref{ThmRandCnotEmbInterpolation}:

 \begin{corollary}
Let $R$ and $C$ be the column and row operator spaces, respectively, and  $\theta,\gamma \in [0,1]$ be distinct. Then, the interpolation operator space    $(R,C)_\theta$ does not spherically-completely  coarsely  embed into  $(R,C)_\gamma$.\qed
\end{corollary}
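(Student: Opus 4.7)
The plan is to deduce the statement by contrapositive, using Proposition~\ref{PropSpheImpliesBounded} as a bridge to Theorem~\ref{ThmRandCnotEmbInterpolation}. Suppose for contradiction that for some distinct $\theta,\gamma\in[0,1]$ the space $(R,C)_\theta$ spherically-completely coarsely embeds into $(R,C)_\gamma$, witnessed by a sequence of spherical amplifications $(F^n\colon \M_n((R,C)_\theta)\to \M_n((R,C)_\gamma))_n$ which is equi-coarse and equi-expanding. By Proposition~\ref{PropSpheImpliesBounded}, spherically-complete coarse embeddability implies almost complete coarse embeddability of bounded subsets. Hence the bounded subsets of $(R,C)_\theta$ almost completely coarsely embed into $(R,C)_\gamma$, which directly contradicts Theorem~\ref{ThmRandCnotEmbInterpolation}.

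There is no genuine obstacle at the level of this corollary, since all of the content has already been deposited in Theorem~\ref{ThmRandCnotEmbInterpolation} and Proposition~\ref{PropSpheImpliesBounded}. The only subtlety worth flagging is the passage from the maps $F^n$ back to a family defined on $n\cdot B_{\M_n((R,C)_\theta)}$: for each radius $r\in(0,n]$ the definition of spherical amplification supplies some $f^{n,r}\colon (R,C)_\theta\to (R,C)_\gamma$ whose $n$-th amplification agrees with $F^n$ on $r\cdot\partial B_{\M_n((R,C)_\theta)}$, and combining these over all relevant concentric spheres (this is exactly the content of Proposition~\ref{PropSpheImpliesBounded}) produces the sequence required by Definition~\ref{DefiEmbIntro}. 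Beyond invoking that proposition, nothing further needs to be verified, and in particular no interpolation-theoretic input is needed at this stage since all such input has already been absorbed into Theorem~\ref{ThmRandCnotEmbInterpolation}.
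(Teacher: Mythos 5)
Your proposal is correct and is exactly the paper's argument: the corollary is stated as an immediate consequence of Proposition \ref{PropSpheImpliesBounded} combined with Theorem \ref{ThmRandCnotEmbInterpolation}, which is precisely the reduction you carry out. The only cosmetic difference is that your parenthetical sketch of how the witnesses on concentric spheres are assembled is looser than the actual proof of Proposition \ref{PropSpheImpliesBounded} (which fixes a single witness per $n$ and uses a padding trick), but since you correctly defer that work to the proposition itself, nothing is missing.
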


Obtaining examples of spherically-complete coarse embeddings is much harded than almost complete coarse embeddings of bounded subsets. One of the obstructions for that comes from the fact that the former requires functions which are defined and coarse embeddings on entire spaces; not only on bounded subsets.  However, using gluing techniques of N. Kalton (\cite{Kalton2012MathAnnalen}), we improve the construction of the maps on bounded subsets $n\cdot B_X\subset X\to Y$ given in  Theorem \ref{ThmSepBoundEquivSpacesEQUIV2SEPARABLE} in order to obtain spherical-amplifications defined on the whole $\M_n(X)$. As a final product, we obtain the following:

\begin{theorem}\label{ThmSepSphericalEmb}
There are separable operator spaces $X$ and $Y$ so that

\begin{enumerate}
\item $X$ does not  isomorphically embed into $Y$, and
\item    $X$ spherically-completely coarsely embeds into  $Y$. 
\end{enumerate}  
\end{theorem}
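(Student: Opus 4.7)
The plan is to take the separable operator spaces $X$ and $Y$ provided by Theorem \ref{ThmSepBoundEquivSpacesEQUIV2SEPARABLE} and upgrade the construction therein to produce spherically-complete coarse embeddings. Since the pair $(X,Y)$ from that theorem already satisfies (1), the task reduces to promoting the maps $f^n\colon n\cdot B_X \to Y$ (whose $n$-th amplifications equi-coarsely embed $n\cdot B_{\M_n(X)}$ into $\M_n(Y)$) to globally defined functions $F^n\colon \M_n(X)\to \M_n(Y)$ which are spherical amplifications and together form an equi-coarse embedding.

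The key structural observation is that a spherical amplification $F^n$ is completely determined by a family $\{f^{(r)}\colon r\cdot B_X \to Y\}_{r>0}$ of maps, indexed by radii, via the entry-wise prescription $F^n([x_{ij}])=[f^{(\|[x_{ij}]\|)}(x_{ij})]$; this is well-defined because every entry of a matrix with operator space norm $r$ has norm at most $r$. Thus the construction reduces to choosing, for each radius $r$, a map $f^{(r)}$ whose amplifications yield a coarse embedding on the sphere $r\cdot \partial B_{\M_n(X)}$ with moduli uniform in $n$, in such a way that the family varies coherently enough with $r$ for the global $F^n$ to be a coarse embedding across different spheres.

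I would carry this out by a dyadic scale argument: for each $k\in \mathbb{Z}$ set $r_k=2^k$, and on the dyadic annulus of radii $[r_k, r_{k+1}]$ define $f^{(r)}$ to be a rescaled version of $f^{m(k)}$ from Theorem \ref{ThmSepBoundEquivSpacesEQUIV2SEPARABLE}, where $m(k)$ is chosen large enough so that the sphere of radius $r_k$ lies within the controlled domain of the corresponding amplification. The within-shell coarse embedding property is then inherited directly from the bounded-subset statement of Theorem \ref{ThmSepBoundEquivSpacesEQUIV2SEPARABLE}. For the cross-shell behaviour I would invoke N. Kalton's gluing technique from \cite{Kalton2012MathAnnalen}: the transition between the maps $f^{(r_k)}$ and $f^{(r_{k+1})}$ must be arranged so as to produce only a uniformly bounded discrepancy on each transition sphere, so that the coarse and expansion moduli on cross-shell pairs remain controlled by the within-shell moduli plus a bounded additive error.

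The hard part will be exactly this cross-shell alignment: the bounded-subset maps from Theorem \ref{ThmSepBoundEquivSpacesEQUIV2SEPARABLE} carry no a priori guarantees on how their values compare across different scales, so suitable translations and rescalings must be built into each $f^{m(k)}$ to force the boundary values to agree up to a bounded error. The delicate point for the operator space setting is that Kalton's gluing must interact well with all amplifications simultaneously, not merely at the Banach space level; one will need to verify that the alignment at scale $r_k$ is achieved in the operator space sense, so that the resulting sequence $(F^n)_n$ is genuinely equi-coarse rather than only equi-coarse at each fixed amplification level.
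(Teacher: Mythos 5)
Your overall instinct---reduce a spherical amplification to a radially indexed family $\{f^{(r)}\}_{r>0}$ and glue across scales \`a la Kalton---is the right general shape, and it matches the paper's Lemma \ref{LemmaKaltonGluing}. But there are two genuine gaps. First, the pair of spaces you start from is the wrong one. Theorem \ref{ThmSepBoundEquivSpacesEQUIV2SEPARABLE} concerns the pair $\big(\MIN(c_0)\oplus\ker(\tilde Q),\,\cZ(Q)\big)$ and its equivalence maps $g^k(y)=(\tilde Q(y),y-f^k(\tilde Q(y)))$, $h^k(x,z)=z+f^k(x)$. These cannot be promoted to spherical amplifications: on a fixed sphere of $\M_k\big(X\oplus\ker(\tilde Q)\big)$ the norm of the $X$-component varies, so $h^k_k$ is not the entrywise application of a single function of the entries; the paper explicitly notes after Theorem \ref{ThmKaltonMethodSpherical} that the maps $G^k$ and $T^k$ need not be spherical amplifications. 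What the paper actually proves is that the \emph{quotient target} $\MIN(c_0)$ spherically-completely embeds into $\cZ(Q)$ via a spherical-amplification \emph{section} of $\tilde Q_k$; the statement correspondingly degrades from an equivalence to an embedding, and the pair becomes $(\MIN(c_0),\cZ(Q))$, with item (1) supplied by the Schur property of $\cZ(Q)$.

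Second, your ``cross-shell alignment by translations and rescalings'' is exactly the point where the argument must be an actual construction, and for the maps you propose to start from there is no mechanism to achieve it: two coarse embeddings of balls of different radii need not agree, even up to a bounded perturbation, on a common sphere. The paper's resolution is not post-hoc alignment but a change of construction at the root: by Lemma \ref{LemmaSections}, the $\ell_1$-sum structure of $\cZ(Q)=(\bigoplus_m Y_m)_{\ell_1}$, with $Y_m$ renormed by $\max\{2^{-m}\|\cdot\|_Y,\|Q(\cdot)\|_X\}$, yields for every accuracy $\eps$ a positively homogeneous section $f$ of $\tilde Q$ with $\|f_k\|^{\eps}\le 1$ (push the section into a deep enough summand $Y_m$). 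All these maps are sections of the \emph{same} quotient, uniformly bounded by $1$; convex combinations of sections are sections, so linear interpolation in the parameter produces a family $(f^t)_{t\ge 0}$ with $\|f^t_k-f^s_k\|\le|t-s|$ for free---this is the coherence hypothesis of Lemma \ref{LemmaKaltonGluing}, and it is not available for arbitrary bounded-subset embeddings. Finally, your plan does not address the equi-expansion of the glued maps across shells: within-shell lower bounds do not control pairs of points in distant shells. The paper gets equi-expansion because the glued $F^k$ is a section of $\tilde Q_k$ and hence admits the globally defined equi-coarse inverse system $G^k$, $T^k$ of Theorem \ref{ThmKaltonMethodSpherical}; some such global left inverse (or an explicit cross-shell lower estimate) is an ingredient you would need to supply.
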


 \section{Preliminaries}\label{SectionPrelim}
 
 \subsection{Basics on operator spaces}\label{SubsectionBasics}  For a detailed treatment of operator spaces, we refer to \cite{Pisier-OS-book}. 
 
 As described in the introduction, if $X$ is a subspace of $\cB(H)$, for some Hilbert space $H$, and $n\in\N$, then  $\M_n(X)$  comes equipped with the norm  induced by $\cB(H^{\oplus n})$ via the canonical inclusion $\M_n(X)\subset \M_n(\cB(H))$ and the canonical algebraic isomorphism $\M_n(\cB(H))=\cB(H^{\oplus n})$. We denote this norm by  $\|\cdot \|_{\M_n(X)}$.

 An arbitrary subset $X\subset \cB(H)$ (i.e., not necessarily a closed linear subset), where $H$ is a Hilbert space, is called an \emph{operator metric space}. Besides the metric that $X$ inherits   from $\cB(H)$, each $\M_n(X)$ inherits   a canonical metric given  from its containment in the operator space $\M_n(\cB(H))$. Elements $\bar x\in \M_n(X)$ have a canonical representation as $\bar x=[x_{ij}]_{i,j=1}^n$, where $x_{ij}\in X$ for all $i,j\in \{1,\ldots, n\}$. Moreover, for simplicity, we omit the indices and  simply write $[x_{ij}]$ for $[x_{ij}]_{i,j=1}^n$.

 If $X$ and $Y$ are operator spaces, we write $X\equiv Y$ to denote that $X$ and $Y$ are completely linearly isometric to each other. We denote the \emph{opposite operator space of $X$} by $X^\mathrm{op}$, i.e., $X^\mathrm{op}=X$ as a set and, for each $n\in\N$, the norm in $\M_n(X^\mathrm{op})$ is given by
 \[\|[x_{ij}]\|_{\M_n(X^\mathrm{op})}=\|[x_{ji}]\|_{\M_n(X )} \] for all $[x_{ij}]\in \M_{n}(X^\mathrm{op})$ (\cite[Section 2.10]{Pisier-OS-book}).


 \subsection{Interpolation spaces}\label{SubsectionInterpolation}
 
 Given $\theta\in [0,1]$, operator spaces $X$ and $Y$, and  homeomorphic linear embeddings of $X$ and $Y$ into  a topological vector space $Z$, we denote their \emph{complex $\theta$-interpolation space}   by $(X,Y)_\theta$. We refer the reader to the  monograph  \cite{BerghLofstromBook} for a detailed treatment of interpolation of Banach spaces and to   \cite[Section 2.7]{Pisier-OS-book} for its operator space version. Here, we only mention that the operator space structure of $(X,Y)_\theta$ is given by 
 \[\M_n((X,Y)_\theta)=(\M_n(X),\M_n(Y))_\theta \text{ for all } n\in\N.\] 
 
Notice that there is an abuse of notation in $(X,Y)_\theta$. Indeed, the ambient space $Z$ and the embeddings $X\hookrightarrow Z$ and $Y\hookrightarrow Z$ play a fundamental role on the space $(X,Y)_\theta$. Therefore, since we will consider $(R,C)_\theta$ and $(C,R)_\theta$ below, it is important to fix $Z$ and  embeddings $R\hookrightarrow Z$ and $C\hookrightarrow Z$. We do such in the standard way:   let $Z=\ell_2$ and let $R\hookrightarrow \ell_2$ and $C\hookrightarrow \ell_2$ denote the canonical  linear isometries. Equivalently, $(R,C)_\theta\equiv (R,R^\mathrm{op})_\theta$  and $(C,R)_\theta\equiv (C,C^\mathrm{op})_\theta$, where $  (R,R^\mathrm{op})_\theta$ and $  (C,C^\mathrm{op})_\theta$ are defined considering the identity maps $R\hookrightarrow R$ and $R^\mathrm{op}\hookrightarrow R$, and $C\hookrightarrow C$ and  $C^\mathrm{op}\hookrightarrow C$, respectively.

 The operator Hilbert space of G. Pisier, $\mathrm{OH}$, can be defined as a interpolation space of $R$ and $C$. Precisely, we have that $\OH\equiv (R,C)_{1/2}$ (see \cite[Corollary 7.11]{Pisier-OS-book}).

\subsection{Metric coarse notions}\label{SubsectionCoarseNotions}
We recall the basics of coarse geometry in this subsection.  Let $(X,d)$ and $(Y,\partial)$ be metric spaces, and $f\colon X\to Y$ be a map. The moduli $\omega_f,\rho_f\colon [0,\infty)\to [0,\infty]$ are defined by letting 
\[\omega_f(t)=\sup\{\partial(f(x),f(y))\mid d(x,y)\leq t\}\]
for all $t\geq 0$ and  
\[\rho_f(t)=\inf\{\partial(f(x),f(y))\mid d(x,y)\geq t\}\]
for all $t\geq 0$ (where the infimum over the empty set is taken  to be $\infty$). The map $f$ is \emph{coarse} if $\omega_f(t)<\infty$ for all $t\geq 0$ and \emph{coarse Lipschitz}   if there is $L>0$ such that $\omega_f(t)\leq Lt+L$ for all $t\geq 0$. The map $f$ is \emph{expanding} if $\lim_{t\to\infty}\rho_f(t)=\infty$ and a \emph{coarse embedding} if it is both coarse and expanding. If $f$ is coarse Lipschitz and, furthermore, there is $L>0$ such that $\rho_f(t)\geq L^{-1}t-L$ for all $t\geq 0$, then $f$ is a \emph{coarse Lipschitz embedding}.

The nonlinear embeddings between operator spaces  in this paper are  defined in terms of   sequences of maps between operator metric space. Therefore, we are interested in saying that a certain sequence is not only coarse/expanding, but it is so in a uniform way.  Precisely,  let $(X_n,d_n)_n$ and $(Y_n,\partial_n)_n$ be sequences of metric spaces and let   $\bar f=(f^n\colon X_n\to Y_n)_n$ be a sequence of maps. Then, we define moduli $\omega_{\bar f},\rho_{\bar f}\colon [0,\infty)\to [0,\infty]$ by letting 
\[\omega_{\bar f}(t)=\sup_{n}\omega_{f_n}(t)\text{ and }\rho_{\bar f}(t)=\inf_{n}\rho_{f_n}(t)\]
for all $t\geq 0$. Then, the sequence   $\bar f$ is \emph{equi-coarse} if $\omega_{\bar f}(t)<\infty$ for all $t\geq 0$, and $\bar f$ is  \emph{equi-coarse Lispchitz} if there is $L>0$ such  that $\omega_{\bar f}(t)\leq Lt+L$ for all $t\geq 0$. We say $\bar f$ is \emph{equi-expanding} if $\lim_{t\to \infty}\rho_{\bar f}(t)=\infty$. The sequence $\bar f$ is an \emph{equi-coarse embedding} if it is both equi-coarse and equi-expanding. If $\bar f$ is equi-coarse Lipschitz and there is $L>0$ such that $ \rho_{\bar f}(t)\geq L^{-1}t-L$ for all $t\geq 0$, then $\bar f$ is an \emph{equi-coarse Lipschitz embedding}.

\begin{remark}\label{RemarkAffineBound} If $X$ is a convex subspace of a Banach space and $Y$ is a Banach space, then   a coarse map $f \colon X\to Y$   is  automatically  coarse Lipschitz  (\cite[Lemma 1.4]{KaltonNonlinear2008}).\footnote{More generally, this holds as long as $X$ is a \emph{metrically convex} metric space in the sense of \cite[Page 16]{KaltonNonlinear2008} and $Y$ is an arbitrary metric space.} Precisely,   taking $L=\omega_f(1)$, we have that \[\|f(x)-f(y)\|\leq L\|x-y\|+L\] for all $x,y\in X$.  Similarly, if $\bar f=(f^n\colon X_n\to Y_n)_n$ is a sequence of equi-coarse embeddings between convex subsets of Banach spaces, then, letting   $L=\omega_{\bar f}(1)$, we have that 
\[\|f^n(x)-f^n(y)\|\leq L\|x-y\|+L\]
for all $n\in\N$ and all $x,y\in X$. On the other hand, we point out   that $\rho_f$  and $\rho_{\bar f}$ do not need to be bounded below by an affine map.
 \end{remark}

 The following is the main notion of embeddability studied in these notes: 
 
 \begin{definition}[Definition \ref{DefiEmbIntro}]
 Let $X$ and $Y$ be operator spaces. We say   that the \emph{bounded subsets of $X$ almost completely coarsely} (resp. \emph{coarse Lipschitzly}) \emph{embed into $Y$} if there is a sequence  of maps $(f^n\colon n\cdot B_X\to Y)_n$ such that the amplifications 
 \[\big(f^n_n\restriction_{ n\cdot B_{\M_n(X)}}\colon n\cdot B_{\M_n(X)}\to\M_n(Y)\big)_n\]
 are equi-coarse (resp. equi-coarse Lipschitz) embeddings. The sequence $(f^n\colon n\cdot B_X\to Y)_n$ is called an \emph{almost complete coarse} (resp. \emph{coarse Lipschitz}) \emph{embedding of bounded subsets}.
 \end{definition}

Despite its nonlinear nature and despite not even being defined on the whole $X$, an almost complete coarse embedding of bounded subsets induces a complete $\R$-isomorphic embedding into ultraproducts of the target space.  Precisely, the next proposition shows that the maps $(f^n\colon n\cdot B_X\to Y)_n$ can be combined  to obtain a map into $Y^\cU$ defined on the whole $X$.\footnote{Here $Y^\cU$ denotes the operator   space ultraproduct of $Y$ with respect to $\cU$.  We refer the reader to \cite[Subsection 2.8]{Pisier-OS-book} for definitions.}

\begin{proposition} \emph{(c.f.} \cite[Proposition 4.4]{BragaChavezDominguez2020PAMS}\emph{)}
Let $X$ and $Y$ be operator spaces, and suppose that the bounded subsets of  $X$   almost completely  coarsely  embed   into $Y$.  Then  $X$  completely $\R$-isomorphically embeds into  $Y^\cU$ for any nonprincipal ultrafilter $\cU$ on $\N$.\label{PropAlmSphUltra}
\end{proposition}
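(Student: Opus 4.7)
The plan is to construct an ultraproduct map directly, without introducing a Kalton-type rescaling parameter, and then bootstrap $\R$-linearity from Theorem~\ref{ThmCompleteCoarseIsRLinear} for free. After translating, assume $f^n(0)=0$ for every $n$. Since each $f^n_n$ is coarse on the convex set $n\cdot B_{\M_n(X)}$, Remark~\ref{RemarkAffineBound} gives a constant $L>0$ with
\[\|f^n_n(\bar x) - f^n_n(\bar y)\|_{\M_n(Y)} \leq L\|\bar x - \bar y\|_{\M_n(X)} + L\]
for every $n$ and all $\bar x,\bar y\in n\cdot B_{\M_n(X)}$. In particular $\|f^n(x)\|_Y \leq L\|x\|+L$ whenever $\|x\|\leq n$.

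For each $x\in X$, the sequence $(f^n(x))_{n\geq \|x\|}$ is defined on a cofinite set and norm-bounded, so setting $F(x):=[f^n(x)]_\cU\in Y^\cU$ is well defined. Under the canonical identification $\M_k(Y^\cU)=\M_k(Y)^\cU$, the $k$-th amplification of $F$ is $F_k(\bar x)=[f^n_k(\bar x)]_\cU$ for $\bar x\in\M_k(X)$. The central technical step is the following observation: for $\bar x,\bar y\in\M_k(X)$ and for $n\geq \max(k,\|\bar x\|,\|\bar y\|)$, one pads $\bar x,\bar y$ with zeros to obtain $\tilde{\bar x},\tilde{\bar y}\in n\cdot B_{\M_n(X)}$. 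Because the inclusion $\M_k(Z)\hookrightarrow\M_n(Z)$ is a complete isometry for every operator space $Z$ and amplification is entrywise with $f^n(0)=0$,
\[\|f^n_k(\bar x)-f^n_k(\bar y)\|_{\M_k(Y)} = \|f^n_n(\tilde{\bar x})-f^n_n(\tilde{\bar y})\|_{\M_n(Y)},\]
so the equi-coarse/equi-expanding moduli transfer: $\omega_{F_k}(t)\leq\omega_{\bar f}(t)$ and $\rho_{F_k}(t)\geq\rho_{\bar f}(t)$ for every $k$ and $t\geq 0$. In particular $F\colon X\to Y^\cU$ is completely coarse with $F(0)=0$.

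Theorem~\ref{ThmCompleteCoarseIsRLinear} then forces $F$ to be $\R$-linear, hence so is every $F_k$. For the upper bound, linearity together with the coarse-Lipschitz estimate applied to $s\bar u$ for unit $\bar u$ gives $s\|F_k(\bar u)\|\leq Ls+L$, so letting $s\to\infty$ yields $\|F_k(\bar x)\|\leq L\|\bar x\|$. For the lower bound, choose $t_0>0$ with $c:=\rho_{\bar f}(t_0)>0$, which exists by equi-expansion. For a unit $\bar u\in\M_k(X)$, the inequality $\rho_{F_k}(t_0)\geq c$ applied to $t_0\bar u$ versus $0$ gives $t_0\|F_k(\bar u)\|=\|F_k(t_0\bar u)\|\geq c$, so $\|F_k(\bar u)\|\geq c/t_0$, and by linearity $\|F_k(\bar x)\|\geq (c/t_0)\|\bar x\|$ for every $\bar x\in\M_k(X)$. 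This delivers the desired complete $\R$-isomorphic embedding $X\to Y^\cU$. The only delicate part of the argument is managing the bounded-domain issue: since each $f^n$ only lives on $n\cdot B_X$, one must check both that $F$ and its amplifications $F_k$ are defined on the full spaces and that the amplification moduli propagate via the $\M_k\hookrightarrow\M_n$ padding identification; once this bookkeeping is in place, the remainder of the proof transcribes directly from that of \cite[Proposition~4.4]{BragaChavezDominguez2020PAMS}.
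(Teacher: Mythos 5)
Your proof is correct and follows essentially the same route as the paper: you build the same ultraproduct map $x\mapsto [(f^n(x))_n]\in Y^\cU$, transfer the moduli $\omega_{\bar f}$ and $\rho_{\bar f}$ to its amplifications, invoke Theorem~\ref{ThmCompleteCoarseIsRLinear} to get $\R$-linearity, and then read off the two-sided norm bounds. The only difference is that you spell out the zero-padding identification $\M_k\hookrightarrow\M_n$ and the final homogeneity argument, which the paper compresses into ``it is straightforward to check.''
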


\begin{proof}
	Let $(f^n\colon n\cdot B_X \to Y )_n$ be such  that $\bar f=(f^n_n\colon n\cdot B_{\M_n(X)}\to \M_n(Y))_n$ is a sequence of equi-coarse embeddings and let  $\cU$ be a nonprincipal ultrafilter on $\N$. Then, let  $g\colon X\to Y^\cU$ be given by 
\[g(x)=[(f^{n}(x))_n] \text{ for all }x\in X,\] 
where $[(f^{n}(x))_n] $ represents   the equivalence class of $(f^{n}(x))_n$  in $Y^\cU$. Notice that, since for each $x\in X$,   $x\in n\cdot B_X$ for all $n\geq \|x\|$, the map $g$ is well defined.  Moreover,  it is straightforward to check that 
\[\rho_{\bar f}(\|\bar x-\bar y\|_{\M_n(X)})\leq \|g_n(\bar x)-g_n(\bar y)\|_{\M_n(Y^\cU)}\leq \omega_{\bar f}(\|\bar x-\bar y\|_{\M_n(X)})\]
for all $n\in \N$ and all $\bar x,\bar y\in \M_n(X)$. In particular,  $g$ is completely coarse and, by \cite[Theorem 1.1]{BragaChavezDominguez2020PAMS}, $g$ must be $\R$-linear. Therefore, it follows from the inequality above  that  $g$ must be a complete $\R$-isomorphic embedding.
\end{proof}

\begin{corollary}[Corollary \ref{CorOH}]
If the bounded subsets of an operator space $X$ almost completely coarsely embed into $\OH(J)$, for some index set $J$, then $X$ is completely isomorphic to $\OH(I)$, for some index set $I$.\footnote{Given a set $J$, $\OH(J)$ is the operator Hilbert space of G. Pisier with density character equal to the cardinality of $J$. See \cite[Chapter 7]{Pisier-OS-book} for details.}\label{CorOH2}
\end{corollary}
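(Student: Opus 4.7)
The plan is to reduce the statement to the situation already handled in \cite[Theorem 1.2]{BragaChavezDominguez2020PAMS} by passing through an ultrapower.

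First, I would apply Proposition \ref{PropAlmSphUltra} to the hypothesis: if the bounded subsets of $X$ almost completely coarsely embed into $\OH(J)$, then for any nonprincipal ultrafilter $\cU$ on $\N$ the space $X$ admits a complete $\R$-isomorphic embedding into $\OH(J)^{\cU}$. This step converts the nonlinear and only-bounded-subsets information into a genuine linear statement on the whole of $X$, modulo the mild defect that the embedding is only $\R$-linear and takes values in an ultrapower.

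Second, I would argue that the class of operator Hilbert spaces is stable under operator space ultrapowers, so that $\OH(J)^{\cU}$ is completely isometric to $\OH(K)$ for some index set $K$. This is a standard consequence of G.~Pisier's characterization of $\OH$: the operator space structure of $\OH(J)$ is uniquely determined (up to complete isometry) by being Hilbertian and completely isometric to its own antidual, and both properties are preserved by operator space ultraproducts (see \cite[Chapter 7]{Pisier-OS-book}). Combining with the previous step, $X$ completely $\R$-isomorphically embeds into $\OH(K)$.

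Third, I would invoke the argument of \cite[Theorem 1.2]{BragaChavezDominguez2020PAMS}, which in fact establishes more than is stated: given any operator space admitting a complete $\R$-isomorphic embedding into some $\OH(K)$, the space must itself be completely $\C$-isomorphic to $\OH(I)$ for an appropriate index set $I$ (one simply tracks the subspace classification of $\OH$, together with the elementary fact that a Hilbertian operator space which is $\R$-isomorphic, as an operator space, to a subspace of $\OH(K)$ is forced to be $\OH(I)$ for some $I$). Applying this to $g(X)\subset \OH(K)$ concludes the proof.

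The main obstacle is conceptually minor but technically delicate: the ultrapower step produces only an $\R$-linear embedding, so one must verify that the subspace classification of $\OH$ from \cite{BragaChavezDominguez2020PAMS} is robust enough to accommodate $\R$-linearity. The other ingredients (ultrapower stability of $\OH$, and Proposition \ref{PropAlmSphUltra}) are by now routine.
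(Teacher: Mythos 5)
Your proposal is correct and follows essentially the same route as the paper: the paper's proof is literally ``run the proof of \cite[Theorem 1.2]{BragaChavezDominguez2020PAMS} with Proposition \ref{PropAlmSphUltra} in place of \cite[Proposition 4.4]{BragaChavezDominguez2020PAMS},'' and your three steps (ultrapower embedding via Proposition \ref{PropAlmSphUltra}, stability of $\OH$ under ultrapowers, and the $\R$-linear-robust subspace classification of $\OH$) are exactly the content of that cited argument, merely unpacked.
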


\begin{proof}
This proof follows exactly the proof of \cite[Theorem 1.2]{BragaChavezDominguez2020PAMS} but with Proposition \ref{PropAlmSphUltra} replacing \cite[Proposition 4.4]{BragaChavezDominguez2020PAMS}.
\end{proof}

As usual, the  notion of embeddability described above has its ``equivalence version". Precisely:

 \begin{definition}[Definition \ref{DefiEquivIntro}]
 Let $X$ and $Y$ be operator spaces. We say   that the   \emph{bounded subsets of $X$ and $Y$ are  almost completely coarsely} (resp. \emph{coarse Lipschitzly}) \emph{equivalent} if there is a sequence  of bijections   $(f^n\colon X\to Y)_n$ such  that, letting  $g^n=(f^n)^{-1}$ for each $n\in\N$,  the amplifications 
 \[\big(f^n_n\colon n\cdot B_{\M_n(X)}\to\M_n(Y)\big)_n\text{ and }\big(g^n_n\colon n\cdot B_{\M_n(Y)}\to\M_n(X)\big)_n\]
 are equi-coarse (resp. equi-coarse Lipschitz) embeddings. The sequence $(f^n\colon n\cdot B_X\to Y)_n$ is called an \emph{almost complete coarse} (resp. \emph{coarse Lipschitz}) \emph{equivalence of bounded subsets}.
 \end{definition}

 \section{Incompatibility  of the interpolations of $(R,C)$ }\label{SectionNotEmbedding}

In this section, we prove Theorem \ref{ThmRandCnotEmbInterpolation}. For didactic reasons, we choose to present the proof of a particular case of  Theorem \ref{ThmRandCnotEmbInterpolation} before its proof in full  generality. The anxious reader can safely skip the next proposition and the remark following it. 

\begin{proposition}\label{PropRandCnotEmb}
Let $R$ and $C$ denote the column and row operator spaces, respectively. Then   the bounded subsets of $R$ do not almost completely coarsely embed into  $C$ and vice versa.
\end{proposition}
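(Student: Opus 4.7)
The plan is to apply Proposition \ref{PropAlmSphUltra} to linearize the problem, and then to derive a contradiction by testing the resulting linear map against two explicit matrices in $\M_n(R)$ whose $R$-norms have a characteristic ratio of $\sqrt n$, while their images in $\M_n(C^\cU)$ are forced by the column-space structure of $C$ to satisfy the opposite inequality.

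Suppose for contradiction that the bounded subsets of $R$ almost completely coarsely embed into $C$. By Proposition \ref{PropAlmSphUltra}, there exist a nonprincipal ultrafilter $\cU$ on $\N$ and a complete $\R$-linear isomorphic embedding $T\colon R\to C^\cU$; set $K=\|T\|_{\cb}\,\|T^{-1}\|_{\cb}$. For each $n\in\N$, I introduce the test matrices $\bar x_n,\bar x_n'\in\M_n(R)$ defined by $(\bar x_n)_{ij}=e_{1,j}$ and $(\bar x_n')_{ij}=e_{1,i}$. Applying the standard row-space formula $\bigl\|\sum_k a_k\otimes e_{1,k}\bigr\|^2_{\M_n(R)}=\bigl\|\sum_k a_ka_k^*\bigr\|$, a direct computation yields
\[
\|\bar x_n\|_{\M_n(R)}=n \qquad\text{and}\qquad \|\bar x_n'\|_{\M_n(R)}=\sqrt n.
\]

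Set $c_j=T(e_{1,j})\in C^\cU$. Because $T$ is $\R$-linear, the images satisfy $(T_n\bar x_n)_{ij}=c_j$ and $(T_n\bar x_n')_{ij}=c_i$; denote these matrices by $\bar c$ and $\bar c'$ respectively. Representing each $c_j$ by a sequence in $C$ and applying the column-space formula $\bigl\|\sum_k a_k\otimes e_{k,1}\bigr\|^2_{\M_n(C)}=\bigl\|\sum_k a_k^*a_k\bigr\|$ coordinatewise, then passing to the ultralimit, produces the identities
\[
\|\bar c\|^2_{\M_n(C^\cU)}=n\,\|G\|,\qquad \|\bar c'\|^2_{\M_n(C^\cU)}=n\,\tr(G),
\]
where $G$ is the Gram matrix of $(c_1,\dots,c_n)$ in the underlying Hilbertian structure of $C^\cU$. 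Since $\|G\|\leq\tr(G)$ in general, this forces $\|\bar c\|_{\M_n(C^\cU)}\leq\|\bar c'\|_{\M_n(C^\cU)}$.

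The CB-isomorphism bounds give $\|\bar c\|\geq n/\|T^{-1}\|_{\cb}$ and $\|\bar c'\|\leq\|T\|_{\cb}\sqrt n$, so combining with the previous inequality yields
\[
\frac{n}{\|T^{-1}\|_{\cb}}\;\leq\;\|\bar c\|\;\leq\;\|\bar c'\|\;\leq\;\|T\|_{\cb}\sqrt n,
\]
hence $K\geq\sqrt n$ for every $n$, contradicting $K<\infty$. The ``vice versa'' direction is symmetric: test instead with $(\bar y_n)_{ij}=e_{j,1}$ and $(\bar y_n')_{ij}=e_{i,1}$ in $\M_n(C)$, which have $C$-norms $\sqrt n$ and $n$ respectively, and run the analogous argument in $R^\cU$ using the row-space formula (the structural comparison in $R^\cU$ now goes the opposite way). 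I anticipate that the only technical point requiring attention will be the passage from the pointwise norm formulas in $\M_n(C)$ to the identities in $\M_n(C^\cU)$, which should be routine given the standard description of operator-space ultraproducts.
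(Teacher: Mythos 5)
Your proof is correct, but it takes a genuinely different route from the paper's. You first linearize via Proposition \ref{PropAlmSphUltra} and then rule out a complete $\R$-isomorphic embedding $T\colon R\to C^\cU$ by a cb-norm computation on the two test matrices $[e_{1,j}]_{ij}$ and $[e_{1,i}]_{ij}$; the Gram-matrix identities $\|\bar c\|^2=n\|G\|$ and $\|\bar c'\|^2=n\tr(G)$ check out (and the ultraproduct passage you flag is indeed routine, since $\M_n(Y^\cU)=\M_n(Y)^\cU$ isometrically and the scalar Gram matrices converge entrywise along $\cU$), as does the reversed comparison for the $C\to R$ direction. The paper instead argues directly and nonlinearly: it picks $r$ with $\rho_{\bar f}(r)>0$, stacks the differences $f^n(re_{1,2j-1})-f^n(re_{1,2j})$ into the first column of $\M_n(C)$ so that the column structure forces the norm to grow like $\sqrt{n}$, and observes that the same matrix equals $f^n_n(c_n)-f^n_n(d_n)$ with $\|c_n-d_n\|_{\M_n(R)}=\sqrt{2}r$, contradicting equi-coarseness. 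Your approach is cleaner at the linear level and isolates a purely linear obstruction ($R$ does not completely $\R$-isomorphically embed into any ultrapower of $C$), but it uses the full strength of almost complete coarse embeddability of bounded subsets; the paper's direct argument only needs equi-coarseness together with non-collapsing at a single scale and maps defined on $B_R$ alone (the refinement recorded in Remark \ref{RemarkUncollapsed}), and it is the template that the paper then upgrades, via Lemma \ref{LemmaLowerBoundInterpolation} and Rosenthal's theorem, to the full interpolation scale in Theorem \ref{ThmRandCnotEmbInterpolation} by tracking the exponents $n^{\theta/2}$ versus $n^{\gamma/2}$.
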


Notice that Proposition \ref{PropRandCnotEmb} is   Theorem \ref{ThmRandCnotEmbInterpolation} for $\theta=0$ and $\gamma=1$. Indeed, this is the case since $(R,C)_0\equiv(C,R)_1\equiv R$  and $(C,R)_0\equiv (R,C)_1\equiv C$. 

\begin{proof}[Proof of Proposition \ref{PropRandCnotEmb}]
Suppose $(f^n\colon  n\cdot B_R\to C)_n$ is such that $\bar f=(f^n_n\colon  n\cdot B_{\M_n(R)}\to \M_n(C))_n$ is a sequence of equi-coarse embeddings. As $\lim_{t\to \infty} \rho_{\bar f}(t)=\infty$, pick  $r>0$ such that $\rho_{\bar f}(r)>0$. 

  For each $n\in\N$ and $j\in \{1 ,\ldots, n\}$,  let $a_{n,j}\in \M_{n}(R)$ be the element whose $(j,1)$-coordinate is $re_{1,2j-1}$ and all other coordinates are zero, and let $b_{n,j} \in  \M_{n}(R)$ be the element whose $(j,1)$-coordinate is $re_{1,2j}$ and all other coordinates are zero; so $\|a_{n,j}\|_{\M_{n}(R)}=\|b_{n,j}\|_{\M_{n}(R)}=r$. It is also clear that   $\|a_{n,j}-b_{n,j}\|_{\M_{n}(R)}=\sqrt{2} r$, so it follows that 
\begin{equation}\label{EqRhoR}
\|f^{n}_n(a_{n,j})-f^{n}_n(b_{n,j})\|_{\M_{n}(C)}\geq \rho_{\bar f}(\sqrt{2}r)>0
 \end{equation}
for all $n\geq r$ and all $j\in \{1,\ldots, n\}$. Hence, since
 \[f^{n}_n(a_{n,j})-f^{n}_n(b_{n,j})=\begin{bmatrix}
 0&0&\ldots&0\\
  \vdots&\vdots  &\ddots &\vdots \\
   0&0&\ldots&0\\
f^{n}(re_{1,2j-1})-f^{n}(re_{1,2j})&0&\ldots &0\\
 
 0&0&\ldots&0\\
 \vdots&\vdots  &\ddots &\vdots \\
  0&0&\ldots&0\\
\end{bmatrix} \] 
and $f^{n}(re_{1,2j-1})-f^{n}(re_{1,2j})\in C$ for each $n\geq r $ and each $j\in\{1,\ldots, n\}$, it is clear from the operator space structure of $C$ and \eqref{EqRhoR} that   \[\lim_n\Big\|\sum_{j=1}^n\big(f_n^{n}(a_{n,j})-f_n^{n}(b_{n,j})\big) \Big\|_{\M_{n}(C)}=\infty.\]

For each $n\in\N$, let $c_n,d_n\in \M_{n}(R)$ be given by $c_n=\sum_{j=1}^na_{n,j}$ and $d_n=\sum_{j=1}^nb_{n,j}$; so $\|c_n\|_{\M_{n}(R)}=\|d_n\|_{\M_{n}(R)}=r$ and $\|c_n-d_n\|_{\M_{2n}(R)}=\sqrt{2}r$. Then,

\begin{align*}  
\Big\|\sum_{j=1}^n\big(f_n^{n}(a_{n,j})-& f_n^{n}(b_{n,j})\big)\Big\|_{\M_{n}(C)}\\
&=
\left\|\begin{bmatrix}
f^{n}(re_{1,1})-f^{n}(re_{1,2})&0&\ldots &0\\
f^{n}(re_{1,3})-f^{n}(re_{1,4})&0&\ldots &0\\
 \vdots&\vdots  &\ddots &\vdots \\
f^{n}(re_{1,2n-1})-f^{n}(re_{1,2n})&0&\ldots &0\\
\end{bmatrix}\right\|_{\M_{n}(C)}\\
&= \|f^{n}_n(c_n)-f^{n}_n(d_n)\|_{\M_{n}(C)}\\
&\leq \omega_{\bar f}(\sqrt{2}r)
 \end{align*}
 for all $n\in\N$. As $\omega_{\bar f}(\sqrt{2}r)<\infty$, this gives a contradiction.
 
 The proof that the bounded subspaces of $C$ do  not almost completely coarsely  embed into $R$ follows similarly and we leave the details to the reader. 
\end{proof}

\begin{remark}\label{RemarkUncollapsed}
Notice that we do not use the full power of  almost  complete coarse embeddability of bounded subsets in Proposition \ref{PropRandCnotEmb}. Indeed,   it is clear from its proof that, beside the family  $(f^n_n\colon  n\cdot B_{\M_n(R)}\to \M_n(C))_n$ being  equi-coarse,  we only need the existence of a single $r>0$ for which $\rho_{\bar f}(r)>0$. Moreover, the proof of Proposition \ref{PropRandCnotEmb} gives us that there is no $(f^n\colon    B_{R}\to C)_n$  such that  $(f^n_n\colon    B_{\M_n(R)}\to \M_n(C))_n$  is equi-coarse and, for some $t\in [0, 1]$ and some $\delta>0$, we have that 
\[\|f^n_n(\bar x)-f^n_n(\bar y)\|>\delta\]
for all $\bar x,\bar y\in   B_{\M_n(R)}$ with $\|\bar x-\bar y\|=\sqrt{2} t$. The existence of such  family   is an operator space version of a   map  which is coarse and \emph{almost uncollapsed} in the sense of \cite[Section 1]{Braga2018JFA} (see also \cite[Section 1]{Rosendal2017ForumSigma} for  \emph{uncollapsed maps}).
\end{remark}

Before presenting the proof of Theorem \ref{ThmRandCnotEmbInterpolation}, we need a technical lemma.

\begin{lemma}\label{LemmaLowerBoundInterpolation}
Let $\theta>0$ and $n\in\N$. Then
\[ \left\|\begin{bmatrix}
e_{1,m_1}&0&\ldots &0\\
e_{1,m_2}&0&\ldots &0\\
\vdots &\vdots &\ddots&\vdots\\
e_{1,m_n}&0&\ldots &0
 \end{bmatrix}\right\|_{\M_n((R,R^\mathrm{op})_\theta)}= n^{\theta/2}.
\]  
for all $m_1<\ldots< m_n\in \N$.  In particular, if  $(x_m)_m$ is  a semi-normalized\footnote{Recall, a sequence $(x_n)_n$ in a normed space is \emph{semi-normalized} if it is bounded and $\inf_n\|x_n\|>0$.} weakly null sequence in $R$, then there is $D>0$ depending only on $\inf_m\|x_m\|$ and an infinite $M\subset \N$ such that, for all $m_1<m_2<\ldots<m_n\in M$, we have 
\[\left\|\begin{bmatrix}
x_{m_1}&0&\ldots& 0\\
x_{m_2}&0&\ldots& 0\\
\vdots&\vdots&\ddots &\vdots\\
x_{m_n}&0&\ldots & 0
\end{bmatrix}\right\|_{\M_n((R,R^\mathrm{op})_\theta)}\geq D n^{\theta/2}.\]
\end{lemma}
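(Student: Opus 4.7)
The plan is to prove the exact identity by matching upper and lower bounds, and then to deduce the ``in particular'' part by a standard gliding-hump-and-perturbation argument.

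For the upper bound I would compute the two endpoint norms of $x=[e_{1,m_i}\delta_{j,1}]_{i,j}$ directly. Viewed as an operator on $\ell_2^{\oplus n}$, $x$ sends $(v_1,\ldots,v_n)$ to $(\langle v_1,e_{m_i}\rangle e_1)_i$, so orthonormality of the $e_{m_i}$ gives $\|x\|_{\M_n(R)}=1$. The definition $\|x\|_{\M_n(R^\mathrm{op})}=\|x^T\|_{\M_n(R)}$ then reduces the opposite norm to that of the corresponding row matrix, which a Cauchy--Schwarz computation evaluates at $\sqrt n$. Log-convexity of the complex interpolation norm gives
\[\|x\|_{\M_n((R,R^\mathrm{op})_\theta)}\leq\|x\|_{\M_n(R)}^{1-\theta}\,\|x\|_{\M_n(R^\mathrm{op})}^\theta=n^{\theta/2}.\]

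The bulk of the work is the matching lower bound via a three-lines/duality argument. I would introduce the ``conjugate'' test element $\eta\in\cB(\ell_2^{\oplus n})$ whose only nonzero matrix entries are $\eta_{1,i}=e_{m_i,1}$, and the functional $\phi(y)=\tr(\eta y)$ on the common underlying vector space of $\M_n(R)$ and $\M_n(R^\mathrm{op})$. A direct block computation gives $\phi(x)=\tr\bigl(\sum_k e_{m_k,m_k}\bigr)=n$. The key technical step is to compute the two dual norms of $\phi$. A truncation lemma---valid on both sides, since restricting a matrix to its first column can only decrease both the $\M_n(R)$ and the $\M_n(R^\mathrm{op})$ norm---reduces each computation to optimizing $|\sum_k A_{k,m_k}|$ over an $n\times\infty$ coefficient matrix $A$ with the appropriate constraint: $\|A\|_\mathrm{op}\leq 1$ yields $n$ (realized by $A_{k,m_k}=1$, with the upper bound $|\sum_k A_{k,m_k}|\leq n\|A\|_\mathrm{op}$ coming from operator-trace duality), while $\|A\|_\mathrm{HS}\leq 1$ yields $\sqrt n$ by Cauchy--Schwarz. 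Applying the three-lines lemma to $z\mapsto\phi(F(z))$ over admissible analytic $F$ with $F(\theta)=x$, and then taking the infimum over $F$, yields
\[\|x\|_{\M_n((R,R^\mathrm{op})_\theta)}\geq\frac{|\phi(x)|}{\|\phi\|_{\M_n(R)^*}^{1-\theta}\,\|\phi\|_{\M_n(R^\mathrm{op})^*}^\theta}=\frac{n}{n^{1-\theta}(\sqrt n)^\theta}=n^{\theta/2},\]
matching the upper bound.

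For the semi-normalized weakly null statement, a standard gliding-hump extraction in the Hilbert space $\ell_2\cong R$ produces an infinite $M\subset\N$ and a decomposition $x_m=\alpha_m u_m+r_m$ for $m\in M$, with $(u_m)_{m\in M}$ orthonormal in $\ell_2$, $|\alpha_m|\geq c/2$ (where $c=\inf_m\|x_m\|$), and $\sum_{m\in M}\|r_m\|$ as small as desired. Since the first-part computation only used orthonormality of the $e_{1,m_i}$ (and not the fact that they are standard basis vectors), the identical argument applied to the column built from $(\alpha_{m_i}u_{m_i})_{i\leq n}$ yields norm at least $(c/2)n^{\theta/2}$. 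The triangle inequality then absorbs the column built from $(r_{m_i})_{i\leq n}$---whose norm is controlled uniformly in $n$ by $\sum_m\|r_m\|$---giving the desired lower bound $\geq Dn^{\theta/2}$ with $D$ depending only on $c$. The main obstacle in the whole argument is the lower-bound computation, and specifically the truncation lemma for the $\M_n(R^\mathrm{op})$ norm: the $\M_n(R)$ side reduces cleanly by restricting input vectors to the first coordinate, whereas the $\M_n(R^\mathrm{op})$ side involves the transpose, and one must verify that adding entries in columns other than the first can only contribute nonnegative terms to the squared norm $\|y^T v\|^2$.
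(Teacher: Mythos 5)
Your proof is correct and, for the main identity, follows essentially the same route as the paper: compute the endpoint norms $\|b\|_{\M_n(R)}=1$ and $\|b\|_{\M_n(R^{\mathrm{op}})}=n^{1/2}$, get the upper bound from log-convexity of the interpolation norm, and get the matching lower bound by testing against a functional with $\phi(b)=n$, $\|\phi\|_{\M_n(R)^*}=n$ and $\|\phi\|_{\M_n(R^{\mathrm{op}})^*}=n^{1/2}$ (your trace functional $\tr(\eta\,\cdot)$ is the same functional the paper writes as $\sum_j\langle\,\cdot\,\xi_j,\zeta_j\rangle$). The one methodological difference in this half is that you run the three-lines lemma directly on $z\mapsto\phi(F(z))$, whereas the paper invokes the duality theorem $(\M_n(R),\M_n(R^{\mathrm{op}}))_\theta^*\equiv(\M_n(R)^*,\M_n(R^{\mathrm{op}})^*)_\theta$ (which needs reflexivity of $\M_n(R)$); your version is marginally more self-contained, the paper's is shorter to state. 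For the ``in particular'' part the routes genuinely diverge: the paper passes to a subsequence equivalent to $(e_{1,m})_m$ and uses homogeneity of $R$ and $R^{\mathrm{op}}$ to transport the estimate through a complete isomorphism of the interpolation spaces, while you use a gliding-hump decomposition $x_m=\alpha_m u_m+r_m$ with $(u_m)$ orthonormal and absorb the error column by the triangle inequality (each single-entry matrix having interpolation norm at most $\|r_m\|_{\ell_2}$). Your variant buys concreteness --- it only needs the observation that the exact computation depends on the $e_{1,m_i}$ solely through orthonormality, which follows from homogeneity or from rerunning the duality argument with $\eta$ adapted to the $u_{m_i}$ (and the positivity of the $\alpha_{m_i}$ keeps $\phi$ from cancelling) --- at the cost of the truncation lemma you correctly flag as the delicate step on the $R^{\mathrm{op}}$ side; that step is fine, since zeroing out all columns but the first is a compression and hence contractive on both endpoint spaces.
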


\begin{proof}
Without loss of generality, let $m_i=i$ for each $i\in \{1,\ldots,n\}$. Let $b\in  \M_n((R,R^\mathrm{op})_\theta)$ denote  the first matrix in  the statement of the lemma. Clearly,  $\|b\|_{\M_n(R)}=1$ and $\|b\|_{\M_n( R^\mathrm{op}) }=n^{1/2}$. Therefore,  since  \[\|b\|_{\M_n((R,R^\mathrm{op})_\theta)}\leq \|b\|^{1-\theta}_{\M_n(R)}\|b\|^{\theta}_{\M_n( R^\mathrm{op}) } \]  (see  \cite[Section 1.2.30]{BlecherLeMerdy2004}), we have that $\|b\|_{\M_n((R,R^\mathrm{op})_\theta)} \leq n^{\theta/2}$. So we are left to show the opposite inequality.

 Let $(e_j)_j$ be the standard unit basis of $\ell_2$. For each $j\in \{1,\ldots, n\}$, let $\xi_j=(\xi_{j}(i))_{i=1}^n\in \ell_2^{\oplus n}$ be the vector whose first coordinate is $e_j$ and all others are zero, and let  $\zeta_j=(\zeta_{j}(i))_{i=1}^n \in \ell_2^{\oplus n}$ be the vector whose $j$-th  coordinate is $e_1$  and all others are zero. Let  $f\colon \M_n(R)\to \C$ be given by $f(a)=\sum_{j=1}^n\langle a\xi_j, \zeta_j \rangle$ for all $a\in \M_n(R)$. So,     $f(b)=n$. The functional $f$ can be seen as an element in both $\M_n(R)^*$ and $\M_n(R^\mathrm{op})^*$. Considering $f$ as a functional on $\M_n(R)$, it is immediate that  $\|f\|_{\M_n(R)^*}\leq  n$ and, as $\|b\|_{\M_n(R)}=1$,   $\|f\|_{\M_n(R)^*}= n$. On the other hand, considering $f$ as a functional in $\M_n(R^{\mathrm{op}})^*$, an application of Cauchy-Schwarz inequality gives that $\|f\|_{\M_n(R^\mathrm{op})^*}\leq n^{1/2}$ and, as  $\|b\|_{\M_n(R^\mathrm{op})}=n^{1/2}$, we have that   $\|f\|_{\M_n(R^{\mathrm{op}})^*}=  n^{1/2}$. 
  
  By definition,   $\M_n((R,R^\mathrm{op})_\theta)=(\M_n(R),\M_n(R^\mathrm{op}))_\theta$ isometrically.  Therefore, since $\M_n(R)$ is reflexive (\cite[Theorem 2.5.1]{Pisier-OS-book}),  we have that \[(\M_n(R),\M_n(R^\mathrm{op}))_\theta^*\equiv(\M_n(R)^*,\M_n(R^\mathrm{op})^*)_\theta\] (\cite[Corollary 4.5.2]{BerghLofstromBook}). So,    \[\|f\|_{\M_n((R,R^\mathrm{op})_\theta)^*}\leq \|f\|_{\M_n(R )^*}^{1-\theta}\|f\|_{\M_n(R^{\mathrm{op}})^*}^\theta\leq  n^{1-\theta/2}\] (\cite[Theorem 4.1.2]{BerghLofstromBook}) and, as $f(b)=n$, this gives us that $\|b\|_{\M_n((R,R^\mathrm{op})_\theta)}\geq n^{\theta/2}$. 
  
 For the last statement, let  $(x_m)_m$ be a semi-normalized    weakly null sequence. By going to a subsequence if necessary, we can assume that $(x_m)_m$ is equivalent to $(e_{1,m})_m$. Let $E=\overline{\mathrm{span}}\{x_m\mid m\in\N\}$ as a Banach space, so the   linear operator $T\colon E\to R$ which sends each $x_m$ to $e_{1,m}$ is  an isomorphism. Let $E_1$ and $E_2$ denote   $E$  endowed with the operator space structure induced by the subset inclusions $E_1\subset R$ and   $E_2\subset R^{\mathrm{op}}$, respectively. Since $R$ is a homogeneous\footnote{An operator space $X$ is \emph{homogeneous} if $\|u\|_{\cb}=\|u\|$ for any linear operator $u:X\to X$.} Hilbertian space,  $E_1$ is completely isometric to $R$ (\cite[Proposition 9.2.1]{Pisier-OS-book}). Therefore, $E_1$ is also homogeneous, so $T\colon E_1\to R$ is a complete isomorphism. Analogously,   $T\colon E_2\to R^{\mathrm{op}}$ is also a complete isomorphism.  Therefore,  $T\colon (E_1,E_2)_\theta\to (R,R^{\mathrm{op}})_\theta$ is a complete isomorphism. The result then follows since $\|b\|\geq n^{\theta/2}$.
\end{proof}

\begin{proof}[Proof of Theorem \ref{ThmRandCnotEmbInterpolation}]
Suppose $\theta<\gamma$.  To simplify notation, let $X=  (R,R^{\mathrm{op}})_\theta $ and $Y= (R,R^{\mathrm{op}})_\gamma $. Suppose $(f^n\colon n\cdot B_X\to Y)_n$ is an almost complete coarse embedding of bounded subsets, so $\bar f=(f^n_n\colon n\cdot B_{\M_n(X)}\to \M_n(Y))_n$ is an equi-coarse embedding. Pick $r>0$ such that $\rho_{\bar f}(r)>0$ and, to  simplify notation, for each $n\in\N$, let $\alpha_n=rn^{\theta/2}$ and  $g^n=f^{\alpha_n}$.     

Fix $n\in\N$. By Rosenthal's $\ell_1$-theorem (see \cite[The Main Theorem]{Rosenthal1974PNAS}), going to a subsequence if necessary, we can assume that $(g^n(re_{1,j}))_j$ is weakly Cauchy, so $(g^n(re_{1,2j-1})-g^n(re_{1,2j}))_j$ is weakly null. Moreover, $(g^n(re_{1,2j-1})-g^n(re_{1,2j}))_j$  is semi-normalized. Indeed, since $\| e_{1,2j-1}- e_{1,2j}\|_X= \sqrt{2}$, we have that 
  \begin{align*}
  \| 
g^n(re_{1,2j-1})-g^n(re_{1,2j}) \|_{Y}  
  \geq \rho_{\bar f}(\|re_{1,2j-1}- re_{1,2j}\|_{X}) =\rho_{\bar f}(\sqrt{2}r).
\end{align*}
So, $(g^n(re_{1,2j-1})-g^n(re_{1,2j}))_j$ is semi-normalized and its lower bound does not depend on $n$. Therefore,  going to a subsequence if necessary,  there is $D>0$ (which does not depend on $n$) such that 
\begin{align*}
  \left\|\begin{bmatrix}
g^n(re_{1,1})-g^n(re_{1,2})&0&\ldots &0\\
g^n(re_{1,3})-g^n(re_{1,4})&0&\ldots &0\\ 
\vdots& \vdots &\ddots&\vdots\\
g^n(re_{1,2n-1})-g^n(re_{1,2n})&0&\ldots &0
 \end{bmatrix}\right\|_{\M_{n}(Y)}\geq Dn^{\gamma/2}.
\end{align*}
  (see Lemma \ref{LemmaLowerBoundInterpolation}).

For  each $j\in \{1,\ldots,n\}$,  let $a_{n,j}\in \M_{n}(X)$  be the operator whose $(j,1)$-coordinate is $ re_{1,2j-1}$  and all other coordinates are zero, and let $b_{n,j}\in \M_{n}(X)$  be the operator whose $(j,1)$-coordinate is $ re_{1,2j}$  and all other coordinates are zero; so,    $\|a_{n,j}\|_{\M_n(X)}=\|b_{n,j}\|_{\M_n(X)}=r$. Therefore,   
\begin{align*} \Big\|\sum_{j=1}^n\big(g^{n}_n(a_{n,j})-&g^{n}_n(b_{n,j})\big) \Big\|_{\M_{n}(Y)}\\
&=  \left\|\begin{bmatrix}
g^n(re_{1,1})-g^n(re_{1,2})&0&\ldots &0\\
g^n(re_{1,3})-g^n(re_{1,4})&0&\ldots &0\\ 
\vdots& \vdots &\ddots&\vdots\\
g^n(re_{1,2n-1})-g^n(re_{1,2n})&0&\ldots &0
 \end{bmatrix}\right\|_{\M_{n}(Y)}\\
 &\geq D n^{\gamma/2}.
 \end{align*}

Let $c_n\in \M_n(X)$ be the operator whose  $(j,1)$-coordinate is $re_{1,2j-1}$, for all $j\in \{1,\ldots, n\}$,   and all other coordinates are zero, and let $d_n\in \M_n(X)$ be the operator whose  $(j,1)$-coordinate is $re_{1,2j}$, for all $j\in \{1,\ldots, n\}$,   and all other coordinates are zero. By Lemma \ref{LemmaLowerBoundInterpolation},   $\|c_n\|_{\M_n(X)}=\|d_n\|_{\M_n(X)}=rn^{\theta/2}$.  In particular, $\|c_n-d_n\|_{\M_n(X)}\leq 2rn^{\theta/2}$.

Since  $\omega_{\bar f}$ is bounded above by an affine map (see Remark \ref{RemarkAffineBound}), pick   $L>0$ such that  $\omega_{\bar f}(t)=Lt+L$ for all $t >0$.  Then, as each $ c_n,d_n,a_{n,j},b_{n,j}\in \alpha_n\cdot   B_{\M_{n}(X)}$, we have 
\begin{align*}  
\Big\|\sum_{j=1}^n\big(g^{n}_n(a_{n,j})-& g^{n}_n(b_{n,j})\big)\Big\|_{\M_{n}(Y)}\\
&=
\left\|\begin{bmatrix}
g^{n}(re_{1,1})-g^{n}(re_{1,2})&0&\ldots &0\\
g^{n}(re_{1,3})-g^{n}(re_{1,4})&0&\ldots &0\\
 \vdots&\vdots  &\ddots &\vdots \\
g^{n}(re_{1,2n-1})-g^{n}(re_{1,2n})&0&\ldots &0\\
\end{bmatrix}\right\|_{\M_{n}(Y)}\\
&= \|g^{n}_n(c_n)-g^{n}_n(d_n)\|_{\M_{n}(Y)}\\
&\leq \omega_{\bar f}\big(\|c_n-d_n\|_{\M_{n}(X)}\big)\\
&\leq 2L   r n^{\theta/2}+L.
 \end{align*}

As $n\in\N$ was arbitrary, we conclude that  \[Dn^{\gamma/2}\leq 2L   r n^{\theta/2}+L\] for all $n\in\N$; contradiction, since $\theta<\gamma$. So, the bounded subsets of $(R,R^{\mathrm{op}})_\theta$ do  not almost  completely coarsely embed into  $(R,R^{\mathrm{op}})_\gamma$.

If $\theta>\gamma$, then  $1-\theta<1-\gamma$ and   an analogous argument gives us that the bounded subsets of $(R^{\mathrm{op}},R)_{1-\theta}$ do  not almost   completely coarsely embed into  $(R^{\mathrm{op}},R)_{1-\gamma}$. Therefore, as   $(R,R^{\mathrm{op}})_\theta\equiv(R^{\mathrm{op}},R)_{1-\theta}$ and  $(R,R^{\mathrm{op}})_\gamma\equiv(R^{\mathrm{op}},R)_{1-\gamma}$ (see \cite[Theorem 4.2.1]{BerghLofstromBook}), we are done.
\end{proof}

\section{Almost complete nonlinear equivalences of bounded subsets}\label{SectionSomeEmb}
 
In \cite{Kalton2012MathAnnalen}, N. Kalton presented a remarkable method of constructing nonisomorphic   Banach spaces which are coarsely/uniformly equivalent to each other. In a nutshell, his method consists of the following: given a quotient map $Q\colon Y\to X$ between Banach spaces, N. Kalton  constructs a Banach space $\cZ(Q)$ and a quotient map $\tilde Q\colon \cZ(Q)\to X$ such that $\cZ(Q)$   is isomorphic to the $\ell_1$-sum of isomorphic copies of $Y$ and  $\cZ(Q)$ is coarsely equivalent to $X\oplus \ker(\tilde Q)$ (\cite[Section 8]{Kalton2012MathAnnalen}). Choosing   appropriate quotient maps   $Q\colon Y\to X$, this method gives us many interesting examples. For instance: 
\begin{enumerate}
\item If $Q$ is a quotient of $\ell_1$ onto $c_0$, then it is immediate that  $c_0 $ does not isomorphically embeds into $\cZ(Q)$. Indeed, being an $\ell_1$-sum of Schur spaces,\footnote{Recall, a Banach space $X$ is \emph{Schur}, or has the \emph{Schur property}, if every weakly convergent sequence in $X$  is  norm convergent.} $\cZ(Q)$ is also Schur, but $c_0$ is not.
\item Moreover, if $Q$ is a quotient of $\ell_1$ onto $c_0$, then $c_0\oplus \ker(\tilde Q)$ is not uniformly equivalent\footnote{Two metric spaces $X$ and $Y$ are \emph{uniformly equivalent} if there is a bijection $f\colon X\to Y$ such that both $f$ and $f^{-1}$ are uniformly continuous.} to $\cZ(Q)$ (\cite[Theorem 8.9]{Kalton2012MathAnnalen}).\footnote{\cite[Theorem 8.9]{Kalton2012MathAnnalen} provides the first example of coarsely equivalent Banach spaces which are not uniformly equivalent. N. Kalton's result is actually much stronger:  $c_0$ does not even coarsely Lipschitzly embed into $\cZ(Q)$ by a uniformly continuous map.}
\item There exist   separable $\mathcal L_1$-subspaces  of $\ell_1$ which are uniformly equivalent   but not linearly isomorphic (\cite[Theorem 8.6]{Kalton2012MathAnnalen}).
\end{enumerate}
In this section, we   revisit this  method in the context of operator spaces and   use it in order to  obtain  Theorem \ref{ThmSepBoundEquivSpacesEQUIV2SEPARABLE} and \ref{ThmSepBoundEquivSpacesEQUIV2}.

Let $X$ and $Y$ be operator spaces. Following \cite{Kalton2012MathAnnalen}, we denote by $\cH(X,Y)$ the space of all \emph{positively homogeneous} functions $f\colon X\to Y$, i.e., \[f(\alpha x)=\alpha f(x) \text{ for all }   x\in X \text{ and all }\alpha\geq 0,\] such that \[\|f\|\coloneqq\big\{|f(x)|\mid x\in B_X\big\}<\infty.\]
Given $\eps>0$ and $f\in \cH(X,Y)$, we denote the infimum of all $L>0$ such that 
\[\|f(x)-f(y)\|\leq L\max\big\{\|x-y\|,\eps\|x\|,\eps\|y\|\big\}\text{ for all }x,y\in X\]
by   $\|f \|^\eps$.\footnote{This was used in the context of Banach space theory in \cite{Kalton2012MathAnnalen} but with   notation $\|\cdot\|_\eps$. In order to avoid over usage of lower indices, we opted for an upper index here.}   One can check that $\|\cdot\|^\eps$ is a Banach norm on $\cH(X,Y)$. Moreover, notice that if $k\in\N$, then $f_k\in \cH(\M_k(X),\M_k(Y))$, so $\|f_k\|^\eps$ is well defined. 

Elements in $\cH(X,Y)$ do not need to be coarse. However, they are clearly so on bounded sets and we can actually obtain some quantitative estimates. Precisely, this is the content of our next lemma, which is  inspired by the proof of \cite[Lemma 7.4]{Kalton2012MathAnnalen} (see Lemma \ref{LemmaKaltonGluing} below for an actual version of  \cite[Lemma 7.4]{Kalton2012MathAnnalen} to our setting).
 
 \begin{lemma}\label{LemmaHomogenousCoarseEst}
Let $X$ and $Y$ be operator spaces, $k\in\N$, $t,K>0$, and let $f\in \cH(X,Y)$ be such that   $\|f_k\|^{e^{-t}}\leq K$. Then, for any $r,s>0$, we have that  
\[\|f_k(\bar x)-f_k(\bar y)\|_{\M_k(Y)}\leq K\|\bar x-\bar y\|_{\M_k(X)}+K(r+s)
\]
 for all $\bar x,\bar y\in (re^t+s)\cdot   B_{\M_k(X)}$.
\end{lemma}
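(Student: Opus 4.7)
The plan is to derive the bound directly by unpacking the definition of $\|f_k\|^{e^{-t}}$ and exploiting the fact that, on the ball $(re^t+s)\cdot B_{\M_k(X)}$, the scale-sensitive ``penalty'' terms $e^{-t}\|\bar x\|_{\M_k(X)}$ and $e^{-t}\|\bar y\|_{\M_k(X)}$ are a priori controlled. First, since $\|f_k\|^{e^{-t}}\leq K$ and the set of admissible constants in the definition of $\|\cdot\|^{e^{-t}}$ is upper-closed and closed under pointwise limits in $L$, the infimum is attained, so
\[\|f_k(\bar x)-f_k(\bar y)\|_{\M_k(Y)}\leq K\max\bigl\{\|\bar x-\bar y\|_{\M_k(X)},\ e^{-t}\|\bar x\|_{\M_k(X)},\ e^{-t}\|\bar y\|_{\M_k(X)}\bigr\}\]
for all $\bar x,\bar y\in \M_k(X)$.

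Next, I would plug in the hypothesis $\bar x,\bar y\in(re^t+s)\cdot B_{\M_k(X)}$: since $\|\bar x\|_{\M_k(X)},\|\bar y\|_{\M_k(X)}\leq re^t+s$, multiplying through by $e^{-t}$ yields
\[e^{-t}\|\bar x\|_{\M_k(X)},\ e^{-t}\|\bar y\|_{\M_k(X)}\leq r+se^{-t}\leq r+s.\]
Combining this with the previous display and using the trivial inequality $\max\{a,b\}\leq a+b$ for nonnegative reals, I obtain
\[\|f_k(\bar x)-f_k(\bar y)\|_{\M_k(Y)}\leq K\max\bigl\{\|\bar x-\bar y\|_{\M_k(X)},\ r+s\bigr\}\leq K\|\bar x-\bar y\|_{\M_k(X)}+K(r+s),\]
which is exactly the claimed inequality.

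There is no real obstacle here: the lemma is essentially a recalibration of the seminorm $\|\cdot\|^{e^{-t}}$ from its scale-sensitive form to a coarse Lipschitz bound on the ball of radius $re^t+s$. The radii are chosen precisely so that the multiplicative slack $e^{-t}$ converts the ambient bound on $\|\bar x\|_{\M_k(X)},\|\bar y\|_{\M_k(X)}$ into the additive slack $K(r+s)$ in the coarse Lipschitz conclusion; this calibration is presumably what makes the estimate useful for the gluing arguments of the next lemma.
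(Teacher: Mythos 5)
Your proof is correct and follows essentially the same route as the paper: unpack the definition of $\|f_k\|^{e^{-t}}\leq K$, bound the terms $e^{-t}\|\bar x\|$, $e^{-t}\|\bar y\|$ by $r+se^{-t}\leq r+s$ on the given ball, and pass from the max to the sum. The paper merely streamlines this by assuming without loss of generality that $\|\bar x\|\geq\|\bar y\|$ so that only one norm term appears in the max; your remark about the infimum in the definition of $\|\cdot\|^{e^{-t}}$ being attained is a harmless (and correct) extra observation.
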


\begin{proof} 
Let $\bar x,\bar y\in (re^t+s)\cdot   B_{\M_k(X)}$ and suppose $\|\bar x\|\geq \|\bar y\|$. Then, since $\|f_k\|^{e^{- t}}\leq K$ and $\|\bar x\|\leq re^t+s$, we have 
\begin{align*}
\|f_k(\bar x)-f_k(\bar y)\|_{\M_k(Y)}&\leq K\max\Big\{\|\bar x-\bar y\|_{\M_k(X)},e^{-t}\|\bar x\|_{\M_k(X)}\Big\}\\
&\leq  K \|\bar x-\bar y\|_{\M_k(X)}+K(s+r).
\end{align*}
\end{proof}

Recall,    a bounded linear map $Q\colon X\to Y$ between Banach spaces is a \emph{quotient map} if there is $\delta>0$ for which $Q(B_X)$ contains $\delta\cdot  B_Y$; if it is necessary to emphasize  $\delta$, we say $Q$ is a  \emph{$\delta$-quotient map}. In particular, $Q$ is surjective.  If $X$ and $Y$ are operator spaces, in order not to cause confusion between quotient and \emph{complete} quotient maps,\footnote{A completely bounded map $Q\colon Y\to X$ between operator spaces is a \emph{complete quotient} if there is $\delta>0$ such that each of its amplifications is a  $\delta$-quotient.} we refer to quotient maps   $ Y\to X$ as \emph{Banach} quotient maps.

Consider operator spaces $X$ and $Y$, and a completely bounded map $Q\colon Y\to X$ which is also a Banach quotient. For each $m\in\N$,  we define a norm  $\|\cdot\|_{Y_{m}}$ on  $Y$   by letting 
\[\|y\|_{Y_m}=\max\big\{2^{-m}\|y\|,\|Q(y)\|\big\}   \text{ for all } x\in Y.\]
We denote by $Y_m$ the Banach space consisting of $Y$ endowed with the equivalent norm $\|\cdot\|_{Y_m}$. Moreover,  we endow $Y_m$ with the operator space structure given by  
\[\|[y_{ij}]\|_{\M_k(Y_m)}=\max\Big\{2^{-m}\|[y_{ij}]\|_{\M_k(Y)},\|[Q(y_{ij})]\|_{\M_k(X)}\Big\}, \] for all $ k\in\N$, and all $[y_{ij}]\in\M_k( Y)$. It follows straightforwardly from Ruan's theorem that these norms induce an operator space structure on $Y_m$ (see \cite[Section 2.2]{Pisier-OS-book}). Moreover, each $Y_m$  is completely isomorphic to $Y$ (but not isometric).  

We define \[\cZ(Q\colon Y\to X)=\big(\bigoplus_n Y_m\big)_{\ell_1},\] i.e., $\cZ(Q\colon Y\to X)$ is the $\ell_1$-sum of $(Y_m)_m$ (see \cite[Section 2.6]{Pisier-OS-book} for definitions). For simplicity, we  simply write $\cZ(Q)=\cZ(Q\colon Y\to X)$. For each $m\in\N$, there is a canonical complete linear isometric embedding $i_m\colon Y_m\hookrightarrow  \cZ(Q)$. By the universal property of $\ell_1$-direct sums, there is a completely bounded  map   \[\tilde Q\colon \cZ(Q)\to X\]  such that $\tilde Q\circ i_m=Q$ for all $m\in\N$ (see \cite[Subsection 1.4.13]{BlecherLeMerdy2004}). Clearly, $\tilde Q$ is also a Banach quotient map.

Recall, given a Banach  quotient map $Q\colon Y\to X$, a map $f\colon X\to Y$ is called a  \emph{section of $Q$} if $Q\circ f=\mathrm{Id}_X$.

\begin{lemma}\label{LemmaSections}
Let  $Q\colon Y\to X$ be a completely bounded map which is a Banach quotient  and let $\tilde Q\colon \cZ(Q)\to X$ be given as above. Then,   for each $k\in\N$ and each $\eps>0$, $\tilde Q$ admits  a section $f\in \cH(X,\cZ(Q))$ such that $\|f_k\|^\eps\leq 1$.
\end{lemma}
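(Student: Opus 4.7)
The key feature of the target space is the reweighted norms $\|y\|_{Y_m}=\max\{2^{-m}\|y\|,\|Q(y)\|\}$, which attenuate the $Y$-norm by the factor $2^{-m}$. My plan is therefore to construct a positively homogeneous section $s\colon X\to Y$ of $Q$ with a linear bound $\|s(x)\|\leq C\|x\|$, and then to embed $s$ into a single summand $Y_m$ of $\cZ(Q)$ for an index $m=m(k,\eps)$ chosen large enough that the attenuated $\|s\|_Y$-contribution drops below the prescribed tolerance $\eps$, leaving $\|Q\circ s\|_X=\|\cdot\|_X$ as the dominant term. Crucially, $m$ is allowed to depend on both $k$ and $\eps$.

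\textbf{Construction of $s$.} Let $\delta>0$ be such that $\delta\cdot B_X\subseteq Q(B_Y)$. Using the axiom of choice, for each $x\in\partial B_X$ pick some $y_x\in Y$ with $Q(y_x)=x$ and $\|y_x\|\leq \delta^{-1}$, and define $s(0)=0$ and $s(\alpha x)=\alpha y_x$ for all $\alpha\geq 0$ and all $x\in\partial B_X$. Then $s$ is positively homogeneous, satisfies $Q\circ s=\mathrm{Id}_X$, and obeys $\|s(z)\|\leq C\|z\|$ with $C=\delta^{-1}$. Since the amplification $s_k$ is defined entrywise, the elementary bound $\|[y_{ij}]\|_{\M_k(Y)}\leq k\max_{i,j}\|y_{ij}\|$, together with $\max_{i,j}\|x_{ij}\|\leq \|\bar x\|_{\M_k(X)}$, yields
\[\|s_k(\bar x)\|_{\M_k(Y)}\leq k\max_{i,j}\|s(x_{ij})\|\leq kC\|\bar x\|_{\M_k(X)}.\]

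\textbf{Choice of $m$ and verification.} Fix $m\in \N$ with $2^{-m+1} kC\leq \eps$, and set $f=i_m\circ s\colon X\to\cZ(Q)$. Positive homogeneity of $f$ is inherited from $s$ and the linearity of $i_m$; the identity $\tilde Q\circ f=Q\circ s=\mathrm{Id}_X$ shows that $f$ is a section of $\tilde Q$; and for $x\in B_X$ we have $\|f(x)\|_{\cZ(Q)}=\|s(x)\|_{Y_m}\leq \max\{1,2^{-m}C\}$, so $f\in\cH(X,\cZ(Q))$. For the critical estimate, since $i_m$ is a complete linear isometric embedding and $Q_k\circ s_k=\mathrm{Id}_{\M_k(X)}$,
\[\|f_k(\bar x)-f_k(\bar y)\|_{\M_k(\cZ(Q))}=\max\Big\{2^{-m}\|s_k(\bar x)-s_k(\bar y)\|_{\M_k(Y)},\,\|\bar x-\bar y\|_{\M_k(X)}\Big\}.\]
The triangle inequality combined with the amplification bound of the previous step shows $2^{-m}\|s_k(\bar x)-s_k(\bar y)\|_{\M_k(Y)}\leq 2^{-m}\cdot 2kC\max\{\|\bar x\|,\|\bar y\|\}\leq \eps\max\{\|\bar x\|,\|\bar y\|\}$, hence $\|f_k(\bar x)-f_k(\bar y)\|\leq \max\{\|\bar x-\bar y\|,\eps\|\bar x\|,\eps\|\bar y\|\}$, which gives $\|f_k\|^\eps\leq 1$.

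\textbf{Obstacle.} There is no deep obstacle here: the construction is essentially dictated by the definition of $\cZ(Q)$ and the need for positive homogeneity. The only genuinely subtle point is that $m$ \emph{must} be allowed to depend on $k$ (indeed, it scales like $\log_2 k$), reflecting the fact that the operator norm on $\M_k(Y)$ can inflate the entrywise section bound by a factor of order $k$. Since the statement quantifies over $k$ and $\eps$ and asks only for the existence of some section per such pair, this dependence causes no difficulty, and in fact fits naturally with the way this lemma will be used downstream (with different choices of $m$ assembled across $k$).
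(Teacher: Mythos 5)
Your proof is correct and follows essentially the same route as the paper's: build a positively homogeneous section of $Q$ bounded by $C\|x\|$ via a choice on the unit sphere, push it into a single summand $Y_m$ with $m$ chosen (depending on $k$ and $\eps$) so that the $2^{-m}$-attenuated $\M_k(Y)$-term is dominated by $\eps\max\{\|\bar x\|,\|\bar y\|\}$, and use $Q_k\circ s_k=\mathrm{Id}$ to identify the other term of the $\M_k(Y_m)$-norm with $\|\bar x-\bar y\|_{\M_k(X)}$. The only cosmetic difference is your sharper entrywise estimate $\|[y_{ij}]\|_{\M_k(Y)}\leq k\max_{i,j}\|y_{ij}\|$ where the paper uses the cruder constant $k^2$; both suffice.
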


\begin{proof}
Fix $k\in\N$ and $\eps>0$. As $Q$ is a quotient map, there is $C>0$ and an assignment      $f\colon  \partial B_X\to    Y$ such that $\|f(x)\|\leq C$ and $Q(f(x))=x$ for all $x\in \partial B_X$. We extend $f$ to the whole $X$ by letting $f(0)=0$ and $f(x)=\|x\|f(x/\|x\|)$ for all $x\in X\setminus \{0\}$. So $f$ is a section of $Q$ in $\cH(X,Y)$. Pick $m\in \N$ such that $ 2^{-m+1}Ck^2\leq\eps$. Then, if $[x_{ij}],[y_{ij}]\in \M_k(X)$, we have that 
\begin{align*}
\|&[f(x_{ij})-  f(y_{ij})]\|_{\M_k(Y_m)}\\
&=\max\{2^{-m}\|[f(x_{ij})-f(y_{ij})]\|_{\M_k(Y)},\|[x_{ij}-y_{ij}]\|_{\M_k(X)}\}\\
&\leq \max\{2^{-m+1}\|[f(x_{ij})]\|_{\M_k(Y)},2^{-m+1}\|[f(y_{ij})]\|_{\M_k(Y)}, \|[x_{ij}-y_{ij}]\|_{\M_k(X)}\}\\
&\leq \max\{2^{-m+1}Ck^2\|[x_{ij}]\|_{\M_k(X)},2^{-m+1}Ck^2\|[y_{ij}]\|_{\M_k(X)},\|[x_{ij}-y_{ij}]\|_{\M_k(X)}\}\\
&\leq \max\{\eps\|[x_{ij}]\|_{\M_k(X)},\eps\|[y_{ij}]\|_{\M_k(X)},\|[x_{ij}-y_{ij}]\|_{\M_k(X)}\}.
\end{align*}
Therefore, considering $f$ as a map $X\to Y_m$ and considering the canonical (completely isometric) inclusion $Y_m\hookrightarrow \cZ(Q)$, we obtain that the map $f\colon X\to \cZ(Q)$ satisfies $\|f_k\|^\eps\leq 1$.  
\end{proof}

We   now gather the previous results and present a (nontrivial) method to obtain operator spaces which have   almost completely coarse Lipschitzly equivalent  bounded subsets (cf. \cite[Proposition 8.4]{Kalton2012MathAnnalen}).

\begin{theorem}\label{ThmKaltonMethod}
Let $X$ and $Y$ be operator spaces,  $Q\colon Y\to X$ be a completely bounded map which is also a Banach quotient, and let $\tilde Q\colon \cZ(Q)\to X$ be as above.   Then, the bounded subsets of  $\cZ(Q)$  and $X\oplus \ker(\tilde Q)$ are  almost completely coarse Lipschitzly equivalent.
\end{theorem}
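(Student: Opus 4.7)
The plan is to adapt Kalton's construction \cite[Theorem 8.2]{Kalton2012MathAnnalen} directly, combining the two technical lemmas just established. Set $C = \max\{\|\tilde Q\|_{\cb},1\}$ and equip $X\oplus \ker(\tilde Q)$ with its $\ell_\infty$-direct sum operator space structure (where $\ker(\tilde Q)$ inherits the structure of a subspace of $\cZ(Q)$). For each $n\in\N$, choose $\eps_n>0$ small enough that setting $t_n=\log(Cn/\eps_n)$ allows Lemma \ref{LemmaHomogenousCoarseEst} (with $r=1$, $s=0$, $K=1$) to yield an affine bound on the ball $Cn\cdot B_{\M_n(X)}$; then apply Lemma \ref{LemmaSections} with $k=n$ and $\eps=\eps_n$ to obtain a positively homogeneous section $\sigma^n\in\cH(X,\cZ(Q))$ of $\tilde Q$ with $\|\sigma^n_n\|^{\eps_n}\leq 1$. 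Define the candidate bijections by
\[f^n\colon \cZ(Q)\to X\oplus \ker(\tilde Q),\qquad f^n(z)=\big(\tilde Q(z),\ z-\sigma^n(\tilde Q(z))\big),\]
with inverse $g^n(x,k)=\sigma^n(x)+k$. The codomain is correct since $\tilde Q(z-\sigma^n(\tilde Q(z)))=0$, and $g^n\circ f^n=\mathrm{Id}$, $f^n\circ g^n=\mathrm{Id}$ follow from $\tilde Q\circ\sigma^n=\mathrm{Id}_X$.

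By Lemma \ref{LemmaHomogenousCoarseEst} applied to the chosen $\sigma^n$, there is an inequality
\[\|\sigma^n_n(\bar u)-\sigma^n_n(\bar v)\|_{\M_n(\cZ(Q))}\leq \|\bar u-\bar v\|_{\M_n(X)}+1\qquad \text{for all } \bar u,\bar v\in Cn\cdot B_{\M_n(X)},\]
with constants independent of $n$. If $\bar z,\bar w\in n\cdot B_{\M_n(\cZ(Q))}$ then $\tilde Q_n(\bar z),\tilde Q_n(\bar w)\in Cn\cdot B_{\M_n(X)}$, and the triangle inequality together with this estimate gives
\[\|f^n_n(\bar z)-f^n_n(\bar w)\|_{\M_n(X\oplus\ker\tilde Q)}\leq (C+1)\|\bar z-\bar w\|_{\M_n(\cZ(Q))}+1,\]
yielding the upper bound for $(f^n_n\restriction_{n\cdot B})_n$. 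The matching lower bound comes from $g^n\circ f^n=\mathrm{Id}$: writing $\bar z-\bar w=\sigma^n_n(\tilde Q_n(\bar z))-\sigma^n_n(\tilde Q_n(\bar w))+(\bar z-\sigma^n_n(\tilde Q_n(\bar z)))-(\bar w-\sigma^n_n(\tilde Q_n(\bar w)))$ and applying the triangle inequality together with the affine estimate above produces
\[\|\bar z-\bar w\|\leq 2\|f^n_n(\bar z)-f^n_n(\bar w)\|+1.\]

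For $g^n$, the same affine estimate applied on $n\cdot B_{\M_n(X)}\subset Cn\cdot B_{\M_n(X)}$ gives $\|g^n_n(\bar a,\bar b)-g^n_n(\bar a',\bar b')\|\leq 2\|(\bar a,\bar b)-(\bar a',\bar b')\|+1$ on $n\cdot B_{\M_n(X\oplus\ker\tilde Q)}$, and the lower bound is obtained symmetrically from $f^n\circ g^n=\mathrm{Id}$. All constants are uniform in $n$, proving that both $(f^n_n\restriction_{n\cdot B})_n$ and $(g^n_n\restriction_{n\cdot B})_n$ are equi-coarse Lipschitz embeddings. The main obstacle is exactly the parameter calibration in the first step: Lemma \ref{LemmaHomogenousCoarseEst} requires $K=1$ on a ball whose radius scales linearly with $n$, so Lemma \ref{LemmaSections} must be invoked with $\eps_n$ shrinking fast enough in $n$ (corresponding to pushing the section into the summand $Y_m$ with very large $m$). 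Once this calibration is set, the rest is a routine propagation through the $\ell_\infty$-sum triangle inequality.
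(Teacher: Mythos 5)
Your proposal is correct and follows essentially the same route as the paper: the same bijections $z\mapsto(\tilde Q(z),\,z-\sigma(\tilde Q(z)))$ and $(x,k)\mapsto \sigma(x)+k$ built from the positively homogeneous sections of Lemma \ref{LemmaSections}, with the equi-coarse Lipschitz estimates coming from Lemma \ref{LemmaHomogenousCoarseEst} and the inverse relationship. The only (immaterial) differences are that the paper notes $\tilde Q$ is completely contractive (so your constant $C$ is $1$) and obtains the lower bounds by first showing the forward map sends the ball $e^k\cdot B$ into $(2e^k+1)\cdot B$ and then applying the upper bound of the inverse there, rather than your direct triangle-inequality expansion.
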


\begin{proof}
To simplify notation, let $Z=\ker(\tilde Q)$. For each $k\in\N$, let $f^{k}\colon X\to \cZ(Q)$  be the positively homogeneous section of $\tilde Q\colon  \cZ(Q)\to X$ given by Lemma \ref{LemmaSections} for $\eps=e^{-	k}$, so $\|f^k_k\|^{e^{-k}}\leq 1$. For each $k\in\N$,  define maps $g^{k}\colon  \cZ(Q)\to X\oplus Z$ and $h^{k}\colon  X\oplus Z\to  \cZ(Q)$ by letting \[g^{k}(  y)=\big(\tilde Q(  y),  y-f^{k}(\tilde Q(  y))\big)\text{ and }h^{k}(  x,  z)=  z+f^{k}(  x)\] for all $  y\in  \cZ(Q) $ and all $( x,  z)\in X\oplus Z$. Notice that, as each $f^{k}$ is a section of $\tilde Q$, then $g^{k}$ and $h^{k}$ are inverses of each other for all $k\in\N$. 

By Lemma \ref{LemmaHomogenousCoarseEst}, 
\begin{equation*}\label{EqCoarseEqui1}
\|f_k^{k }(\bar x)-f_k^{k }(\bar x')\|_{\M_k(Y)}\leq \|\bar x-\bar x'\|_{\M_k(X)}+1
\end{equation*} 
for all $k\in \N$  and all $\bar x,\bar x'\in e^k\cdot B_{\M_k(X)}$. Then, as $\tilde Q$ is completely contractive, the previous inequality and $g^k$'s formula imply that  
\[\|g_k^{k}(\bar y)-g_k^{k}(\bar z)\|_{\M_k(X\oplus Z)}\leq 2\|\bar y-\bar z\|_{\M_k(\cZ(Q))}+1\]
for all $k\in \N$  and all $\bar y,\bar z\in e^k\cdot B_{\M_k(\cZ(Q))}$. In particular, since $g^{k}(0)=0$, this implies that \[g_k^{k}\Big(e^k\cdot B_{\M_k(\cZ(Q))}\Big)\subset (2e^k+1)\cdot B_{\M_k(X\oplus Z)}\]
for all $k\in\N$. Therefore, using Lemma \ref{LemmaHomogenousCoarseEst} again, it is clear from $h^k$'s formula that 
\[\|h^{k}_k( \bar x, \bar  z)-h^{k}_k( \bar  x',\bar   z')\|_{\M_k(\cZ(Q))}\leq 2\|(\bar x,\bar z)-(\bar x',\bar z')\|_{\M_k(X\oplus Z)}+3\]
for all $k\in \N$ and all $(\bar x,\bar z),(\bar x',\bar z')\in (2e^k+1)\cdot B_{\M_k(X\oplus Z)}$. As $g^{k}$ and $h^{k}$ are inverses of each other, this implies that 
\[\|g_k^{k}(\bar y)-g_k^{k}(\bar z)\|_{\M_k(X\oplus Z)}\geq \frac{1}{2}\|\bar y-\bar z\|_{\M_k(\cZ(Q))}-\frac{3}{2}\]
for all $\bar y,\bar z\in e^k\cdot B_{\M_k(\cZ(Q))}$. Therefore, 

\[\Big(g_k^{k}\restriction_{ e^k\cdot B_{\M_k(\cZ(Q))}}\colon e^k\cdot B_{\M_k(\cZ(Q))}\to \M_{k}(X\oplus Z)\Big)_{k} \]
are  equi-coarse Lipschitz embeddings. Analogous calculations show that 
\[\Big(h_k^{k}\restriction_{e^k\cdot B_{\M_k(X\oplus Z)}}\colon e^k\cdot B_{\M_k(X\oplus Z)}\to \M_{k}(\cZ(Q))\Big)_{k} \] are also 
   equi-coarse Lipschitz embeddings. So,   we are done.
\end{proof}

\subsection{Applications of   Theorem  \ref{ThmKaltonMethod}}
Before the next corollary, we make a trivial observation: it is well known that every separable Banach space is a quotient of $\ell_1$ (\cite[Theorem 2.3.1]{AlbiacKaltonBook}). Hence, given any Banach space $X$, there is a completely bounded Banach quotient map $\MIN(\ell_1)\to \MIN(X)$. 

 The next corollary  shows that the notion of   almost complete coarse embeddability of bounded subsets overcomes issues \eqref{ItemI} and \eqref{ItemII}  from Section \ref{SectionIntro}.

\begin{corollary}[Theorem \ref{ThmSepBoundEquivSpacesEQUIV2SEPARABLE}]
Let $Q\colon \MIN(\ell_1)\to\MIN( c_0)$ be a completely bounded  map which is also a Banach quotient, and let $\tilde Q\colon \cZ(Q)\to X$ be as above.  Then  
\begin{enumerate}
\item\label{CorKaltonMethodMIN.Item2} $\MIN( c_0) $ does not  isomorphically embed  into  $\cZ(Q)$, and 
\item \label{CorKaltonMethodMIN.Item1} the bounded subsets of $\cZ(Q)$ and  $\MIN( c_0)\oplus \ker(\tilde Q)$  are  almost completely coarse Lispchitzly equivalent.
\end{enumerate}  \label{CorKaltonMethodMIN}
\end{corollary}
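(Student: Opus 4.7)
My plan is to handle the two items separately; item (2) will be immediate, while item (1) reduces to a Schur-property argument. For item (2), I would simply invoke Theorem \ref{ThmKaltonMethod} with $X=\MIN(c_0)$ and $Y=\MIN(\ell_1)$: the conclusion there is literally that the bounded subsets of $\cZ(Q)$ and $X\oplus\ker(\tilde Q)$ are almost completely coarse Lipschitzly equivalent. No further work is needed.

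For item (1), the strategy is to show that $\cZ(Q)$ has the Schur property as a Banach space while $c_0=\MIN(c_0)$ does not. Since a linear isomorphic embedding $\MIN(c_0)\hookrightarrow \cZ(Q)$ is the same as a Banach-space embedding $c_0\hookrightarrow \cZ(Q)$, and since the Schur property is inherited by closed subspaces, this will suffice. The first subtask is that each $Y_m$ is Schur: the definition $\|y\|_{Y_m}=\max\{2^{-m}\|y\|_Y,\|Q(y)\|_X\}$ gives an equivalent renorming of the $\ell_1$-norm on $Y=\MIN(\ell_1)$ (bounded above by $\max\{2^{-m},\|Q\|\}\|y\|_Y$ using boundedness of $Q$, and below by $2^{-m}\|y\|_Y$ directly), so $Y_m$ is Banach-isomorphic to $\ell_1$ and therefore Schur, the Schur property being a Banach isomorphic invariant.

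The second subtask is that $\cZ(Q)$, whose underlying Banach space is the ordinary $\ell_1$-direct sum $(\bigoplus_m Y_m)_{\ell_1}$, inherits the Schur property from its summands. This is the only step I expect to require real work, and I regard it as the main (though classical) obstacle. The standard gliding-hump argument should suffice: a putative bounded weakly null sequence $(z^k)_k$ with $\|z^k\|\geq\delta>0$ would have each coordinate sequence $(z^k_m)_k$ weakly null in $Y_m$ and hence norm null by the Schur property of $Y_m$; inductively truncating tails and waiting for each initial block of coordinates to become small then extracts a subsequence essentially supported on disjoint blocks of indices $m$, which would be $(1+o(1))$-equivalent to the canonical $\ell_1$-basis, contradicting weak nullity.

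With $\cZ(Q)$ Schur in hand, item (1) follows at once: $c_0$ fails the Schur property (the standard unit vector basis is weakly null but not norm null), and any isomorphic embedding $c_0\hookrightarrow \cZ(Q)$ would transport this failure into $\cZ(Q)$, a contradiction. Apart from the gliding-hump verification, every step is either definitional or a direct invocation of Theorem \ref{ThmKaltonMethod}.
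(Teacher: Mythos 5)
Your proposal is correct and follows essentially the same route as the paper: the equivalence of bounded subsets is a direct application of Theorem \ref{ThmKaltonMethod}, and the non-embeddability of $\MIN(c_0)$ follows because $\cZ(Q)$, being an $\ell_1$-sum of spaces Banach-isomorphic to $\ell_1$, has the Schur property while $c_0$ does not. The only cosmetic difference is that you sketch the gliding-hump proof that $\ell_1$-sums of Schur spaces are Schur, whereas the paper simply cites \cite[Lemma 8.1(i)]{Kalton2012MathAnnalen} for this classical fact.
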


\begin{proof}
  \eqref{CorKaltonMethodMIN.Item1} follows from   Theorem  \ref{ThmKaltonMethod}. For \eqref{CorKaltonMethodMIN.Item2}, notice that since $\cZ(Q)$ is the $\ell_1$-sum of Schur spaces,  $\cZ(Q)$ is also Schur (see \cite[Lemma 8.1(i)]{Kalton2012MathAnnalen}). Hence, as $\MIN(c_0)$ is not Schur, $\MIN(c_0)$ does not   isomorphically embed into  $\cZ(Q)$.
\end{proof}

 \begin{remark}
Notice that Corollary \ref{CorKaltonMethodMIN} provides a way to obtain Theorem \ref{ThmSepBoundEquivSpacesEQUIV2SEPARABLE}, but many other quotient maps would do the same. For instance, Theorem \ref{ThmSepBoundEquivSpacesEQUIV2SEPARABLE} can also be obtained using the fact that every separable operator space $X$ is a complete quotient of $(\bigoplus_n\M_n^*)_{\ell_1}$ (see \cite[Proposition  1.12.2]{Pisier-OS-book}). 
 \end{remark}

The remainder of this subsection is dedicated to prove Theorem \ref{ThmSepBoundEquivSpacesEQUIV2}.  Recall,   the operator space $R\cap C$ is the   operator subspace  of the $\ell_\infty$-sum $R\oplus C$ defined as the  image of the map \[x\in \ell_2\mapsto (u(x),v(x)) \in R\oplus C,\]
where $u\colon \ell_2\to R$ and $v\colon \ell_2\to C$ are the canonical linear isometries.  In particular, $R\cap C$ is also a Hilbertian operator space.

The next result pertains to the (complete) isomorphic theory of operator spaces. It was obtained jointly with   Timur Oikhberg and the author is grateful for   Oikhberg's kind permission to include it here.

\begin{proposition}\label{PropCompEmbSum}
Let $(Y_m)_m$ be a sequence of operator spaces all of which  are isomorphic to $\ell_\infty$. Then, the operator space $R\cap C$ does not completely isomorphically embed into $(\bigoplus_n Y_n)_{\ell_1}$.
\end{proposition}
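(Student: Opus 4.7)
The plan is to argue by contradiction. Suppose there is a complete isomorphic embedding $T\colon R\cap C \to Z := (\bigoplus_m Y_m)_{\ell_1}$ with $\|T\|_{\cb}, \|T^{-1}\|_{\cb} \leq K$ for some $K \geq 1$. Let $(x_j)_j$ be the canonical basis of $R\cap C$ (the image of the $\ell_2$-basis under $\ell_2 \hookrightarrow R \oplus C$) and set $y_j := T(x_j) \in Z$. Then $(y_j)$ is bounded, semi-normalized, and weakly null in $Z$ (the latter because $R\cap C$ is Hilbertian as a Banach space). A direct computation, in the spirit of Lemma~\ref{LemmaLowerBoundInterpolation}, shows that both the column and the row formed by $(x_j)_{j=1}^n$ in $\M_n(R\cap C)$ have norm exactly $\sqrt{n}$; hence by complete boundedness, the corresponding column and row of $(y_j)_{j=1}^n$ in $\M_n(Z)$ have norms in the interval $[\sqrt{n}/K,\,K\sqrt{n}]$.

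Using the canonical identification $\M_n((\bigoplus_m Y_m)_{\ell_1}) = (\bigoplus_m \M_n(Y_m))_{\ell_1}$ and the decomposition $y_j = (y_j^{(m)})_m$ with $y_j^{(m)} \in Y_m$, the preceding translates to
\[\sum_m \big\|[y_j^{(m)}]_{j=1}^{n}\big\|_{\M_n(Y_m),\,\mathrm{col}} \,\asymp\, \sqrt{n} \,\asymp\, \sum_m \big\|[y_j^{(m)}]_{j=1}^{n}\big\|_{\M_n(Y_m),\,\mathrm{row}},\]
while $\sum_m \|y_j^{(m)}\|_{Y_m} = \|y_j\|_Z \leq K$ for every $j$. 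The heart of the argument is to pass to a subsequence $(y_{j_k})_k$ that is essentially block-supported in the $\ell_1$-direct sum: one seeks $0 = m_0 < m_1 < \cdots$ and elements $u_k \in \bigoplus_{m_{k-1} < m \leq m_k} Y_m$ with $\sum_k \|y_{j_k} - u_k\|_Z < \infty$. Granted such a block subsequence, the disjointness of supports in the $\ell_1$-direct sum forces $\|[u_k]_{k=1}^n\|_{\M_n(Z),\,\mathrm{col}} = \sum_{k=1}^n \|u_k\|_Z \gtrsim n/K$, which combined with the upper bound $K\sqrt{n}$ yields $\sqrt{n} \lesssim K^2$, a contradiction for $n$ sufficiently large.

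The main obstacle is the block extraction, since the standard gliding-hump argument requires $\|y_j^{(m)}\|_{Y_m} \to 0$ as $j \to \infty$ for each fixed $m$, a property not automatic here because $Y_m$ is Banach-isomorphic to $\ell_\infty$ and hence not weakly sequentially complete (e.g.\ the unit vector basis of $\ell_\infty$ is weakly null but not norm-null). The plan is to handle this via a dichotomy: either $\|y_j^{(m)}\|_{Y_m} \to 0$ for every $m$ and the gliding-hump construction proceeds directly, or, after passing to a subsequence, some fixed $m_0$ has $\|y_j^{(m_0)}\|_{Y_{m_0}} \geq \delta > 0$. In the latter sub-case, one exploits that $(y_j)$ is the image of the $\ell_2$ unit-vector basis under a bounded linear map (so $a \mapsto \sum a_j y_j$ is bounded from $\ell_2$ into $Z$) together with Rosenthal's $\ell_1$-theorem (ruling out $\ell_1$-like subsequences, since $R\cap C \cong \ell_2$ has none) and the operator-space constraints on the coordinate column/row norms in $\M_n(Y_{m_0})$ to eliminate this alternative and reduce back to the block-extractable case.
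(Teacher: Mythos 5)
There is a genuine gap at the heart of your argument: the identity $\|[u_k]_{k=1}^{n}\|_{\M_n(Z),\,\mathrm{col}}=\sum_{k=1}^{n}\|u_k\|_{Z}$ for disjointly supported $u_1,\dots,u_n$ in the operator space $\ell_1$-sum is false, as is the identification $\M_n\big((\bigoplus_m Y_m)_{\ell_1}\big)=(\bigoplus_m \M_n(Y_m))_{\ell_1}$ for $n>1$. The $\ell_1$-sum of operator spaces is defined by the universal property that completely contractive maps out of it correspond to uniformly completely contractive families on the summands; for a column with entries $u_k\in Y_{m_k}$ supported in disjoint blocks, any such family $(\phi_m)$ sends it to the column $(\phi_{m_1}(u_1);\dots;\phi_{m_n}(u_n))$, whose norm is $\big\|\sum_k \phi_{m_k}(u_k)^*\phi_{m_k}(u_k)\big\|^{1/2}\leq\big(\sum_k\|u_k\|^2\big)^{1/2}$. (Already for $Z=\C\oplus_1\C$ the column $(e_1;e_2)$ has norm $\sqrt2$, not $2$.) Hence a semi-normalized disjointly supported block sequence has column norm of order $\sqrt n$ --- exactly the growth of the column of $(x_j)$ in $R\cap C$ --- and the comparison with the upper bound $K\sqrt n$ yields no contradiction; the row norms match as well, so no row/column comparison rescues the scheme. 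In addition, your reduction to the block-supported case is incomplete: nothing you invoke prevents $(y_j)$ from living entirely in a single summand $Y_{m_0}$, whose operator space structure is constrained only at the Banach level, and the sketched appeal to Rosenthal's theorem together with unspecified ``operator-space constraints'' on $\M_n(Y_{m_0})$ does not identify an actual obstruction in that case.

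The missing idea --- and what the paper actually does --- is to convert the complete embedding into a \emph{complementation} statement and then leave the operator space category entirely. Since $R$ is an injective operator space, the complete contraction $i\circ u^{-1}\colon u(R\cap C)\to R$ extends to a completely bounded map on all of $(\bigoplus_m Y_m)_{\ell_1}$, and composing back gives a bounded (Banach) projection of the $\ell_1$-sum onto $u(R\cap C)\cong\ell_2$. A gliding-hump and perturbation argument (this is where your block-extraction instinct is genuinely used, but now one only needs an infinite-dimensional subspace almost supported on finitely many coordinates, which exists because $\ell_2$ contains no copy of $\ell_1$) then produces a complemented copy of $\ell_2$ inside $(\bigoplus_{m\leq m_0}Y_m)_{\ell_1}\cong\ell_\infty$, contradicting the fact that every infinite-dimensional complemented subspace of $\ell_\infty$ is isomorphic to $\ell_\infty$.
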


\begin{proof}
Suppose for a contradiction that there is a complete isomorphic embedding $u\colon R\cap C\to (\bigoplus_n Y_n)_{\ell_1}$. Let $i\colon R\cap C\to R$ be the canonical isometry, so $i$ is   a complete contraction. As $R$ is an injective operator space (\cite[Theorem 4.5]{Ruan1989Trans}), the completely bounded map $i\circ u^{-1}\colon u(R\cap C)\to R$ extends to a completely bounded map $v\colon (\bigoplus_n Y_n)_{\ell_1}\to R$. Therefore, $p=u\circ i^{-1}\circ v$ is a bounded projection of $(\bigoplus_n Y_n)_{\ell_1}$ onto $u(R\cap C)$. From now on, the proof follows entirely in the Banach space level.

 For each $m\in\N$, let $p_m\colon (\bigoplus_n Y_n)_{\ell_1}\to (\bigoplus_{n=1}^mY_n)_{\ell_1}$ be the canonical projection.

\begin{claim}\label{ClaimPropCompEmb} 
For all $\eps>0$ there is $m\in \N$ and an infinite dimensional subspace $X\subset R\cap C$ such that $\|\mathrm{Id}_{u(X)}-p_m\restriction_{u(X)}\|<\eps$.
\end{claim}

\begin{proof}
As $R\cap C$ is isomorphic to $\ell_2$ and $(\bigoplus_n Y_n)_{\ell_1}$ is an $\ell_1$-sum,  this follows by a standard gliding hump argument from Banach space theory and we leave the details to the reader.
\end{proof}

As $u(R\cap C)$ is isomorphic to $\ell_2$, there is $C>1$ such that every subspace of $u(R\cap C)$ is the image of a projection on $u(R\cap C)$ of norm at most $C$. Fix a positive $\eps$ smaller than $\min\{1/(C\|p\|),1/(2\|u^{-1}\|)\}$, and let $m
\in \N$ and $X\subset R\cap C$ be given by Claim \ref{ClaimPropCompEmb} for $\eps$.   By our choice of $C$, $u(X)$ is the image of a projection $p'\colon u(R\cap C)\to u(X)$ of norm at most $C$. Therefore,   letting $q=p'\circ p$, we have that  $(q-q\circ p_m)\restriction_{u(X)}$  is an operator in $\cL(u(X))$ --- the space of bounded operators on $u(X)$ --- and, as $\eps<1/(C\|p\|)$, this operator has  norm less than 1. As $q\restriction_{u(X)}=\mathrm{Id}_{u(X)}$, it follows from basic Banach algebra theory that   $w=q\circ p_m\restriction_{u(X)}$ is an invertible operator in $\cL(u(X))$. It now easily follows that  the operator $p_m\circ w^{-1}\circ q\restriction_{(\bigoplus_{n=1}^mY_n)_{\ell_1}}$ is a bounded projection of $(\bigoplus_{n=1}^mY_n)_{\ell_1}$ onto $p_m\circ u(X)$.

As $\eps<1/(2\|u^{-1}\|)$, it is immediate that $p_m\circ u$ is an isomorphic embedding of $X$ onto $p_m\circ u(X)$. So, $p_m\circ u(X)$ is isomorphic to $\ell_2$. As each $Y_n$ is isomorphic to $\ell_\infty$, the space $(\bigoplus_{n=1}^mY_n)_{\ell_1}$ is also isomorphic to $\ell_\infty$. Therefore, by the previous paragraph, $\ell_\infty$ contains a complemented subspace isomorphic to $\ell_2$. This is a contradiction since all complemented subspaces of $\ell_\infty$ are isomorphic to $\ell_\infty$  (\cite[Theorem 5.6.5]{AlbiacKaltonBook}).
\end{proof}

Before the last corollary, we must recall that  there is a complete quotient map $Q\colon L_\infty[0,1]\to R\cap C$.  Indeed, by   \cite[Theorem 9.8.3]{Pisier-OS-book}, there is a complete isomorphic embedding $F\colon R+ C\to L_1[0,1]$.\footnote{\label{Footnote} The space $R+C$ is the quotient operator space of the $\ell_1$-sum $R\oplus_1 C$ by $\Delta=\{(x,-^tx)\mid  x\in R\}$. We refer the reader to \cite[Page 194]{Pisier-OS-book} for further details} Therefore, since $R\cap C=(R+C)^*$ completely isometrically (see \cite[Page 194]{Pisier-OS-book}), the map $Q=F^*\colon L_\infty[0,1]\to R\cap C$ is the desired quotient. 


 \begin{corollary}[Theorem   \ref{ThmSepBoundEquivSpacesEQUIV2}]  Let $Q\colon L_\infty[0,1]\to R\cap C$ be  the complete quotient map defined above. Then, we have that
 \begin{enumerate}
 \item \label{ItemCorKaltomMethod2}  $(R\cap C)\oplus \ker(\tilde Q)$ linearly isomorphically  embeds into $\cZ(  Q)$, 
 \item\label{ItemCorKaltomMethod1}  $R\cap C$ does not completely isomorphically embed into $\cZ(  Q)$, and 
  \item  the bounded subsets of $\cZ(Q)$ and $(R\cap C)\oplus \ker(\tilde Q)$ are almost  completely coarse Lipschitzly equivalent.
\end{enumerate}\label{CorKaltonMethod}
 \end{corollary}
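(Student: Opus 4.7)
The proof will split naturally into three parts, one for each claim.

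For (3), the plan is to apply Theorem \ref{ThmKaltonMethod} directly to the complete quotient $Q\colon L_\infty[0,1]\to R\cap C$ (which is, in particular, a completely bounded Banach quotient map); this immediately yields the almost complete coarse Lipschitz equivalence of the bounded subsets of $\cZ(Q)$ and $(R\cap C)\oplus\ker(\tilde Q)$. For (2), I will invoke Proposition \ref{PropCompEmbSum}: each summand $Y_m$ is, as a Banach space, just $L_\infty[0,1]$ equipped with a norm equivalent to $\|\cdot\|_{L_\infty}$, and $L_\infty[0,1]$ is Banach-isomorphic to $\ell_\infty$ by Pelczynski's classical decomposition theorem. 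The hypothesis of Proposition \ref{PropCompEmbSum} is then met, ruling out a complete isomorphic embedding of $R\cap C$ into $\cZ(Q)=(\bigoplus_m Y_m)_{\ell_1}$.

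The interesting step is (1). My plan is to first establish the Banach-space isomorphism
\[
\cZ(Q)\;\cong\;Y_0\oplus_{\ell_1}\cZ(Q)
\]
and then deduce (1) from it. For the deduction: $R\cap C$ is Banach-isomorphic to $\ell_2$, and $\ell_2$ Banach-isomorphically embeds into $L_\infty[0,1]\cong Y_0$ (for instance via $\ell_2\hookrightarrow C[0,1]\hookrightarrow L_\infty[0,1]$). This gives a Banach isomorphic embedding $j\colon R\cap C\to Y_0$, whence the composite
\[
(R\cap C)\oplus\ker(\tilde Q)\;\xrightarrow{(x,w)\mapsto(j(x),w)}\;Y_0\oplus_{\ell_1}\cZ(Q)\;\cong\;\cZ(Q)
\]
is the desired isomorphic embedding, where the second slot uses the trivial inclusion $\ker(\tilde Q)\hookrightarrow\cZ(Q)$. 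For the displayed Banach isomorphism, I will start from the trivial splitting $\cZ(Q)=Y_0\oplus_{\ell_1}(\bigoplus_{m\geq 1}Y_m)_{\ell_1}$ and argue that the tail $(\bigoplus_{m\geq 1}Y_m)_{\ell_1}$ is itself Banach-isomorphic to $\cZ(Q)$ via the index-shift map $(y_m)_{m\geq 1}\mapsto(y_{m+1})_{m\geq 0}$. The key quantitative input is the pointwise comparison
\[
\|y\|_{Y_{m+1}}\;\leq\;\|y\|_{Y_m}\;\leq\;2\,\|y\|_{Y_{m+1}},
\]
which is immediate from the defining formula $\|y\|_{Y_m}=\max\{2^{-m}\|y\|,\|Q(y)\|\}$ by a short case analysis on where the maximum is attained; this keeps the shift map bi-Lipschitz with distortion at most $2$.

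The main obstacle lies in reconciling (1) and (2). At the operator-space level, the matrix rigidity of $R\cap C$ obstructs complete isomorphic embeddings into the $\ell_1$-sum of $\ell_\infty$-summands; at the Banach level, however, the index-shift dissolves that rigidity and lets $\cZ(Q)$ absorb an additional copy of $L_\infty[0,1]$, and in particular a copy of $\ell_2\cong R\cap C$. Identifying the shift trick and verifying the elementary norm comparison between consecutive $Y_m$'s is where the real content of (1) lies.
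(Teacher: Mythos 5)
Your proposal is correct, and parts (2) and (3) are argued exactly as in the paper: (3) is a direct application of Theorem \ref{ThmKaltonMethod}, and (2) combines Proposition \ref{PropCompEmbSum} with the Banach isomorphism $L_\infty[0,1]\cong\ell_\infty$ after noting that each $Y_m$ carries a norm equivalent to that of $L_\infty[0,1]$. The only genuine divergence is in part (1), and it lies in how the absorption isomorphism $\cZ(Q)\cong L_\infty[0,1]\oplus\cZ(Q)$ is produced. The paper gets it in one line from the self-similarity $L_\infty[0,1]\cong L_\infty[0,1]\oplus L_\infty[0,1]$, replacing the first summand of the $\ell_1$-sum by two copies of $L_\infty[0,1]$; you instead peel off the first summand and re-index the tail, using the uniform two-sided comparison $\|y\|_{Y_{m+1}}\leq\|y\|_{Y_m}\leq 2\|y\|_{Y_{m+1}}$ (which is indeed immediate from the formula $\|y\|_{Y_m}=\max\{2^{-m}\|y\|,\|Q(y)\|\}$) to see that the shift is an isomorphism of $\ell_1$-sums with distortion at most $2$. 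Both routes are correct and both finish identically, embedding $R\cap C$ (separable, indeed $\cong\ell_2$) into the freed copy of $L_\infty[0,1]$ and including $\ker(\tilde Q)$ into $\cZ(Q)$ as a subspace. What your version buys is independence from any special property of the ambient space: the shift argument shows that for an arbitrary completely bounded Banach quotient $Q\colon Y\to X$ one has $\cZ(Q)\cong Y\oplus\cZ(Q)$, so $X\oplus\ker(\tilde Q)$ isomorphically embeds into $\cZ(Q)$ whenever $X$ Banach-embeds into $Y$, whereas the paper's shortcut exploits the Pelczynski-type decomposition specific to $L_\infty[0,1]$. The paper's version is shorter for this particular $Q$, but yours is marginally more general and entirely self-contained.
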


\begin{proof}
The last statement is simply Theorem \ref{ThmKaltonMethod}. For \eqref{ItemCorKaltomMethod2}, notice that, as   $L_\infty[0,1]=L_\infty[0,1]\oplus L_\infty[0,1]$, $\cZ(Q)$ is linearly isomorphic to $(\bigoplus_n Z_n)_{\ell_1}$, where $Z_1=L_\infty[0,1]\oplus L_\infty[0,1]$ and $Z_n=Y_n$ for all $n>1$ (here $(Y_n)_n$ is given as in the definition of $\cZ(Q)$). Then, as $L_\infty[0,1]$ contains $R\cap C$ linearly isometrically and $(\bigoplus_n Y_n)_{\ell_1}$ contains  $\ker(\tilde Q)$ linearly isometrically, \eqref{ItemCorKaltomMethod2} follows. Item \eqref{ItemCorKaltomMethod1}  follows immediately from Proposition   \ref{PropCompEmbSum} and the fact that $L_\infty[0,1]$ is isomorphic to $\ell_\infty$ (\cite[Theorem 4.3.10]{AlbiacKaltonBook}).
\end{proof}

 \section{Spherical amplifications}\label{SectionSphericalEmb}

 In this section, instead of amplifying maps $X\to Y$ in order to obtain maps $\M_n(X)\to \M_n(Y)$, we follow the opposite road: we start with  maps  $F\colon \M_n(X)\to \M_n(Y)$ and then look at weakenings   of the property of $F$ being the amplification of a map $X\to Y$. This gives rise to what we call    \emph{spherical amplifications}, which in turn gives us \emph{spherically-complete coarse embeddings}.    
 
\begin{definition}[Definition \ref{Defi.Spherical}\eqref{Defi.Spherical.Item1}]
Let $X$ and $Y$ be operator spaces, $n\in\N$, and $F\colon \M_n(X)\to \M_n(Y)$ be a map. 

\begin{enumerate}
\item Given $A\subset \M_n(X)$, $F$ is called an \emph{amplification on $A$} if there is $f\colon X\to Y$ such that $F\restriction_{ A}=f_n \restriction_{A}$. 
\item The map $ F$ is called a \emph{spherical amplification}  if $F $ is an   amplification on $r\cdot \partial B_{\M_n(X)}$ for all $r\geq 0$. 
\end{enumerate}
\end{definition}

In other words, given operator spaces $X$ and $Y$, a map  $F\colon \M_n(X)\to \M_n(Y)$ is a spherical amplification if there is a  family of maps $(f^r\colon X\to Y)_{r\geq 0}$ such  that  \[F\restriction_{r\cdot \partial B_{\M_n(X)}}=f^r_n\restriction_{r\cdot \partial B_{\M_n(X)}}\] for all $r\geq 0$. Such family is called a \emph{witness that $F$ is a spherical amplification}.  Clearly, if $f\colon X\to Y$ is a map, then each  amplification $f_n\colon \M_n(X)\to\M_n(Y)$ is a spherical amplification.

Before investigating nonlinear embeddings given by spherical amplifications, we prove some simple properties of this new class of maps. We start by showing that witnesses that maps are spherical amplifications have the following uniqueness property for $n>1$: 

\begin{proposition}\label{PropUniquenessSphAmp}
Let $X$ and $Y$ be operator spaces, $n>1$, and $F\colon \M_n(X)\to \M_n(Y)$ be a spherical amplification. If $(f^r)_{r\geq 0}$ and $(g^r)_{r\geq 0}$ are witnesses that $F$ is a spherical amplification, then $f^r\restriction_{r\cdot B_X}=g^r\restriction_{ r\cdot B_X}$ for all $r\geq 0$.
\end{proposition}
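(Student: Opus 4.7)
The plan is to produce, for each $r\geq 0$ and each $x\in r\cdot B_X$, an element $\bar x\in r\cdot\partial B_{\M_n(X)}$ whose $(1,1)$-entry is $x$. Once this is achieved, the defining property of each witness immediately yields
\[f^r_n(\bar x) = F(\bar x) = g^r_n(\bar x),\]
and comparing $(1,1)$-entries --- together with the fact that amplifications act entrywise --- forces $f^r(x) = g^r(x)$, which is exactly the conclusion of the proposition.

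For the construction, the case $r = 0$ is trivial because $0 \in r\cdot \partial B_{\M_n(X)}$. For $r > 0$ I may assume $X \neq \{0\}$ (otherwise there is nothing to prove), so that I can fix some $y\in X$ with $\|y\| = r$. Using that $n \geq 2$, I let $\bar x\in \M_n(X)$ be the matrix with $x$ in position $(1,1)$, $y$ in position $(2,2)$, and $0$ in every other position. Viewing $\bar x$ as a block-diagonal operator on $H^{\oplus n}$ via the canonical inclusion $\M_n(X)\subset \cB(H^{\oplus n})$, its norm equals the maximum of the norms of its diagonal blocks, namely $\max\{\|x\|,\|y\|\} = r$. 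Hence $\bar x\in r\cdot\partial B_{\M_n(X)}$ and the first paragraph applies.

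The main care-point --- and really the only place where the hypothesis $n > 1$ enters --- is having an extra diagonal slot to place a norm-$r$ padding element alongside $x$; for $n = 1$ no such room exists, and one could then freely modify $f^r$ on the interior of the ball without affecting $F$. I do not anticipate any genuine obstacle beyond this: once the block-diagonal padding is available, the entrywise action of amplifications finishes the argument.
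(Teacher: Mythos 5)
Your proof is correct and follows essentially the same route as the paper: pad $x$ into a block-diagonal matrix of norm exactly $r$ using an auxiliary norm-$r$ element in the extra diagonal slot (the paper places a norm-$r$ element of $\M_{n-1}(X)$ in the complementary block, you place a norm-$r$ element of $X$ in the $(2,2)$-entry, which is the same idea), then compare entries of $f^r_n(\bar x)=F(\bar x)=g^r_n(\bar x)$. Your explicit handling of $r=0$ and of the degenerate case $X=\{0\}$ is a minor extra care the paper leaves implicit.
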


Before proving this proposition, notice that this statement is clearly false if $n=1$. Indeed, for $n=1$, any function $F\colon \M_1(X)\to \M_1(Y)$ is an amplification of itself since $\M_1(X)=X$ and $\M_1(Y)=Y$.  So, letting $f^r=F$ for all $r\geq 0$, $(f^r)_{r\geq 0}$ witnesses that $F$ is a spherical amplification. On the other hand, we can define each $g^r\colon X\to Y$ to equal $F$ on $r\cdot \partial B_X$ and to equal anything else outside 
$r\cdot \partial B_X$; clearly,  $(g^r)_{r>0}$   also witnesses that $F$ is a spherical  amplification.

\begin{proof}[Proof of Proposition \ref{PropUniquenessSphAmp}]
Fix $r\geq 0$ and $x\in r\cdot B_X$. Pick an arbitrary $y\in \M_{n-1}(X)$ with norm $r$. Then, $\begin{bmatrix}
x&0\\
0& y 
\end{bmatrix}\in \M_n(X)$ has norm $r$ and we have that
\[\begin{bmatrix}
f^r(x)&f^r(0)\\
f^r(0)& f^r(y)
\end{bmatrix}
=F\left(\begin{bmatrix}
x&0\\
0& y
\end{bmatrix}\right)
=
\begin{bmatrix}
g^r(x)&g^r(0)\\
g^r(0)& g^r(y)
\end{bmatrix}.\]
Hence,  $f^r(x)=g^r(x)$ and the result follows.
\end{proof}

\begin{remark}\label{RemarkOutrBall}
Notice that the statement that   $(f^ r)_{r\geq 0}$ is  a witness that $F\colon \M_n(X)\to \M_n(Y)$ is  a spherical amplification is a statement which only asserts things about each $ f^ r$ on $r\cdot B_X$. Indeed, this follows since $\|[x_{ij}]\|_{\M_n(X)}\geq \|x_{\ell k}\|$ for all $[x_{ij}]\in \M_n(X)$ and all $\ell, k\in\{1,\ldots, n\}$. Therefore, we can modify each $f^ r$ outside $r\cdot B_X$ as we wish without losing the property of $(f^ r)_{r\geq 0}$  witnessing that  $F\colon \M_n(X)\to \M_n(Y)$ is  a spherical amplification.
\end{remark}

The next proposition shows that, in the linear setting, spherical amplifications are actual amplifications.  

\begin{proposition}
Let $X$ and $Y$ be operator spaces, $n\in\N$, and $F\colon \M_n(X)\to \M_n(Y)$ be a  spherical amplification. If $F$ is $\R$-linear, then $F$ is an   amplification. 
\end{proposition}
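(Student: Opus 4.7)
The plan is to use the $\R$-linearity of $F$ to assemble the family $(f^r)_{r\geq 0}$ witnessing that $F$ is a spherical amplification into a single map $f\colon X\to Y$ satisfying $F = f_n$. The case $n=1$ is trivial since $\M_1(X)=X$ and any map is its own amplification, so I focus on $n\geq 2$. Fix a witness $(f^r)_{r\geq 0}$. For $x\in X$ and $i,j\in\{1,\ldots,n\}$, write $x\cdot E_{ij}$ for the element of $\M_n(X)$ with $x$ in the $(i,j)$-entry and $0$ elsewhere; note that $\|x\cdot E_{ij}\|_{\M_n(X)}=\|x\|$.

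The key preliminary step, which I expect to be the main point of the argument, is to show that $f^r(0)=0$ for every $r\geq 0$. For $r=0$ this is immediate since $\R$-linearity forces $F(0)=0$ and hence every entry of $f^0_n(0)$ vanishes. For $r>0$, pick any $x\in X$ with $\|x\|=r$ and observe that the matrices $x\cdot E_{11}$, $x\cdot E_{22}$ and their sum all have norm $r$ in $\M_n(X)$, so $F$ acts on them via $f^r_n$. Comparing the $(1,2)$-entries in the identity $F(x\cdot E_{11}+x\cdot E_{22})=F(x\cdot E_{11})+F(x\cdot E_{22})$ yields $f^r(0)=f^r(0)+f^r(0)$, hence $f^r(0)=0$. (This is where the assumption $n\geq 2$ is crucially used, since we need an off-diagonal position to make the comparison.)

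With $f^r(0)=0$ in hand, define $f\colon X\to Y$ by $f(x)=f^{\|x\|}(x)$; this is well-defined because every $f^r$ is defined on all of $X$, and Proposition~\ref{PropUniquenessSphAmp} guarantees that the value $f^{\|x\|}(x)$ is in fact determined by $F$ alone. To verify $F=f_n$, take any $[x_{ij}]\in\M_n(X)$ and use the decomposition $[x_{ij}]=\sum_{i,j}x_{ij}\cdot E_{ij}$ together with $\R$-linearity of $F$ to write
\[
F([x_{ij}])=\sum_{i,j}F(x_{ij}\cdot E_{ij}).
\]
Since $\|x_{ij}\cdot E_{ij}\|_{\M_n(X)}=\|x_{ij}\|$, each term equals $f^{\|x_{ij}\|}_n(x_{ij}\cdot E_{ij})$, which is the matrix with $f^{\|x_{ij}\|}(x_{ij})=f(x_{ij})$ in position $(i,j)$ and $f^{\|x_{ij}\|}(0)=0$ elsewhere. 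Summing over $i,j$ produces the matrix $[f(x_{ij})]=f_n([x_{ij}])$, completing the proof. The argument is essentially routine once the vanishing of each $f^r(0)$ is established; no other subtlety is needed.
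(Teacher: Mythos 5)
Your proof is correct, and for the main step it takes a genuinely different (and shorter) route than the paper's. The first half coincides: the paper also begins by showing $f^r(0)=0$ for every $r$, using linearity of $F$ on the decomposition $\begin{bmatrix} x&0\\0&y\end{bmatrix}=\begin{bmatrix} x&0\\0&0\end{bmatrix}+\begin{bmatrix} 0&0\\0&y\end{bmatrix}$ with $x\in X$ and $y\in \M_{n-1}(X)$ of norm $r$ --- essentially your $x\cdot E_{11}+x\cdot E_{22}$ computation. Where you diverge is in producing the single amplifying function. The paper replaces each witness $f^r$ by its positively homogeneous extension $g^r$ off of $r\cdot B_X$ and then proves two intermediate claims --- that $g^r(x)=\frac{\|x\|}{r}g^r\big(r\frac{x}{\|x\|}\big)$ for all $x\neq 0$, and that $g^r=g^s$ for all $r,s>0$ --- before concluding $F=g^r_n$. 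You instead set $f(x)=f^{\|x\|}(x)$ and exploit additivity of $F$ on the matrix-unit decomposition $[x_{ij}]=\sum_{i,j}x_{ij}\cdot E_{ij}$: since each $x_{ij}\cdot E_{ij}$ lies on a single sphere of radius $\|x_{ij}\|$, $F$ acts on it through $f^{\|x_{ij}\|}_n$, and the vanishing of $f^r(0)$ makes the sum collapse to $[f(x_{ij})]$. This bypasses both of the paper's claims; the paper's route has the mild side benefit of exhibiting the amplifying map as a positively homogeneous function built from any one witness, but as a proof of the stated proposition your argument is cleaner and equally rigorous.
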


\begin{proof}
If $n=1$, any map is an amplification, so we assume $n>1$. Let $(f^r\colon X\to Y)_{r\geq 0}$ witness that $F$ is a spherical amplification.  Notice that $f^r(0)=0$ for all $r\geq 0$. Indeed, picking $x\in X$ and $y\in\M_{n-1}(X)$ both with norm $r$, we have that  
\begin{align*}
\begin{bmatrix}
f^r(x)&f^r(0)\\
f^r(0)&f^r (y)
\end{bmatrix}&=F\left(\begin{bmatrix}
 x&0\\
0&y
\end{bmatrix}\right)\\
&=F\left(\begin{bmatrix}
 x&0\\
0&0
\end{bmatrix}\right)+F\left(\begin{bmatrix}
 0&0\\
0&y
\end{bmatrix}\right) \\
&= \begin{bmatrix}
 f^r(x)+f^r(0)&f^r(0)+f^r(0)\\
f^r(0)+f^r(0)&f^r(0)+f^r(y)
\end{bmatrix} , \end{align*}
so $f^r(0)=0$ as claimed. 

For each $r>0$,   define 
\[g^r(x)=\left\{\begin{array}{ll}
f^r(x),& \text{ for }x\in r\cdot B_X,\\
\frac{\|x\|}{r}f^r\Big(r\frac{x}{\|x\|}\Big),& \text{ for  }x\not\in r\cdot B_{X}.
\end{array}\right.\]
By Remark \ref{RemarkOutrBall}, $(g^ r)_{r>0}$   witnesses that $F$ is a spherical amplification.   Clearly, we also have that $g^r(0)=0$ for all $r>0$; this will be used below with no further mention. 

\begin{claim}
$g^r(x)=  \frac{\|x\|}{r}g^r\Big(r\frac{x}{\|x\|}\Big)$ for all $r> 0$ and all $x\in X\setminus \{0\}$.
\end{claim}

\begin{proof}
Fix $r> 0 $ and $x\in X\setminus\{0\}$. If $x\not\in r\cdot B_X$, this follow from the definition of $g^r$. Say $\|x\|\leq r$. Then, picking $y\in \M_{n-1}(X)$ with norm $r$, we have  

\begin{align*}
\begin{bmatrix}
g^r(x)&0\\
0& g^r(y)
\end{bmatrix}
&=
F\left(\begin{bmatrix}
x&0\\
0& y
\end{bmatrix}
\right)\\
&=\frac{\|x\|}{r}F\left(\begin{bmatrix}
r\frac{x}{\|x\|}&0\\
0& 0 
\end{bmatrix}
\right)+F\left(\begin{bmatrix}
0&0\\
0& y
\end{bmatrix}\right)\\
&=\begin{bmatrix}
\frac{\|x\|}{r}g^r(r\frac{x}{\|x\|})&0\\
0& g^r(y)
\end{bmatrix}.
\end{align*}
So, $ g^r(x)=\frac{\|x\|}{r}g^r(r\frac{x}{\|x\|})$ and  the claim follows.
\end{proof}

\begin{claim}
$g^r=g^s$ for all $r,s>0$.
\end{claim}

\begin{proof}
Fix $s>r>0$.  If $x\in X$ has norm $r$, then, picking $y\in \M_{n-1}(Y)$ with norm $s$, we have 
\begin{align*}
\begin{bmatrix}
g^r(x)&0\\
0& 0
\end{bmatrix}
&=
F\left(\begin{bmatrix}
x&0\\
0& 0
\end{bmatrix}
\right)\\
&=F\left(\begin{bmatrix}
x&0\\
0& y
\end{bmatrix}
\right)-F\left(\begin{bmatrix}
0&0\\
0&  y
\end{bmatrix}\right)\\
&=\begin{bmatrix}
g^s(x)&0\\
0& 0
\end{bmatrix}.
\end{align*}
So, $g^r(x)=g^s(x)$. Therefore, by the previous claim, we have that, for an arbitrary $x\in X\setminus \{0\}$,
\[g^r(x)=\frac{\|x\|}{r}g^r\Big(r\frac{x}{\|x\|}\Big)=\frac{\|x\|}{r}g^s\Big(r\frac{x}{\|x\|}\Big)=g^s(x),\]
and the claim follows.\end{proof}

By the previous claim, it follows that $F=g^r_n$ for any $r>0$, so $F$ is an  amplification. 
\end{proof}

We now recall the notion of nonlinear embeddability between operator spaces   given by spherical amplifications:

\begin{definition}[Definition \ref{Defi.Spherical}\eqref{Defi.Spherical.Item2}]
Let $X$ and $Y$ be operator spaces, and $(F^n\colon \M_n(X)\to \M_n(Y))_n$ be a sequence of spherical amplifications.  If $(F^n)_n$ are equi-coarse (resp. equi-coarse Lipschitz) embeddings, then $(F^n)_n$ is called a \emph{spherically-complete coarse} (resp. \emph{coarse Lipschitz}) \emph{embedding}.
\label{DefiSphericalEmb}
\end{definition}

\begin{proposition}\label{PropSpheImpliesBounded}
Let $X$ and $Y$ be operator spaces.  If  $X$    spherically-completely  coarsely (resp. coarse Lipschitzly) embeds   into $Y$, then the bounded subsets of $X$ almost completely coarsely (resp. coarse Lipschitzly) embed into $Y$. 
\end{proposition}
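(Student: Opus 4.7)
The plan is to construct each $f^n\colon n\cdot B_X\to Y$ by selecting one specific witness that $F^{n+1}$ is a spherical amplification, namely its witness at radius $n$, and then to recover the coarse behavior of the amplification $f^n_n$ on $n\cdot B_{\M_n(X)}$ by a block-padding trick that embeds each $\bar x\in n\cdot B_{\M_n(X)}$ as an element on the sphere $n\cdot\partial B_{\M_{n+1}(X)}$. On that sphere, $F^{n+1}$ behaves as the honest amplification of the chosen witness, and the equi-coarse estimates for $F^{n+1}$ will transfer verbatim to $f^n_n$.

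Concretely, the case $X=\{0\}$ being trivial, fix $\xi\in X$ with $\|\xi\|=1$. For each $n\in\N$, choose a witness family $(f^{n+1,r}\colon X\to Y)_{r\geq 0}$ that $F^{n+1}$ is a spherical amplification, and set $f^n:=f^{n+1,n}\restriction_{n\cdot B_X}$. Since every entry of $\bar x\in n\cdot B_{\M_n(X)}$ has norm at most $n$, the amplification $f^n_n(\bar x)\in\M_n(Y)$ is well defined.

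For the key computation, given $\bar x\in n\cdot B_{\M_n(X)}$, define $\bar X\in\M_{n+1}(X)$ to be the block matrix with $\bar x$ in the top-left $n\times n$ corner, $n\xi$ in the $(n+1,n+1)$ slot, and zeros elsewhere. Then
\[\|\bar X\|_{\M_{n+1}(X)}=\max\{\|\bar x\|_{\M_n(X)},n\}=n,\]
so $\bar X\in n\cdot\partial B_{\M_{n+1}(X)}$, and the spherical amplification identity gives $F^{n+1}(\bar X)=(f^{n+1,n})_{n+1}(\bar X)$. Inspecting the entries, the top-left $n\times n$ block of $F^{n+1}(\bar X)$ equals $f^n_n(\bar x)$, while the remaining entries are either $f^{n+1,n}(0)$ or $f^{n+1,n}(n\xi)$, hence do not involve $\bar x$. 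Performing the same construction on another $\bar y\in n\cdot B_{\M_n(X)}$ produces $\bar Y\in n\cdot\partial B_{\M_{n+1}(X)}$, and the difference $F^{n+1}(\bar X)-F^{n+1}(\bar Y)$ vanishes outside the top-left block, yielding
\[\|F^{n+1}(\bar X)-F^{n+1}(\bar Y)\|_{\M_{n+1}(Y)}=\|f^n_n(\bar x)-f^n_n(\bar y)\|_{\M_n(Y)}\]
and $\|\bar X-\bar Y\|_{\M_{n+1}(X)}=\|\bar x-\bar y\|_{\M_n(X)}$ via the canonical complete isometric inclusion of $\M_n$ into the top-left corner of $\M_{n+1}$.

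Applying the equi-coarse (resp.\ equi-coarse Lipschitz) embedding bounds for $(F^n)_n$ to the pair $\bar X,\bar Y$ then yields the same bounds on $\|f^n_n(\bar x)-f^n_n(\bar y)\|$ in terms of $\|\bar x-\bar y\|$, proving that $(f^n_n\restriction_{n\cdot B_{\M_n(X)}})_n$ is an equi-coarse (resp.\ equi-coarse Lipschitz) embedding with the same moduli as $(F^n)_n$. The main subtlety is conceptual rather than computational: a spherical amplification behaves as an actual amplification only on each separate sphere, potentially through different witnesses on different spheres, so there is no single witness capturing $F^{n+1}$ on the ball $n\cdot B_{\M_{n+1}(X)}$; the padding trick is precisely what allows one to move each point in the interior of a ball in $\M_n(X)$ onto a single sphere in the larger space $\M_{n+1}(X)$, where one fixed witness suffices.
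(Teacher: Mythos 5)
Your proposal is correct and follows essentially the same route as the paper: the paper also takes the witness of $F^{n+1}$ at radius $n$, pads $\bar x\in n\cdot B_{\M_n(X)}$ with a norm-$n$ element in the extra diagonal slot to land on $n\cdot\partial B_{\M_{n+1}(X)}$, and transfers the moduli of $(F^n)_n$ directly. The only cosmetic difference is that you spell out the cancellation of the padding entries, which the paper leaves implicit.
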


\begin{proof}
  Let $(F^n\colon \M_n(X)\to \M_n(Y))_n$ be an almost spherically-complete coarse (resp. coarse Lipschitzly) embedding and let $\rho,\omega\colon [0,\infty)\to[0,\infty)$   witness that.  For each $n\in\N$, let $(f^{r,n}\colon X\to Y)_r$ witness that $F^n$ is a spherical amplification. For each $n\in\N$, let $g^n=f^{n,n+1}$ and $x_n\in X$ have norm $n$. Then 
\begin{align*}
\|g^n_n(\bar x)-g^n_n(\bar y)\|_{\M_n(Y)}&=\left\|\begin{bmatrix}
g^n_n(\bar x)-g^n_n(\bar y)&0\\
0&0
\end{bmatrix}\right\|_{\M_{n+1}(Y)}\\
&=\left\|F^{n+1}\left(\begin{bmatrix}
  \bar x &0\\
0&x_n
\end{bmatrix}\right)-F^{n+1}\left(\begin{bmatrix}
  \bar y &0\\
0&x_n
\end{bmatrix}\right)\right\|_{\M_{n+1}(Y)}\\
&\leq \omega(\|\bar x-\bar y\|_{\M_n(X)})
\end{align*}
for all $n\in\N$ and all $\bar x ,\bar y\in n\cdot B_{\M_n(X)}$. 
Similar computations show that \[\|g^n_n(\bar x)-g^n_n(\bar y)\|_{\M_n(Y)}\geq \rho(\|\bar x-\bar y\|_{\M_n(X)})\] for all $n\in\N$ and all $\bar x ,\bar y\in n\cdot B_{\M_n(X)}$. So, $(g^n\colon n\cdot B_{X}\to Y)_n$ is an almost complete coarse (resp. coarse Lipschitzly) embedding of bounded subsets. 
\end{proof}

We now revisit the method of N. Kalton presented in Section \ref{SectionSomeEmb}. In order to obtain embeddings coming from spherical amplifications, we need to obtain maps $\M_n(X)\to \M_n(Y)$, i.e., maps defined on   whole spaces and not only on bounded sets. For that, the following, which is a version of \cite[Lemma 7.4]{Kalton2012MathAnnalen}, serves as a gluing method. 

\begin{lemma}\label{LemmaKaltonGluing}
Let $X$ and $Y$ be operator spaces, $k\in\N$, $K>0$, and let $(f^t)_{t\geq 0}$ be a family in $\cH(X,Y)$ such that $\|f^t_k\|^{e^{-2t}}\leq K$ for all $t\geq 0$, and 
\[\|f^t_k-f^s_k\|\leq K|t-s|\text{ for all } t,s\geq 0.\]
Then, the map $F\colon \M_k(X)\to \M_k(Y)$ given by 
\[F(\bar x)=\left\{\begin{array}{ll}
0,& \text{ if } \bar x=0,\\
\vphantom{i}[f^0(x_{ij})] ,& \text{ if }\|\bar x\|\leq 1,\\ 
\vphantom{i}[f^{\log\|\bar x\|}(x_{ij})],& \text{ if }\|\bar x\|> 1,
\end{array}\right.  \]
for all $\bar x=[x_{ij}]\in \M_k(X)$, is a coarse spherical  amplification. Precisely, $F$ satisfies $\|F(\bar x)-F(\bar z)\|\leq 2K\|\bar x-\bar z\|+K$ for all $\bar x,\bar z\in \M_k(X)$. 
\end{lemma}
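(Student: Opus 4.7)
The plan is to verify two properties: the spherical amplification property, and the coarse Lipschitz estimate. The spherical amplification property is essentially by construction. For each $r\geq 0$, define $\tilde f^r = f^0$ if $r\leq 1$ and $\tilde f^r = f^{\log r}$ if $r > 1$. Every $\bar x\in r\cdot \partial B_{\M_k(X)}$ has $\|\bar x\| = r$, so the three-clause formula for $F$ collapses to $F(\bar x) = \tilde f^r_k(\bar x)$, exhibiting $(\tilde f^r)_{r\geq 0}$ as a witnessing family.

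For the estimate $\|F(\bar x)-F(\bar z)\|\leq 2K\|\bar x-\bar z\|+K$, set $t(\bar u)=\max\{\log\|\bar u\|,0\}$ for $\bar u\neq 0$, so that $F(\bar u)=f^{t(\bar u)}_k(\bar u)$. Assuming without loss of generality $\|\bar x\|\geq \|\bar z\|$, I would split the difference through a common-level intermediate term,
\[
F(\bar x)-F(\bar z)=\bigl(f^{t(\bar x)}_k(\bar x)-f^{t(\bar x)}_k(\bar z)\bigr)+\bigl(f^{t(\bar x)}_k(\bar z)-f^{t(\bar z)}_k(\bar z)\bigr),
\]
and control the two pieces separately. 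For the within-level term, the hypothesis $\|f^{t(\bar x)}_k\|^{e^{-2t(\bar x)}}\leq K$, together with the fact that $t(\bar x)$ is the larger of the two levels, makes both $e^{-2t(\bar x)}\|\bar x\|$ and $e^{-2t(\bar x)}\|\bar z\|$ bounded by $e^{-t(\bar x)}\leq 1$, giving the bound $K\|\bar x-\bar z\|+K$. For the across-level term, I would invoke positive homogeneity of $f^{t(\bar x)}_k-f^{t(\bar z)}_k$ together with $\|f^t_k-f^s_k\|\leq K|t-s|$ to obtain
\[
\|f^{t(\bar x)}_k(\bar z)-f^{t(\bar z)}_k(\bar z)\|\leq \max\{\|\bar z\|,1\}\cdot K\cdot |t(\bar x)-t(\bar z)|.
\]

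The main obstacle, and the one step that requires a genuine trick, is converting $|t(\bar x)-t(\bar z)|$ into a quantity controlled by $\|\bar x-\bar z\|$. I would do this via the elementary inequality $\log(1+u)\leq u$. When both $\|\bar x\|,\|\bar z\|>1$, writing $t(\bar x)-t(\bar z)=\log(\|\bar x\|/\|\bar z\|)$ and applying the inequality with $u=(\|\bar x\|-\|\bar z\|)/\|\bar z\|$ gives $\|\bar z\|(t(\bar x)-t(\bar z))\leq \|\bar x\|-\|\bar z\|\leq \|\bar x-\bar z\|$. In the mixed regime $\|\bar x\|>1\geq \|\bar z\|$, one has $t(\bar x)=\log\|\bar x\|\leq \|\bar x\|-1\leq \|\bar x-\bar z\|$ directly, and if both norms are $\leq 1$ the term vanishes. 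Thus in all cases the across-level term is at most $K\|\bar x-\bar z\|$.

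The boundary case when $\bar x=0$ or $\bar z=0$ reduces via positive homogeneity (which forces $f^t(0)=0$) to a single application of the within-level estimate at the nonzero endpoint, producing a bound of $K\|\bar x-\bar z\|$. Summing the two pieces across all cases yields the claimed inequality $\|F(\bar x)-F(\bar z)\|\leq 2K\|\bar x-\bar z\|+K$, from which coarseness is immediate, completing the proof.
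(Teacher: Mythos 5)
Your proposal is correct and follows essentially the same route as the paper: the same splitting of $F(\bar x)-F(\bar z)$ through the intermediate term $f^{t(\bar x)}_k(\bar z)$, the same use of the hypothesis $\|f^t_k\|^{e^{-2t}}\leq K$ to get the within-level bound $K\|\bar x-\bar z\|+K$, and the same use of positive homogeneity together with $\log(1+u)\leq u$ (equivalently $\|\bar z\|\log(\|\bar x\|/\|\bar z\|)\leq\|\bar x\|-\|\bar z\|$) to bound the across-level term by $K\|\bar x-\bar z\|$. The only cosmetic difference is that you phrase the case analysis uniformly via $t(\bar u)=\max\{\log\|\bar u\|,0\}$ where the paper splits into the regimes $\|\bar x\|\leq 1$ and $\|\bar x\|>1$ explicitly.
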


\begin{proof}
It is clear from $F$'s definition that $F$ is a spherical amplification, so we only prove the last statement. For convenience, let $f^t=f^0$ for all $t<0$. Fix $\bar x=[x_{ij}],\bar z=[z_{ij}]\in \M_k(X)$ and suppose $\|\bar x\|\geq \|\bar z\|\geq 0$. Since $\|F(\bar x)\|\leq K\|\bar x\|$, we can assume that   $\|\bar z\|> 0$. If $\|\bar x\|\leq 1$, we have
\[ \|[f^0( x_{ij})-f^0(z_{ij})]\|
\leq K\max\{\|\bar x - \bar z\|, \|\bar x \|\}.\]
If $\|\bar x\|>1$,   we have that
\[ 
\|[f^{\log\|\bar x\|} ( x_{ij})-f^{\log\|\bar x\|}(z_{ij})]\|\leq K\max\{\|\bar x -\bar z \|, \|\bar x \|^{-1}\}.
\]
Combining both inequalities above, we have that 
\begin{align*}
\|[f^{\log\|\bar x\|} ( x_{ij})-f^{\log\|\bar x\|}(z_{ij})]\|&\leq K\max\{\|\bar x -\bar z \|, \min\{\|\bar x\|,\|\bar x \|^{-1}\}\}\\
&\leq  K\|\bar x -\bar z \|+K.
\end{align*}

If $\|\bar z\|\leq 1$ and $\|\bar x\|>1$, then 
\[
\|[f^{0} ( z_{ij})-f^{\log\|\bar x\|}(z_{ij})]\|\leq K  \log\ \|\bar x\| \leq K\|\bar x\|-K\leq K\|\bar x-\bar z\|,
\]
and if $\|\bar z\|>1$, we have
\[
\|[f^{\log\|\bar z\|} ( z_{ij})-f^{\log\|\bar x\|}(z_{ij})]\|\leq K\|\bar z\|\log\frac{\|\bar x\|}{\|\bar z\|}\leq K\|\bar x-\bar z\|.
\]
Therefore, the inequalities above together imply that   $\|F(\bar x)-F(\bar z)\|\leq 2K\|\bar x-\bar z\|+K$.
\end{proof}

\begin{lemma}\label{LemmaSectionPlusKaltonGluing}
Let $X$ and $Y$ be operator spaces, $Q\colon Y\to X$ be a  Banach quotient map, and $k\in\N$. Then, considering $\tilde Q\colon \cZ(Q)\to X$ as defined in Section \ref{SectionSomeEmb}, we have that  $\tilde Q_k$ admits a  coarse section $F\colon \M_k(X)\to \M_k(\cZ(Q))$ which is a spherical amplification. Moreover, $F$ satisfies $\|F(\bar x)-F(\bar z)\|\leq 2\|\bar x-\bar z\|+1$ for all $\bar x, \bar z\in \M_k(X)$.
\end{lemma}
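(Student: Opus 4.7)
The strategy is to combine Lemma \ref{LemmaSections} with the gluing construction of Lemma \ref{LemmaKaltonGluing}: I would produce a family $(f^t)_{t \ge 0}$ of positively homogeneous sections of $\tilde Q$ satisfying the hypotheses of Lemma \ref{LemmaKaltonGluing}, and then read off $F$ from that lemma. Because the formula defining $F$ evaluates $f^t$ coordinatewise at $t = \log \|\bar x\|$ (or $t = 0$), the identity $\tilde Q \circ f^t = \mathrm{id}_X$ for each $t$ immediately gives $\tilde Q_k \circ F = \mathrm{id}_{\M_k(X)}$, so $F$ will automatically be a section of $\tilde Q_k$.

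The main obstacle is that Lemma \ref{LemmaSections} produces, for each tolerance $\eps > 0$, a section whose image lies in a \emph{single} summand $Y_{m(\eps)}$ of the $\ell_1$-sum $\cZ(Q)$. Adjacent summands are essentially disjoint in $\ell_1$, so naively setting $f^t$ equal to the section obtained at $\eps = e^{-2t}$ would produce a family with unbounded jump discontinuities whenever $m(\eps)$ increments, violating the required estimate $\|f^t_k - f^s_k\| \le K|t-s|$. To resolve this, fix once and for all a single positively homogeneous section $f_0 \colon X \to Y$ of $Q$ with $\|f_0(x)\|_Y \le C \|x\|_X$, let $\iota_m \colon Y \to \cZ(Q)$ be the completely isometric inclusion onto the $m$-th summand (so $\tilde Q \circ \iota_m = Q$), choose constants $a, b$ depending on $C$ and $k$, set $m(t) = \lceil at + b \rceil$ and $\alpha(t) = at + b - m(t) + 1 \in [0,1)$, and define
\[
 f^t = (1 - \alpha(t))\, \iota_{m(t)} \circ f_0 + \alpha(t)\, \iota_{m(t)+1} \circ f_0.
\]
At a transition where $m$ jumps from $n$ to $n+1$, both one-sided limits equal $\iota_{n+1} \circ f_0$, so $t \mapsto f^t$ is continuous.

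The verification of the two hypotheses of Lemma \ref{LemmaKaltonGluing} then proceeds as follows. Positive homogeneity of $f^t$ and the section property $\tilde Q \circ f^t = \mathrm{id}_X$ follow from the positive homogeneity of $f_0$, the linearity of the $\iota_m$'s, and the fact that the convex weights sum to $1$. The bound $\|f^t_k\|^{e^{-2t}} \le K$ reduces, exactly as in the proof of Lemma \ref{LemmaSections}, to arranging $2^{-m(t)+1} C k^2 \le e^{-2t}$ for every $t \ge 0$; since each of the two constituents $\iota_{m(t)} \circ f_0$ and $\iota_{m(t)+1} \circ f_0$ then obeys the bound, the convex combination $f^t$ does too. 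For the Lipschitz-in-$t$ estimate, on any interval where $m(t)$ is constant one has $f^t - f^s = (\alpha(t) - \alpha(s))(\iota_{m+1} - \iota_m) \circ f_0$; the amplification of $\iota_{m+1} \circ f_0 - \iota_m \circ f_0$ on $B_{\M_k(X)}$ has norm bounded uniformly, because the $\ell_1$-sum structure makes its $\M_k(\cZ(Q))$-norm equal to the sum of the $\M_k(Y_m)$- and $\M_k(Y_{m+1})$-norms of $[f_0(x_{ij})]$, both uniformly controlled by $C$ and $k$. Since $|\alpha(t) - \alpha(s)| = a|t-s|$, this yields a uniform Lipschitz bound, and continuity of $t \mapsto f^t$ propagates it across the transitions. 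Applying Lemma \ref{LemmaKaltonGluing} then delivers $F$, and tuning $a, b$ so that the resulting constant $K$ equals $1$ yields the precise estimate $\|F(\bar x) - F(\bar z)\| \le 2\|\bar x - \bar z\| + 1$.
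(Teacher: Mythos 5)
Your proposal follows essentially the same route as the paper: use Lemma \ref{LemmaSections} to produce sections of $\tilde Q$ at a discrete set of tolerances, linearly interpolate to obtain a continuous one-parameter family $(f^t)_{t\ge 0}$ (convexity of $\|\cdot\|^{\eps}$ and of the section property making the interpolants admissible), and feed this family into Lemma \ref{LemmaKaltonGluing}; your extra care in pinning the interpolation to \emph{adjacent} summands $Y_{m}, Y_{m+1}$ with a single underlying section $f_0$ of $Q$ is a slightly more explicit version of what the paper does. The one quibble is your closing claim that $a,b$ can be tuned to force $K=1$: the constraint $2^{-m(t)+1}Ck^2\le e^{-2t}$ forces $a\ge 2/\log 2>1$, and since consecutive summands of the $\ell_1$-sum are at $\cH$-distance comparable to $2$ from each other, the Lipschitz-in-$t$ constant cannot be pushed below roughly $2a$, so you only get $\|F(\bar x)-F(\bar z)\|\le 2K\|\bar x-\bar z\|+K$ for some universal $K>1$ rather than the stated $K=1$. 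This is a purely quantitative slip (the paper's own estimate $\|f^t_k-f^s_k\|\le|t-s|$ is loose by the same kind of factor), and the precise constant is immaterial for the application in Theorem \ref{ThmKaltonMethodSpherical}, which only needs equi-coarseness.
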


\begin{proof}
By Lemma \ref{LemmaSections}, there is a sequence $(f^n)_{n=0}^\infty$ in $\cH(X,\cZ(Q))$ of sections of $\tilde Q$ such that $\|f^n_k\|^{e^{-2n}}\leq 1$ for all $n\geq 0$. We extend the sequence $(f^n)_{n=0}^\infty$  to a family $(f^t)_{t\geq 0}$  as follows:  for each $n\geq 0$ and $t\in [n,n+1)$, let 
\[f^t(x)=(n+1-t)f^n(x)+(t-n)f^{n+1}(x)\]
for all $x\in X$. So $(f^t)_{t\geq 0}$ is a family of sections of $\tilde Q$ satisfying  $\|f_k^t\|^{e^{-2t}}\leq 1$ for all $t\geq 0$. Moreover, as $\|f^n_k\|\leq 1$ for all $n\in\N$, it follows that $\|f^t_k-f^s_k\|\leq |t-s|$ for all $t,s\geq 0$.

Let $F\colon \M_k(X)\to \M_k(\cZ(Q))$ be the map given by Lemma \ref{LemmaKaltonGluing} applied to the family  $(f^t)_{t\geq 0}$. So, as each $f^t$ is a section of $\tilde Q$, it is clear from its  formula  that $F$  is  a section of $\tilde Q_k$. Moreover, $F$ satisfies all desired properties by Lemma \ref{LemmaKaltonGluing}.
\end{proof}

Theorem \ref{ThmKaltonMethod} has the following version for   spherical embeddings:

\begin{theorem}\label{ThmKaltonMethodSpherical}
Let $X$ and $Y$ be operator spaces,  $Q\colon Y\to X$ be a completely bounded  Banach quotient map, and $\tilde Q\colon \cZ(Q)\to X$ be as above.   Then  $X$   spherically-completely coarse Lipschitzly embeds into  $\cZ(Q)$. 
\end{theorem}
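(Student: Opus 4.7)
The plan is to construct the desired sequence of spherical amplifications by applying Lemma \ref{LemmaSectionPlusKaltonGluing} uniformly at each level $n$. Specifically, for each $n\in\N$, let $F^n\colon \M_n(X)\to \M_n(\cZ(Q))$ be the coarse section of $\tilde Q_n$ produced by Lemma \ref{LemmaSectionPlusKaltonGluing} applied with $k=n$. By that lemma, each $F^n$ is a spherical amplification and satisfies
\[
\|F^n(\bar x)-F^n(\bar z)\|_{\M_n(\cZ(Q))}\leq 2\|\bar x-\bar z\|_{\M_n(X)}+1
\]
for all $\bar x,\bar z\in \M_n(X)$. Since the constant $2$ and additive term $1$ do not depend on $n$, the sequence $(F^n)_n$ is automatically equi-coarse Lipschitz.

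The remaining step is to establish the equi-expanding lower bound. This is where we exploit the key fact that each $F^n$ is a section of $\tilde Q_n$. Since $\tilde Q\colon \cZ(Q)\to X$ is completely bounded, $\tilde Q_n$ is Lipschitz with constant at most $\|\tilde Q\|_{\cb}$, uniformly in $n$. Thus, for all $\bar x,\bar z\in \M_n(X)$,
\[
\|\bar x-\bar z\|_{\M_n(X)}=\|\tilde Q_n(F^n(\bar x))-\tilde Q_n(F^n(\bar z))\|_{\M_n(X)}\leq \|\tilde Q\|_{\cb}\,\|F^n(\bar x)-F^n(\bar z)\|_{\M_n(\cZ(Q))},
\]
which gives
\[
\|F^n(\bar x)-F^n(\bar z)\|_{\M_n(\cZ(Q))}\geq \|\tilde Q\|_{\cb}^{-1}\|\bar x-\bar z\|_{\M_n(X)}.
\]
Combining these two bounds, $(F^n)_n$ is a sequence of equi-coarse Lipschitz embeddings, and since each $F^n$ is a spherical amplification, this is by definition a spherically-complete coarse Lipschitz embedding of $X$ into $\cZ(Q)$.

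There is no significant obstacle here: the hard work has been carried out in the preceding lemmas. Lemma \ref{LemmaSections} produces sections on each matricial level with control on the homogeneous moduli $\|\cdot\|^{\eps}$, Lemma \ref{LemmaKaltonGluing} provides Kalton's gluing trick to paste these sections together into a single map on all of $\M_n(X)$ that is a spherical amplification and coarse Lipschitz with universal constants, and Lemma \ref{LemmaSectionPlusKaltonGluing} packages the two. The only genuinely new observation required for the theorem is the trivial remark that being a section of the completely bounded map $\tilde Q_n$ forces the expanding bound, with constant independent of $n$.
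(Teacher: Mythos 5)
Your proof is correct, and the construction of the maps $F^n$ is exactly the paper's: both invoke Lemma \ref{LemmaSectionPlusKaltonGluing} at each matrix level to get equi-coarse Lipschitz spherical amplifications that are sections of $\tilde Q_n$. Where you diverge is in verifying equi-expansion. The paper introduces the mutually inverse pair $G^k(\bar y)=(\tilde Q_k(\bar y),\bar y-F^k(\tilde Q_k(\bar y)))$ and $T^k(\bar x,\bar z)=\bar z+F^k(\bar x)$ on $\M_k(X\oplus\ker\tilde Q)$, observes that a pair of mutually inverse equi-coarse maps consists of equi-coarse embeddings, and then restricts $T^k$ to $\M_k(X)\oplus 0$ to recover $F^k$. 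You instead note that since $\tilde Q_n\circ F^n=\mathrm{Id}_{\M_n(X)}$ and $\tilde Q_n$ is linear with $\|\tilde Q_n\|\le\|\tilde Q\|_{\cb}$ uniformly in $n$, one gets the clean lower bound $\|F^n(\bar x)-F^n(\bar z)\|\ge\|\tilde Q\|_{\cb}^{-1}\|\bar x-\bar z\|$ directly. Your route is shorter and yields a sharper quantitative expansion modulus with no additive loss; the paper's route is heavier but mirrors the structure of Theorem \ref{ThmKaltonMethod} and records the extra information that $(G^k)_k$ and $(T^k)_k$ form equi-coarse \emph{equivalences} between $\M_k(\cZ(Q))$ and $\M_k(X\oplus\ker\tilde Q)$ (which, as the paper remarks afterwards, still falls short of a spherically-complete equivalence since $G^k$ and $T^k$ need not be spherical amplifications). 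Both arguments are sound.
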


\begin{proof}
Let $Z=\ker(\tilde Q)$. For each $k\in\N$, let $F^{k}\colon \M_{k}(X)\to \M_{k}(\cZ(Q))$ be the coarse spherical amplification section of $\tilde Q_{k}\colon  \M_{k}(\cZ(Q))\to \M_{k}(X)$ given by Lemma \ref{LemmaSectionPlusKaltonGluing}. So, $(F^k)_k$ is equi-coarse.

For each $k\in\N$,  define maps $G^k\colon \M_k(\cZ(Q))\to \M_k(X\oplus Z)$ and $T^k\colon \M_k(X\oplus Z)\to \M_k(\cZ(Q))$ by letting \[G^k(\bar y)=(\tilde Q_k(\bar y), \bar y-F^k(\tilde Q_k(\bar y)))\text{ and }T^k(\bar x,\bar z)=\bar z+F^k(\bar x)\] for all $\bar y\in \M_k(\cZ(Q))$ and all $(\bar x,\bar z)\in \M_k(X\oplus Z)$. Since $(F^k)_k$ is equi-coarse and $\tilde Q$ is completely bounded,   both $(G^k)_k$  and $(F^k)_k$ are   sequences of  equi-coarse maps. Moreover, it is straightforward to check that $G^k$ and $T^k$ are inverses of each other. Therefore, both $(G^k)_k$ and $(T^k)_k$ are equi-coarse embeddings (equivalences even). Since $F^k=T^k\restriction_{ \M_k(X)}$, this shows that $(F^k)_k$ is a sequence of equi-coarse embeddings. Since each $F^k$ is a spherical amplification, $(F^k)_k$ is a  spherically-complete coarse Lipschitz embedding. 
\end{proof}

Notice that while Theorem \ref{ThmKaltonMethod} is a result on the existence of a nonlinear kind of \emph{equivalence}, Theorem \ref{ThmKaltonMethodSpherical} only gives us \emph{embeddings}. Indeed, although the maps $(F^k )_k$ are spherical amplifications, the maps $(G^k)_k$ and $(T_k)_k$ need not be.  

The   next corollary  is the spherical version of Corollary \ref{CorKaltonMethodMIN} and its proof follows completely analogously but with Theorem \ref{ThmKaltonMethodSpherical} replacing Theorem \ref{ThmKaltonMethod}.

 \begin{corollary}\label{CorKaltonMethod2}
 Let $Q\colon \MIN(\ell_1)\to\MIN( c_0)$ be a completely bounded  map which is also a Banach quotient, and let $\tilde Q\colon \cZ(Q)\to X$ be as above.  Then  
\begin{enumerate}
\item $\MIN( c_0) $ does not  isomorphically embed  into  $\cZ(Q)$, and 
\item   the bounded subsets of $\cZ(Q)$ and  $\MIN( c_0)\oplus \ker(\tilde Q)$  are  almost completely coarse Lispchitzly equivalent.\qed
\end{enumerate} 
 \end{corollary}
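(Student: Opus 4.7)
The stated conclusions of Corollary~\ref{CorKaltonMethod2} are verbatim repetitions of those of Corollary~\ref{CorKaltonMethodMIN}, so my plan is to reuse the identical two-step strategy: for item~(1) a Schur-property argument carried out at the Banach-space level, and for item~(2) a direct black-box application of Theorem~\ref{ThmKaltonMethod}. Although the spherical machinery of this section is thematically relevant, neither item requires it; item~(2) as worded asks for an equivalence of bounded subsets, which is precisely what Theorem~\ref{ThmKaltonMethod} produces (the spherical Theorem~\ref{ThmKaltonMethodSpherical} would only deliver a one-sided spherical embedding, not the two-sided equivalence of bounded subsets).

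For item~(1), I will unpack the definition of $\cZ(Q)=(\bigoplus_m Y_m)_{\ell_1}$ and note that each summand $Y_m$ is $\MIN(\ell_1)$ endowed with the equivalent norm $\max\{2^{-m}\|\cdot\|,\|Q(\cdot)\|\}$. The two-sided bound $2^{-m}\|y\|\leq\|y\|_{Y_m}\leq(2^{-m}+\|Q\|)\|y\|$ shows that each $Y_m$ is Banach-isomorphic to $\ell_1$, hence Schur. By \cite[Lemma~8.1(i)]{Kalton2012MathAnnalen} an $\ell_1$-sum of Schur spaces is Schur, so $\cZ(Q)$ is Schur. Since the Schur property is inherited by closed subspaces, an isomorphic embedding $\MIN(c_0)\hookrightarrow\cZ(Q)$ would force $\MIN(c_0)\cong c_0$ to be Schur, which it is not (its unit-vector basis is weakly null of norm one). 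Hence no such embedding exists.

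For item~(2), I will invoke Theorem~\ref{ThmKaltonMethod} with $X=\MIN(c_0)$ and $Y=\MIN(\ell_1)$. The hypotheses hold by assumption on $Q$, and the conclusion gives almost complete coarse Lipschitz equivalence of the bounded subsets of $\cZ(Q)$ and $\MIN(c_0)\oplus\ker(\tilde Q)$, as required. There is no real obstacle anywhere in the argument; the only small verification is the Banach-space equivalence $Y_m\cong\ell_1$ used to propagate the Schur property in item~(1), which is immediate from the two-sided norm bound above.
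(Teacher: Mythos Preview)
Your argument is correct for the statement as literally written, and you have in fact reproduced the proof of Corollary~\ref{CorKaltonMethodMIN} verbatim, which is appropriate since the two statements are identical. The paper's own one-line proof, however, says the argument ``follows completely analogously but with Theorem~\ref{ThmKaltonMethodSpherical} replacing Theorem~\ref{ThmKaltonMethod}.'' As you already observed, Theorem~\ref{ThmKaltonMethodSpherical} yields only a one-sided spherically-complete coarse Lipschitz embedding of $\MIN(c_0)$ into $\cZ(Q)$, not the two-sided equivalence of bounded subsets asserted in item~(2); so the paper's instruction does not match the printed conclusion.

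The resolution is that item~(2) of Corollary~\ref{CorKaltonMethod2} is almost certainly a copy-paste slip: the surrounding text calls this ``the spherical version'' of Corollary~\ref{CorKaltonMethodMIN}, and the corollary is invoked immediately afterwards as the entire proof of Theorem~\ref{ThmSepSphericalEmb}, whose conclusion is that $X$ \emph{spherically-completely coarsely embeds} into $Y$. So the intended item~(2) should read that $\MIN(c_0)$ spherically-completely coarse Lipschitzly embeds into $\cZ(Q)$, and for that intended statement Theorem~\ref{ThmKaltonMethodSpherical} is exactly the right black box. Your proof is the correct one for what is written; the paper's is the correct one for what was meant.
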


\begin{proof}[Proof of Theorem \ref{ThmSepSphericalEmb}]
This is simply Corollary \ref{CorKaltonMethod2}.
\end{proof}

\begin{acknowledgments}
The author would like to thank Javier  Alejandro  Ch\'{a}vez-Dom\'{i}nguez, Timur Oikhberg, Gilles Pisier, and Thomas Sinclair for very enlightening conversations about operator spaces. In particular, the author is extremely grateful to Timur Oikhberg for his help with  Proposition \ref{PropCompEmbSum} and to Gilles Pisier for  suggesting to look at the quotient $Q\colon L_\infty[0,1]\to R\cap C$ for Corollary \ref{CorKaltonMethod}.
\end{acknowledgments}


\begin{thebibliography}{BCDS21}

\bibitem[AK16]{AlbiacKaltonBook}
F.~Albiac and N.~Kalton.
\newblock {\em Topics in {B}anach space theory}, volume 233 of {\em Graduate
  Texts in Mathematics}.
\newblock Springer, [Cham], second edition, 2016.
\newblock With a foreword by Gilles Godefory.

\bibitem[BCD21]{BragaChavezDominguez2020PAMS}
B.~M. {Braga} and J.~A. Ch{\'a}vez-Dom{\'\i}nguez.
\newblock {Completely coarse maps are $\mathbb R$-linear}.
\newblock {\em Proc. Amer. Math. Soc.}, 149(3):1139--1149, 2021.

\bibitem[BCDS21]{BragaChavezDominguezSinclair}
B.~M. Braga, J.~A. Ch\'{a}vez-Dom\'{i}nguez, and T.~Sinclair.
\newblock Lipschitz geometry of operator spaces and lipschitz-free operator
  spaces.
\newblock 2021.

\bibitem[BL76]{BerghLofstromBook}
J.~Bergh and J.~L\"{o}fstr\"{o}m.
\newblock {\em Interpolation spaces. An introduction.}, volume 223 of {\em
  Grundlehren der Mathematischen Wissenschaften}.
\newblock Springer-Verlag, Berlin-New York, 1976.

\bibitem[BLM04]{BlecherLeMerdy2004}
D.~Blecher and C.~Le~Merdy.
\newblock {\em Operator algebras and their modules---an operator space
  approach}, volume~30 of {\em London Mathematical Society Monographs. New
  Series}.
\newblock The Clarendon Press, Oxford University Press, Oxford, 2004.
\newblock Oxford Science Publications.

\bibitem[BLS18]{BaudierLancienSchlumprecht2018JAMS}
F.~Baudier, G.~Lancien, and Th. Schlumprecht.
\newblock The coarse geometry of {T}sirelson's space and applications.
\newblock {\em J. Amer. Math. Soc.}, 31(3):699--717, 2018.

\bibitem[Bra18]{Braga2018JFA}
B.~M. Braga.
\newblock On weaker notions of nonlinear embeddings between {B}anach spaces.
\newblock {\em J. Funct. Anal.}, 274(11):3149--3169, 2018.

\bibitem[Kal08]{KaltonNonlinear2008}
N.~Kalton.
\newblock The nonlinear geometry of {B}anach spaces.
\newblock {\em Rev. Mat. Complut.}, 21(1):7--60, 2008.

\bibitem[Kal12]{Kalton2012MathAnnalen}
N.~Kalton.
\newblock The uniform structure of {B}anach spaces.
\newblock {\em Math. Ann.}, 354(4):1247--1288, 2012.

\bibitem[Pis03]{Pisier-OS-book}
G.~Pisier.
\newblock {\em Introduction to operator space theory}, volume 294 of {\em
  London Mathematical Society Lecture Note Series}.
\newblock Cambridge University Press, Cambridge, 2003.

\bibitem[Ros74]{Rosenthal1974PNAS}
H.~Rosenthal.
\newblock A characterization of {B}anach spaces containing {$l^{1}$}.
\newblock {\em Proc. Nat. Acad. Sci. U.S.A.}, 71:2411--2413, 1974.

\bibitem[Ros17]{Rosendal2017ForumSigma}
C.~Rosendal.
\newblock Equivariant geometry of {B}anach spaces and topological groups.
\newblock {\em Forum Math. Sigma}, 5:Paper No. e22, 62, 2017.

\bibitem[Rua89]{Ruan1989Trans}
Z.-J. Ruan.
\newblock Injectivity of operator spaces.
\newblock {\em Trans. Amer. Math. Soc.}, 315(1):89--104, 1989.

\end{thebibliography}
\end{document}